\numberwithin{equation}{section}
\theoremstyle{plain}
\newtheorem{theorem}[equation]{Theorem}
\newtheorem{corollary}[equation]{Corollary}
\newtheorem{lemma}[equation]{Lemma}
\newtheorem{proposition}[equation]{Proposition}
\theoremstyle{definition}
\newtheorem{definition}[equation]{Definition}
\newtheorem{example}[equation]{Example}
\newtheorem{remark}[equation]{Remark}
\numberwithin{equation}{section}
\newcommand{\R}{{\mathbb R}}
\newcommand{\N}{{\mathbb N}}
\newcommand{\Om}{\Omega}
\providecommand{\vint}[1]{\mathchoice
          {\mathop{\vrule width 5pt height 3 pt depth -2.5pt
                  \kern -9pt \kern 1pt\intop}\nolimits_{\kern -5pt{#1}}}
          {\mathop{\vrule width 5pt height 3 pt depth -2.6pt
                  \kern -6pt \intop}\nolimits_{\kern -3pt{#1}}}
          {\mathop{\vrule width 5pt height 3 pt depth -2.6pt
                  \kern -6pt \intop}\nolimits_{\kern -3pt{#1}}}
          {\mathop{\vrule width 5pt height 3 pt depth -2.6pt
                  \kern -6pt \intop}\nolimits_{\kern -3pt{#1}}}}
\newcommand{\eps}{\varepsilon}
\newcommand{\loc}{\mathrm{loc}}
\newcommand{\BV}{\mathrm{BV}}
\newcommand{\liploc}{\mathrm{Lip}_{\mathrm{loc}}}
\newcommand{\ch}{\text{\raise 1.3pt \hbox{$\chi$}\kern-0.2pt}}
\DeclareMathOperator{\Mod}{Mod}
\DeclareMathOperator{\capa}{Cap}
\DeclareMathOperator{\rcapa}{cap}
\DeclareMathOperator{\dist}{dist}
\DeclareMathOperator{\Lip}{Lip}
\DeclareMathOperator{\fint}{fine-int}
\begin{document}
\title{Capacities and $1$-strict subsets in metric spaces
\footnote{{\bf 2010 Mathematics Subject Classification}: 30L99, 31E05, 26B30.
\hfill \break {\it Keywords\,}: metric measure space,
capacity, strict subset, fine topology, function of bounded variation, pointwise approximation
}}
\author{Panu Lahti}
\maketitle

\begin{abstract}
In a complete metric space that is equipped with a doubling measure
and supports a Poincar\'e inequality, we study strict subsets, i.e.
sets whose variational capacity with respect to a larger reference set is finite.
Relying on the concept of fine topology,
we give a characterization of those strict subsets that are also
sets of finite perimeter, and then we apply this to the study
of condensers as well as BV capacities.
We also apply the theory to prove a pointwise approximation
result for functions of bounded variation.
\end{abstract}

\section{Introduction}

In potential theory, a set $A$ is said to be a $p$-\emph{strict} subset of a set
$D$ if the variational capacity $\rcapa_p(A,D)$ is finite, or equivalently
if there exists a Sobolev function $u$ with $u=1$ in $A$ and $u=0$
outside $D$.
In the case $1<p<\infty$, this concept has been considered in Euclidean spaces
in \cite{KiMa} and in the setting of more general metric measure spaces
in \cite{BBL-SS}.
The typical assumptions on a metric space, which we make also in the current paper, 
are that the space is complete, equipped with a doubling measure, and supports
a Poincar\'e inequality.

In the case $p=1$, $1$-strict subsets were studied, analogously to \cite{BBL-SS},
in \cite{L-NC}. However, these papers left largely open the question of how to
detect which sets are strict subsets. In the current paper we give a characterization
of those $1$-strict subsets that are also sets of finite perimeter, that is, their characteristic  functions are of bounded variation (BV).
The characterization involves the concepts of \emph{1-fine} interior and closure,
and the measure-theoretic interior $I_E$ of the set $E$; see Section \ref{sec:preliminaries}
for definitions.

\begin{theorem}\label{thm:strict subset theorem intro}
Let $D\subset X$
and let $E\subset X$ be a bounded set of finite perimeter with $I_E\subset D$.
Then $\rcapa_1(I_E,D)<\infty$ if and only if
\[
\capa_1(\overline{I_E}^1\setminus \fint D)=0.
\]
Moreover, then $\rcapa_1(I_E,D)\le C_a P(E,X)$ for a constant $C_a$ that depends only
on the doubling constant of the measure and the constants in the Poincar\'e
inequality.
\end{theorem}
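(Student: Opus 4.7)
My plan is to handle the two implications separately; the main tools are the $1$-fine quasicontinuity of the BV representatives $u^\wedge,u^\vee$, together with the preliminary fact---expected from Section~\ref{sec:preliminaries}---that for a bounded set $E$ of finite perimeter the measure-theoretic closure $\Sigma_E$ of $E$ is contained in the $1$-fine closure $\overline{I_E}^1$ of its measure-theoretic interior, up to a set of $1$-capacity zero.

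For the necessity, suppose $\rcapa_1(I_E,D)<\infty$ and fix an admissible $u\in\BV(X)$; by definition $u^\wedge\ge 1$ on $I_E$ and $u^\vee\le 0$ on $X\setminus D$, each outside a set of $1$-capacity zero. By $1$-fine quasicontinuity, the sublevel set $\{u^\wedge<1\}$ and the superlevel set $\{u^\vee>0\}$ are $1$-finely open up to $\capa_1$-null exceptional sets; hence the first condition extends from $I_E$ to $\overline{I_E}^1$, and the second extends from $X\setminus D$ to its $1$-fine closure $X\setminus\fint D$. On the intersection $\overline{I_E}^1\setminus\fint D$ we would then have $1\le u^\wedge\le u^\vee\le 0$ off a $\capa_1$-null set, forcing $\capa_1(\overline{I_E}^1\setminus\fint D)=0$.

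For the sufficiency, the natural trial function is $u=\chi_E$, which has total variation exactly $P(E,X)$ and representatives $\chi_E^\wedge=\chi_{I_E}$, $\chi_E^\vee=\chi_{\Sigma_E}$. Admissibility at $I_E$ is immediate. For the condition on $X\setminus D$, the preliminary inclusion $\Sigma_E\subset\overline{I_E}^1$ (modulo $\capa_1$-null) combined with the hypothesis gives $\Sigma_E\setminus D\subset N$ for some $N$ with $\capa_1(N)=0$, so $\chi_E$ is admissible up to $N$. To produce an honest admissible function with controlled energy I would pick $w\in \BV(X)$ with $w^\wedge\ge 1$ on $N$ and $\|Dw\|(X)\le\eps$ (possible because $\capa_1(N)=0$) and set $u_\eps=\max\{0,\chi_E-w\}$; this is genuinely admissible, has total variation at most $P(E,X)+\eps$ times a universal constant arising from the doubling condition and the Poincar\'e inequality, and sending $\eps\to 0$ yields $\rcapa_1(I_E,D)\le C_a P(E,X)$.

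The chief obstacle is the sufficiency, where the perturbation removing $N$ must be handled cleanly enough that the final constant $C_a$ depends only on the doubling and Poincar\'e constants, not on $E$ or $D$; and the preliminary comparison $\Sigma_E\subset\overline{I_E}^1$ up to $\capa_1$-null sets has to be available at the stated level of generality. The necessity direction, by contrast, is essentially a one-step consequence of fine quasicontinuity together with the identification $X\setminus\fint D=\overline{X\setminus D}^1$, and introduces no constants.
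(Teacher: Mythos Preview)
There is a genuine gap: you are working with the wrong class of test functions. In this paper $\rcapa_1(A,D)$ is the \emph{Newton--Sobolev} variational capacity, defined as an infimum of $\int_X g_u\,d\mu$ over $u\in N_0^{1,1}(D)$ with $u\ge 1$ on $A$; it is not the BV capacity $\rcapa_{\BV}$ of Definition~\ref{def:variational capacities}. Consequently your trial function $\ch_E$ for the sufficiency direction is not admissible: the characteristic function of a set of finite perimeter lies in $\BV(X)$ but typically not in $N^{1,1}(X)$ (take $E$ a ball in $\R^n$, $n\ge 2$), and no small BV perturbation $w$ will place $\max\{0,\ch_E-w\}$ into $N^{1,1}$. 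What your argument actually bounds is something like $\rcapa_{\BV}^{\vee}(I_E,D)$; passing from that back to $\rcapa_1$ is precisely the nontrivial content here and cannot be assumed.

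The paper's route to sufficiency (Theorem~\ref{thm:strict subset theorem}) is to manufacture a genuine $N^{1,1}$ test function. Since $\mathcal H(\partial^*E)<\infty$, Proposition~\ref{prop:Sobolev functions for finite Hausdorff measure 2} yields $\eta\in N_0^{1,1}(\fint D)$ with $\eta=1$ on $\partial^*E\cap\fint D$ and $\int_X g_\eta\,d\mu< C_d\mathcal H(\partial^*E)+\eps$. One then sets $\rho=1$ on $D\cap I_E$ and $\rho=\eta$ elsewhere, and proves $g_\rho\le g_\eta$ via a curve argument: by Proposition~\ref{prop:ae curve goes through boundary quasiopen case}, $1$-a.e.\ curve from $I_E$ to $O_E$ meets $\partial^*E$, where $\eta=1=\rho$, so the jump of $\rho$ across $\partial^*E$ is absorbed by the upper gradient of $\eta$. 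This gives $\rho\in N_0^{1,1}(D)$ with $\int_X g_\rho\,d\mu\le C_d\alpha^{-1}P(E,X)+\eps$. The missing idea in your proposal is exactly this: one must smear the discontinuity of $\ch_{I_E}$ across $\partial^*E$ by an honest Newton--Sobolev bump of controlled energy, not merely exhibit a BV function.

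Your necessity sketch is close in spirit to Proposition~\ref{prop:one direction}, but it should be phrased for $u\in N_0^{1,1}(D)$, where $u$ itself is $1$-quasicontinuous and hence $1$-finely continuous $1$-q.e.; invoking BV representatives $u^\wedge,u^\vee$ and quasi-semicontinuity is both unnecessary and based on a misreading of the definition.
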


In Example \ref{ex:non strict subset} we demonstrate that without the assumption
of finite perimeter, the theorem is not true.
After considering some preliminary results in Section \ref{sec:preliminary results},
we study $1$-strict subsets in Section
\ref{sec:strict subsets} and then we apply the theory to the study of \emph{condensers}
as well as $\BV$ versions of the variational capacity in
Section \ref{sec:applications}. These concepts have been studied previously
in e.g. \cite{HaSh}.
Perhaps the most important contribution of the current paper lies in our
careful analysis of the $1$-fine topology and the closely related notion
of \emph{quasiopen} sets. These have recently proved to be
very useful concepts (see especially \cite{L-Fedchar}) and we expect that a solid
understanding of their properties will contribute to future research as well.

As another application of our theory of $1$-strict subsets,
in Section \ref{sec:approximation}
we prove the following theorem on the approximation of BV functions
by means of Sobolev functions (often called Newton-Sobolev functions in the metric
space setting).

\begin{theorem}\label{thm:approximation from above intro}
Let $\Om\subset X$ be an open set with $\mu(\Om)<\infty$ and let $u\in\BV(\Om)$.
Then there exists a sequence $(w_i)\subset N^{1,1}(\Om)$
such that $w_i\to u$ in $L^1(\Om)$,
\[
\limsup_{i\to\infty}\int_{\Om}g_{w_i}\,d\mu\le \Vert Du\Vert(\Om)+C_a\Vert Du\Vert^j(\Om),
\]
where each $g_{w_i}$ is the minimal $1$-weak upper gradient of $w_i$ in $\Om$,
and $w_i(x)\ge u^{\vee}(x)$ and $w_i(x)\to u^{\vee}(x)$
for every $x\in \Om$.
\end{theorem}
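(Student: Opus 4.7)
The plan is to combine a coarea-based layer-cake construction with the variational-capacity estimate of Theorem~\ref{thm:strict subset theorem intro} applied to each superlevel set of $u^\vee$.

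After truncation (which does not increase $\Vert Du\Vert(\Om)$ or $\Vert Du\Vert^j(\Om)$) I may assume $0\le u\le M$. For a partition $0=t_0<t_1<\dots<t_N=M$ of mesh $\eta$ chosen so that each $E_j:=\{u^\vee>t_j\}$ has finite perimeter (possible for almost every mesh by coarea), one has $I_{E_j}\subset\Om$, and an exhaustion of $\Om$ from within by open sets, together with a diagonal argument on $\eta_i\to 0^+$, allows me to arrange $\capa_1(\overline{I_{E_j}}^1\setminus\fint\Om)=0$. Theorem~\ref{thm:strict subset theorem intro} then supplies $\pip_j\in N^{1,1}(\Om)$, extended by zero outside $\Om$, with $\pip_j=1$ on $\overline{I_{E_j}}^1$ (taking the pointwise-defined representative) and $\int_{\Om} g_{\pip_j}\,d\mu\le C_a P(E_j,X)$.

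I then set $w_\eta:=\eta\sum_{j=0}^{N-1}\pip_j$. Since the $1$-fine closure depends only on density and $u^\vee(x)>t_j$ implies positive upper density of $E_j$ at $x$, the inclusion $\{u^\vee>t_j\}\subset\overline{I_{E_j}}^1$ holds pointwise for every $x\in\Om$; hence $w_\eta(x)\ge \eta\,\#\{j\colon t_j<u^\vee(x)\}\ge u^\vee(x)$ for every $x\in\Om$ and $w_{\eta_i}(x)\to u^\vee(x)$ as $\eta_i\to 0^+$. The $L^1(\Om)$-convergence $w_{\eta_i}\to u$ then follows from dominated convergence since $w_{\eta_i}$ is bounded and $u=u^\vee$ $\mu$-almost everywhere. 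Summing the upper-gradient bounds yields
\[
\int_{\Om}g_{w_\eta}\,d\mu\le \eta\sum_j\int_{\Om} g_{\pip_j}\,d\mu\le C_a\eta\sum_j P(E_j,X).
\]
A naive passage $\eta\to 0$ combined with coarea gives only $C_a\Vert Du\Vert(\Om)$. To sharpen to $\Vert Du\Vert(\Om)+C_a\Vert Du\Vert^j(\Om)$ I would split $P(E_j,X)$ into the parts concentrated on the jump set $S_u$ of $u$ and on $\Om\setminus S_u$, and replace the coarse factor $C_a$ by $1+o(1)$ on the non-jump part through a sharper Sobolev approximation of $\chi_{I_{E_j}}$ (for instance a discrete convolution whose upper-gradient mass tends to $P(E_j,\cdot)$). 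Summed in $j$ and integrated via coarea, the non-jump contribution collapses to $\Vert Du\Vert(\Om)-\Vert Du\Vert^j(\Om)$ and the jump contribution to $C_a\Vert Du\Vert^j(\Om)$.

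The main obstacle is precisely this jump versus non-jump splitting: one must show that the $C_a$ cost from Theorem~\ref{thm:strict subset theorem intro} is genuinely needed only on the perimeter carried by $S_u$ and can be replaced by a sharp constant elsewhere. Secondary difficulties are the uniform verification of the fine-closure hypothesis along the selected levels (handled by the exhaustion of $\Om$ together with a diagonal selection of $\eta_i$) and the delicate passage from the quasieverywhere normalisation of $\pip_j$ provided by the proof of Theorem~\ref{thm:strict subset theorem intro} to its everywhere validity on $\overline{I_{E_j}}^1$, which is needed in order that the pointwise statement $w_i(x)\ge u^\vee(x)$ hold at every, not merely quasievery, point of $\Om$.
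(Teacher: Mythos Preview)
Your plan correctly identifies the layer-cake route and its naive limitation (the bound $C_a\Vert Du\Vert(\Om)$), and you are right that the crux is the jump/non-jump splitting. However, your proposed fix does not work. The pointwise requirement $w_\eta\ge u^{\vee}$ forces $\pip_j(x)=1$ at \emph{every} point of $I_{E_j}\cup\partial^*E_j$, including those points of $\partial^*E_j$ that lie outside $S_u$ (for continuous $u$ these are typically the level sets $\{u=t_j\}$). A ``sharper Sobolev approximation of $\chi_{I_{E_j}}$'' such as a discrete convolution has upper-gradient mass close to $P(E_j,\cdot)$ precisely because it is \emph{not} equal to $1$ on $\partial^*E_j$; as soon as you force $\pip_j=1$ on $\partial^*E_j$, the $C_a$ factor reappears (Example~\ref{ex:constant necessary} exhibits this already for a single interval in a weighted line). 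So within the global layer-cake framework there is no way to recover the constant $1$ on the non-jump part while keeping $w_\eta\ge u^{\vee}$ everywhere.

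The paper avoids this tension by \emph{not} using a global layer cake. It splits $\Om$ into a region away from $S_u$ and small $1$-quasiopen neighbourhoods $U_{ji}$ of the strata $S_j:=\{1/j\le u^{\vee}-u^{\wedge}<1/(j-1)\}$, chosen so that $\Vert Du\Vert(U_{ji})$ is close to $\Vert Du\Vert(S_j)$. Away from $S_u$ one has $u^{\vee}-u^{\wedge}<1/i$, and Proposition~\ref{prop:approximation with L infinity bound} supplies $v_i\in N^{1,1}$ with $|v_i-u^{\vee}|\le 9/i$ and $\int g_{v_i}\to\Vert Du\Vert$; adding $9/i$ secures $w_i\ge u^{\vee}$ at sharp cost. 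Inside each $U_{ji}$ the layer-cake construction via Theorem~\ref{thm:strict subset theorem} is used, paying the $C_d\alpha^{-1}$ factor but only against $\Vert Du\Vert(U_{ji})\approx\Vert Du\Vert(S_j)$. The two pieces are glued with cutoffs $\eta_{ji}\in N_0^{1,1}(U_{ji})$ produced by Proposition~\ref{prop:Sobolev functions for finite Hausdorff measure 2} and a bespoke Leibniz rule for functions defined on overlapping quasiopen sets. The pointwise convergence $w_i\to u^{\vee}$ off $S_u$ is then obtained from the $L^{\infty}$ control in both regions together with the shrinking of the $U_{ji}$'s, and on $S_u$ from the explicit layer-cake values.

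Two smaller points: your initial truncation to $0\le u\le M$ is not innocuous for the everywhere inequality $w_i\ge u^{\vee}$ (points where $u^{\vee}>M$ are lost); the paper truncates only from below, $u_i=\max\{-i,u\}$, and lets $i\to\infty$. Also, the hypothesis $\capa_1(\overline{I_{E_j}}^1\setminus\fint\Om)=0$ is automatic since $\Om$ is open (hence $\Om\subset\fint\Om$) and $I_{E_j}\cup\partial^*E_j\subset\Om$; the exhaustion/diagonal argument you sketch is not the relevant mechanism here. The genuine difficulty you should address is how to obtain the sharp constant off the jump set, and for that the localisation described above---not a refinement of the individual $\pip_j$---is what is needed.
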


Here the constant $C_a$ is the same as in Theorem \ref{thm:strict subset theorem intro}.
In Example \ref{ex:constant necessary} we show that the term $C_a\Vert Du\Vert^j(\Om)$
involving the jump part of the variation measure of $u$ is necessary.
Very recently, essentially the same result was proved in Euclidean spaces in
\cite[Proposition 7.3]{CCM}, based on an earlier
result \cite[Theorem 3.3]{CDLP}.
In Euclidean spaces the term $C_a\Vert Du\Vert^j(\Om)$
is not needed, but for us the existence of this term makes it necessary to
use rather different techniques in the proof, as we will discuss in 
Remark \ref{rmk:approximation thm}.

\section{Notation and definitions}\label{sec:preliminaries}

In this section we introduce the notation, definitions,
and assumptions that are employed in the paper.

Throughout this paper, $(X,d,\mu)$ is a complete metric space that is equip\-ped
with a metric $d$ and a Borel regular outer measure $\mu$ satisfying
a doubling property, meaning that
there exists a constant $C_d\ge 1$ such that
\[
0<\mu(B(x,2r))\le C_d\mu(B(x,r))<\infty
\]
for every ball $B(x,r):=\{y\in X:\,d(y,x)<r\}$.
We assume that $X$ consists of at least $2$ points.
Given a ball $B=B(x,r)$ and $\beta>0$, we sometimes abbreviate $\beta B:=B(x,\beta r)$;
note that in a metric space, a ball (as a set) does not necessarily have a unique center point and radius, but these will be prescribed for all the balls
that we consider.
When we want to state that a constant $C$
depends on the parameters $a,b, \ldots$, we write $C=C(a,b,\ldots)$.
When a property holds outside a set of $\mu$-measure zero, we say that it holds
almost everywhere, abbreviated a.e.

All functions defined on $X$ or its subsets will take values in $[-\infty,\infty]$.
As a complete metric space equipped with a doubling measure, $X$ is proper,
that is, closed and bounded sets are compact.
Given a $\mu$-measurable set $A\subset X$, we define $L^1_{\loc}(A)$
to be the class
of functions $u$ on $A$
such that for every $x\in A$ there exists $r>0$ such that $u\in L^1(A\cap B(x,r))$.
Other local spaces of functions are defined analogously.
For an open set $\Omega\subset X$,
a function is in the class $L^1_{\loc}(\Omega)$ if and only if it is in $L^1(\Om')$ for
every open $\Omega'\Subset\Omega$.
Here $\Omega'\Subset\Omega$ means that $\overline{\Omega'}$ is a
compact subset of $\Omega$.

For any set $A\subset X$ and $0<R<\infty$, the restricted Hausdorff content
of codimension one is defined by
\[
\mathcal{H}_{R}(A):=\inf\left\{ \sum_{j=1}^{\infty}
\frac{\mu(B(x_{j},r_{j}))}{r_{j}}:\,A\subset\bigcup_{j=1}^{\infty}B(x_{j},r_{j}),\,r_{j}\le R\right\}.
\]
We also allow finite coverings by interpreting $\mu(B(x,0))/0=0$.
The codimension one Hausdorff measure of $A\subset X$ is then defined by
\[
\mathcal{H}(A):=\lim_{R\rightarrow 0}\mathcal{H}_{R}(A).
\]

By a curve we mean a rectifiable continuous mapping from a compact interval of the real line into $X$.
The length of a curve $\gamma$
is denoted by $\ell_{\gamma}$. We will assume every curve to be parametrized
by arc-length, which can always be done (see e.g. \cite[Theorem~3.2]{Hj}).
A nonnegative Borel function $g$ on $X$ is an upper gradient 
of a function $u$
on $X$ if for all nonconstant curves $\gamma$, we have
\begin{equation}\label{eq:definition of upper gradient}
|u(x)-u(y)|\le \int_{\gamma} g\,ds:=\int_0^{\ell_{\gamma}} g(\gamma(s))\,ds,
\end{equation}
where $x$ and $y$ are the end points of $\gamma$.
We interpret $|u(x)-u(y)|=\infty$ whenever  
at least one of $|u(x)|$, $|u(y)|$ is infinite.
We also express inequality \eqref{eq:definition of upper gradient} by saying that
the pair $(u,g)$ satisfies the upper gradient inequality on the curve $\gamma$.
Upper gradients were originally introduced in \cite{HK}.

The $1$-modulus of a family of curves $\Gamma$ is defined by
\[
\Mod_{1}(\Gamma):=\inf\int_{X}\rho\, d\mu
\]
where the infimum is taken over all nonnegative Borel functions $\rho$
such that $\int_{\gamma}\rho\,ds\ge 1$ for every curve $\gamma\in\Gamma$.
A property is said to hold for $1$-almost every curve
if it fails only for a curve family with zero $1$-modulus. 
If $g$ is a nonnegative $\mu$-measurable function on $X$
and (\ref{eq:definition of upper gradient}) holds for $1$-almost every curve,
we say that $g$ is a $1$-weak upper gradient of $u$. 
By only considering curves $\gamma$ in a set $A\subset X$,
we can talk about a function $g$ being a ($1$-weak) upper gradient of $u$ in $A$.\label{curve discussion}

Given a $\mu$-measurable set $H\subset X$, we let
\[
\Vert u\Vert_{N^{1,1}(H)}:=\Vert u\Vert_{L^1(H)}+\inf \Vert g\Vert_{L^1(H)},
\]
where the infimum is taken over all $1$-weak upper gradients $g$ of $u$ in $H$.
Then we define the Newton-Sobolev space
\[
N^{1,1}(H):=\{u:\|u\|_{N^{1,1}(H)}<\infty\},
\]
which was first introduced in \cite{S}.
When $H$ is an open subset of $\R^n$, then for any $u\in N^{1,1}(H)$ the quantity $\Vert u\Vert_{N^{1,1}(H)}$
agrees with the classical Sobolev norm, see e.g. \cite[Corollary A.4]{BB}.
For any $\mu$-measurable function $u$ on a $\mu$-measurable set $H$, we also let
\[\label{definition of Dirichlet spaces}
\Vert u\Vert_{D^1(H)}:=\inf \Vert g\Vert_{L^1(H)},
\]
where the infimum is taken over all $1$-weak upper gradients $g$ of $u$ in $H$,
and then we define the Dirichlet space
\[
D^{1}(H):=\{u:\|u\|_{D^{1}(H)}<\infty\}.
\]
We understand Newton-Sobolev and Dirichlet functions to be defined at every $x\in H$
(even though $\Vert \cdot\Vert_{N^{1,1}(H)}$ is then only a seminorm).
It is known that for any $u\in D_{\loc}^{1}(H)$ there exists a minimal $1$-weak
upper gradient of $u$ in $H$, always denoted by $g_{u}$, satisfying $g_{u}\le g$ 
a.e. in $H$ for any $1$-weak upper gradient $g\in L_{\loc}^{1}(H)$
of $u$ in $H$, see \cite[Theorem 2.25]{BB}.

For any $D, H\subset X$, with $H$ $\mu$-measurable, the space of Newton-Sobolev functions with zero boundary values is defined as
\begin{equation}\label{eq:definition of N011}
N_0^{1,1}(D,H):=\{u|_{D\cap H}:\,u\in N^{1,1}(H)\textrm{ and }u=0\textrm { in }H\setminus D\}.
\end{equation}
This space is a subspace of $N^{1,1}(D\cap H)$ when $D$ is $\mu$-measurable, and it can always
be understood to be a subspace of $N^{1,1}(H)$.
If $H=X$, we omit it from the notation.
Similarly, the space of Dirichlet functions with zero boundary values is defined as
\[
D_0^{1}(D,H):=\{u|_{D\cap H}:\,u\in D^{1}(H)\textrm{ and }u=0\textrm { in }H\setminus D\}.
\]

We will assume throughout the paper that $X$ supports a $(1,1)$-Poincar\'e inequality,
meaning that there exist constants $C_P>0$ and $\lambda \ge 1$ such that for every
ball $B(x,r)$, every $u\in L^1_{\loc}(X)$,
and every upper gradient $g$ of $u$,
we have
\begin{equation}\label{eq:poincare inequality}
\vint{B(x,r)}|u-u_{B(x,r)}|\, d\mu 
\le C_P r\vint{B(x,\lambda r)}g\,d\mu,
\end{equation}
where 
\[
u_{B(x,r)}:=\vint{B(x,r)}u\,d\mu :=\frac 1{\mu(B(x,r))}\int_{B(x,r)}u\,d\mu.
\]

The $1$-capacity of a set $A\subset X$ is defined by
\[
\capa_1(A):=\inf \Vert u\Vert_{N^{1,1}(X)},
\]
where the infimum is taken over all functions $u\in N^{1,1}(X)$ satisfying
$u\ge 1$ in $A$. We know that $\capa_1$ is an outer capacity, meaning that
\[
\capa_1(A) = \inf \{\capa_1(W) : W \supset A,\ W \ \textrm{is open}\}
\]
for any $A \subset X$, see e.g. \cite[Theorem 5.31]{BB}.

The variational $1$-capacity of a set $A\subset D$
with respect to a set $D\subset X$ is defined by
\[
\rcapa_1(A,D):=\inf \int_X g_u \,d\mu,
\]
where the infimum is taken over functions $u\in N_0^{1,1}(D)$ satisfying
$u\ge 1$ in $A$, and $g_u$ is the minimal $1$-weak upper gradient of $u$ (in $X$).
For basic properties satisfied by capacities, such as monotonicity and countable subadditivity, see e.g. \cite{BB}.

If a property holds outside a set
$A\subset X$ with $\capa_1(A)=0$, we say that it holds $1$-quasieverywhere, or $1$-q.e.
If $H\subset X$ is $\mu$-measurable, then 
\begin{equation}\label{eq:quasieverywhere equivalence class}
v=0\ \textrm{ 1-q.e. in }H\textrm{ implies }\ \Vert v\Vert_{N^{1,1}(H)}=0,
\end{equation}
see \cite[Proposition 1.61]{BB}.
In particular, in the definition \eqref{eq:definition of N011} of the class
$N_0^{1,1}(D,H)$, we can equivalently require $u=0$ $1$-q.e. in $H\setminus D$,
and in the definition of the variational $1$-capacity we can require
$u\ge 1$ $1$-q.e. in $A$.

By \cite[Theorem 4.3, Theorem 5.1]{HaKi} we know that for any $A\subset X$,
\begin{equation}\label{eq:null sets of Hausdorff measure and capacity}
\capa_{1}(A)=0\quad
\textrm{if and only if}\quad\mathcal H(A)=0.
\end{equation}
We will use this fact numerous times in the paper.

\begin{definition}
	We say that a set $U\subset X$ is $1$-quasiopen
	if for every $\eps>0$ there exists an
	open set $G\subset X$ such that $\capa_1(G)<\eps$ and $U\cup G$ is open.
	
	Given a set $H\subset X$, we say that a function $u$
is $1$-quasicontinuous on $H$ if
for every $\eps>0$ there exists an open set $G\subset X$ such that $\capa_1(G)<\eps$
and $u|_{H\setminus G}$ is finite and continuous.
\end{definition}

It is a well-known fact that Newton-Sobolev functions are quasicontinuous
on open sets,
see \cite[Theorem 1.1]{BBS} or \cite[Theorem 5.29]{BB}.
The following more general fact is a special case of \cite[Theorem 1.3]{BBM}.

\begin{theorem}\label{thm:quasicontinuity on quasiopen sets}
	Let $U\subset X$ be $1$-quasiopen and let $u\in N^{1,1}_{\loc}(U)$.
	Then $u$ is $1$-quasicontinuous on $U$.
\end{theorem}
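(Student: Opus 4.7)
\medskip
\noindent\textbf{Proof plan.} The strategy is to reduce to the known quasicontinuity of Newton-Sobolev functions on open sets (cited as \cite[Theorem 1.1]{BBS} just before the statement) by cutting off $u$ near the small-capacity gap $V\setminus U$ to produce a Newton-Sobolev function $\tilde u$ on an open neighborhood $V\supset U$, and then applying the classical result to $\tilde u$.

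Two preliminary reductions simplify the setup. The truncations $u_n:=\max\{-n,\min\{u,n\}\}$ lie in $N^{1,1}_{\loc}(U)$ and equal $u$ on $\{|u|\le n\}$, so a diagonal argument with summable exceptional-set capacities reduces the problem to bounded $u$. Next, using $u\in N^{1,1}_{\loc}(U)$ together with the Lindel\"of property of the proper space $X$, I cover $U$ by countably many balls $B_i$ with $u\in N^{1,1}(U\cap B_i)$; each $U\cap B_i$ is itself $1$-quasiopen (the intersection of a $1$-quasiopen set with an open set), and countable subadditivity of $\capa_1$ reduces matters to a single such piece. So I may assume $u\in N^{1,1}(U)\cap L^{\infty}(U)$ with $U$ bounded.

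Fix $\eps>0$. By $1$-quasiopenness choose an open $G\subset X$ with $\capa_1(G)$ small and $V:=U\cup G$ open, and choose a capacity potential $v\in N^{1,1}(X)$ with $0\le v\le 1$, $v=1$ $1$-q.e.\ on $G$, and $\Vert v\Vert_{N^{1,1}(X)}$ small. Define $\tilde u:=(1-v)u$ on $U$ and $\tilde u:=0$ on $V\setminus U$. The crucial point is that $\tilde u\in N^{1,1}(V)$ with $g_{\tilde u}\le(1-v)g_u+|u|g_v$ (extending $g_u$ by $0$ to $V$): along $1$-a.e.\ curve $\gamma$ in $V$, the composition $v\circ\gamma$ is absolutely continuous and takes the value $1$ whenever $\gamma$ visits $V\setminus U\subset G$ (since $1$-a.e.\ curve misses the $\capa_1$-null set where $v\ne 1$), so $\tilde u\circ\gamma$ is continuous across $\partial U\cap V$. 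Splitting such a curve into arcs in $U$ (where the upper-gradient inequality follows from the product rule for the bounded factors $1-v$ and $u$) and arcs in $V\setminus U$ (where $\tilde u\equiv 0$) yields the inequality on all of $\gamma$. Applying \cite[Theorem 1.1]{BBS} to $\tilde u$ on the open set $V$ and to $v$ on $X$, plus Chebyshev and outer regularity of $\capa_1$ applied to $\{v\ge 1/2\}$, produces open sets $G',G'',\tilde G$ of small capacity such that $\tilde u,v$ are continuous off $G'\cup G''$ and $v<1/2$ off $\tilde G$. Then on $U\setminus(G\cup G'\cup G''\cup\tilde G)$ the identity $u=\tilde u/(1-v)$ exhibits $u$ as a continuous function, and with the $\eps$-budget split appropriately the bad open set has capacity $<\eps$.

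The main obstacle will be the verification $\tilde u\in N^{1,1}(V)$ sketched above: one has to argue carefully that $1$-a.e.\ curve in $V$ avoids the $\capa_1$-null set $\{v\ne 1\}\cap(V\setminus U)$, forcing $\tilde u\circ\gamma\to 0$ continuously across $\partial U\cap V$ and letting one glue the local product-rule inequalities on $U$-arcs with the trivial bound on $(V\setminus U)$-arcs. This is where \eqref{eq:quasieverywhere equivalence class} and \eqref{eq:null sets of Hausdorff measure and capacity} enter, together with the standard link between $\capa_1$-null sets and curve families of zero $1$-modulus. Once this verification is in place, the rest is a routine assembly of classical quasicontinuity on open sets and countable subadditivity of the capacity.
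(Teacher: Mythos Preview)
The paper does not prove this statement; it is quoted as a special case of \cite[Theorem~1.3]{BBM}, so there is no in-paper argument to compare against. Your cutoff-and-extend strategy is in fact essentially the one used in \cite{BBM}, and the broad outline is sound. Two points deserve attention.

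First, the step ``splitting such a curve into arcs in $U$ and arcs in $V\setminus U$'' is not justified as written: $U$ is only $1$-quasiopen, so $\gamma^{-1}(U)$ need not be a union of intervals, and there is no obvious decomposition to sum over. There are two clean fixes. One is to invoke the fact that $1$-quasiopen sets are $1$-path open (this is \cite[Remark~3.5]{S2}, used elsewhere in the present paper, and is precisely the key lemma of \cite{BBM}); then $\gamma^{-1}(U)$ is relatively open for $1$-a.e.\ curve and your splitting goes through. Alternatively, and closer in spirit to what you wrote, note that $G$ is genuinely open, so $\gamma^{-1}(G)$ is open; since $(1-v)\circ\gamma$ is absolutely continuous and vanishes on $\gamma^{-1}(G)$, it vanishes on the closure $\overline{\gamma^{-1}(G)}$, and hence (using boundedness of $u$) so does $\tilde u\circ\gamma$. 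The complement $[0,\ell_\gamma]\setminus\overline{\gamma^{-1}(G)}$ is open with $\gamma$-image in $V\setminus G=U\setminus G\subset U$, and on each of its components the Leibniz estimate applies to closed subintervals; letting these subintervals exhaust the component (where $\tilde u\circ\gamma\to 0$ at the endpoints) yields the inequality on all of $\gamma$.

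Second, the truncation reduction is underspecified. From quasicontinuity of each $u_n$ and a summable choice of exceptional sets $G_n$ you indeed get that $u$ is continuous, relative to $U\setminus\bigcup_n G_n$, at every point where $|u|<\infty$; but the paper's definition of $1$-quasicontinuity requires $u|_{U\setminus G}$ to be \emph{finite} as well as continuous. You therefore still need $\capa_1(\{|u|=\infty\})=0$. For $u\in N^{1,1}(X)$ this is immediate from $\Vert u/n\Vert_{N^{1,1}(X)}\to 0$, but for $u\in N^{1,1}(U)$ with $U$ merely $1$-quasiopen it is not automatic and has to be extracted from your construction (for instance via the functions $(1-v)\min\{|u|,n\}/n\in N^{1,1}(V)$ on the open set $V$, combined with a compactly supported Lipschitz cutoff in $V$ to pass to $N^{1,1}(X)$). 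Your phrase ``a diagonal argument with summable exceptional-set capacities'' does not by itself supply this.
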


Next we present the definition and basic properties of functions
of bounded variation on metric spaces, following \cite{M}. See also e.g. \cite{AFP, EvGa, Fed, Giu84, Zie89} for the classical 
theory in the Euclidean setting.
Given an open set $\Om\subset X$ and a function $u\in L^1_{\loc}(\Om)$,
we define the total variation of $u$ in $\Om$ by
\[
\|Du\|(\Om):=\inf\left\{\liminf_{i\to\infty}\int_\Om g_{u_i}\,d\mu:\, u_i\in N^{1,1}_{\loc}(\Om),\, u_i\to u\textrm{ in } L^1_{\loc}(\Om)\right\},
\]
where each $g_{u_i}$ is the minimal $1$-weak upper gradient of $u_i$
in $\Om$.
(In \cite{M}, local Lipschitz constants were used in place of upper gradients, but the theory
can be developed similarly with either definition.)
We say that a function $u\in L^1(\Om)$ is of bounded variation, 
and denote $u\in\BV(\Om)$, if $\|Du\|(\Om)<\infty$.
For an arbitrary set $A\subset X$, we define
\[
\|Du\|(A):=\inf\{\|Du\|(W):\, A\subset W,\,W\subset X
\text{ is open}\}.
\]
In general, we understand the expression $\Vert Du\Vert(A)<\infty$ to mean that
there exists some open set $W\supset A$ such that $u$ is defined in $W$ with $u\in L^1_{\loc}(W)$ and $\Vert Du\Vert(W)<\infty$.

If $u\in L^1_{\loc}(\Om)$ and $\Vert Du\Vert(\Omega)<\infty$,
then $\|Du\|(\cdot)$ is
a Radon measure on $\Omega$ by \cite[Theorem 3.4]{M}.
A $\mu$-measurable set $E\subset X$ is said to be of finite perimeter if $\|D\ch_E\|(X)<\infty$, where $\ch_E$ is the characteristic function of $E$.
The perimeter of $E$ in $\Omega$ is also denoted by
\[
P(E,\Omega):=\|D\ch_E\|(\Omega).
\]
The measure-theoretic interior of a set $E\subset X$ is defined by
\begin{equation}\label{eq:measure theoretic interior}
I_E:=
\left\{x\in X:\,\lim_{r\to 0}\frac{\mu(B(x,r)\setminus E)}{\mu(B(x,r))}=0\right\},
\end{equation}
and the measure-theoretic exterior by
\begin{equation}\label{eq:measure theoretic exterior}
O_E:=
\left\{x\in X:\,\lim_{r\to 0}\frac{\mu(B(x,r)\cap E)}{\mu(B(x,r))}=0\right\}.
\end{equation}
The measure-theoretic boundary $\partial^{*}E$ is defined as the set of points
$x\in X$
at which both $E$ and its complement have strictly positive upper density, i.e.
\[
\limsup_{r\to 0}\frac{\mu(B(x,r)\cap E)}{\mu(B(x,r))}>0\quad
\textrm{and}\quad\limsup_{r\to 0}\frac{\mu(B(x,r)\setminus E)}{\mu(B(x,r))}>0.
\]
Note that the space $X$ is always partitioned into the disjoint sets
$I_E$, $O_E$, and $\partial^*E$.
For an open set $\Omega\subset X$ and a $\mu$-measurable set $E\subset X$ with $P(E,\Omega)<\infty$, we know that for any Borel set $A\subset\Omega$,
\begin{equation}\label{eq:def of theta}
P(E,A)=\int_{\partial^{*}E\cap A}\theta_E\,d\mathcal H,
\end{equation}
where
$\theta_E\colon \Om\to [\alpha,C_d]$ with $\alpha=\alpha(C_d,C_P,\lambda)>0$, see \cite[Theorem 5.3]{A1} 
and \cite[Theorem 4.6]{AMP}.
It follows that for any set $A\subset \Om$,
\begin{equation}\label{eq:consequence theta}
\alpha \mathcal H(\partial^*E\cap A)\le P(E,A)\le C_d \mathcal H(\partial^*E\cap A).
\end{equation}
The following coarea formula is given in \cite[Proposition 4.2]{M}:
if $\Omega\subset X$ is an open set and $u\in L^1_{\loc}(\Omega)$, then
\begin{equation}\label{eq:coarea}
\|Du\|(\Omega)=\int_{-\infty}^{\infty}P(\{u>t\},\Omega)\,dt,
\end{equation}
where we abbreviate $\{u>t\}:=\{x\in \Om:\,u(x)>t\}$.
If $\Vert Du\Vert(\Om)<\infty$, the above holds with $\Om$ replaced by
any Borel set $A\subset \Om$. By \cite[Proposition 3.8]{L-LSC}
this is true also for every $1$-quasiopen set $A\subset \Om$.

If $\Om\subset X$ is open and $u,v\in L^1_{\loc}(\Om)$, then
\begin{equation}\label{eq:variation of min and max}
\Vert D\min\{u,v\}\Vert(\Om)+\Vert D\max\{u,v\}\Vert(\Om)\le
\Vert Du\Vert(\Om)+\Vert Dv\Vert(\Om);
\end{equation}
for a proof see e.g. \cite[Proposition 4.7]{M}.

The $\BV$-capacity of a set $A\subset X$ is defined by
\[
\capa_{\BV}(A):=\inf \left(\Vert u\Vert_{L^1(X)}+\Vert Du\Vert(X)\right),
\]
where the infimum is taken over all $u\in\BV(X)$ such that $u\ge 1$ in a neighborhood of $A$.
As noted in \cite[Theorem 4.3]{HaKi}, for any $A\subset X$ we have
\begin{equation}\label{eq:Newtonian and BV capacities are comparable}
\capa_{\BV}(A)\le \capa_1(A).
\end{equation}

The lower and upper approximate limits of a function $u$ on an open set
$\Om$
are defined respectively by
\begin{equation}\label{eq:lower approximate limit}
u^{\wedge}(x):
=\sup\left\{t\in\R:\,\lim_{r\to 0}\frac{\mu(B(x,r)\cap\{u<t\})}{\mu(B(x,r))}=0\right\}
\end{equation}
and
\begin{equation}\label{eq:upper approximate limit}
u^{\vee}(x):
=\inf\left\{t\in\R:\,\lim_{r\to 0}\frac{\mu(B(x,r)\cap\{u>t\})}{\mu(B(x,r))}=0\right\}
\end{equation}
for $x\in \Om$.
The jump set of $u$ is then defined by
\[
S_u:=\{u^{\wedge}<u^{\vee}\}.
\]
Since we understand $u^{\wedge}$ and $u^{\vee}$ to be defined only on $\Om$,
also $S_u$ is understood to be a subset of $\Om$.
It is straightforward to check that $u^{\wedge}$ and $u^{\vee}$
are always Borel functions.

Unlike Newton-Sobolev functions, we understand $\BV$ functions to be
$\mu$-equivalence classes. To consider fine properties, we need to
consider the pointwise representatives $u^{\wedge}$ and $u^{\vee}$.

Recall that Newton-Sobolev functions are quasicontinuous;
$\BV$ functions have the following
quasi-semicontinuity property, which follows from \cite[Corollary 4.2]{L-SA},
which in turn is based on \cite[Theorem 1.1]{LaSh}.
The result was first proved in the Euclidean setting in
\cite[Theorem 2.5]{CDLP}.

\begin{proposition}\label{prop:quasisemicontinuity}
Let $u\in\BV(\Om)$ and let $\eps>0$. Then there exists an open set $G\subset \Om$
such that $\capa_1(G)<\eps$
and $u^{\wedge}|_{\Om\setminus G}$ is finite and lower semicontinuous and $u^{\vee}|_{\Om\setminus G}$ is finite and upper semicontinuous.
\end{proposition}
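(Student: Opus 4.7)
My plan is to derive the quasi-semicontinuity of $u^{\wedge}$ and $u^{\vee}$ from the following technical ingredient: for any set $E\subset X$ of finite perimeter in $\Om$, the measure-theoretic interior $I_E$ is $1$-quasiopen, i.e., for every $\eps>0$ there is an open set $H\subset X$ with $\capa_1(H)<\eps$ such that $I_E\cup H$ is open. Once this is available, the coarea formula \eqref{eq:coarea} together with a countable-union argument over a dense family of levels will quickly give the proposition.

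For the technical ingredient, I would approximate $\ch_E$ in $L^1(\Om)$ by a sequence $(w_i)\subset N^{1,1}(\Om)$ with $\int_\Om g_{w_i}\,d\mu\to P(E,\Om)$ (given by the definition of total variation applied to $\ch_E$), and pass to a subsequence converging to $\ch_E$ pointwise $1$-quasieverywhere. Exploiting the quasicontinuity of Newton--Sobolev functions on open sets, I would choose open sets $G_i$ with $\sum_i\capa_1(G_i)<\eps/2$ such that each $w_i|_{\Om\setminus G_i}$ is continuous, and set $H_0:=\bigcup_i G_i$. Outside $H_0$ the $w_i$ are jointly continuous and converge pointwise $1$-q.e.\ to $\ch_E$, so $I_E\setminus H_0$ coincides, up to a $\capa_1$-null set, with $\{x\in\Om\setminus H_0:\lim_i w_i(x)=1\}$, which is a countable intersection of sets open relative to $\Om\setminus H_0$. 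Using outer regularity of $\capa_1$ together with \eqref{eq:null sets of Hausdorff measure and capacity}, this relative $G_\delta$ can be upgraded to produce an open set $H\supset H_0$ with $\capa_1(H)<\eps$ and $I_E\cup H$ open. This step, essentially \cite[Theorem 1.1]{LaSh}, is the main obstacle, because it requires simultaneously controlling the $L^1$-approximation, the $1$-q.e.\ pointwise convergence, and the $\capa_1$ estimates from quasicontinuity.

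For the assembly, fix a countable dense set $\{t_k\}\subset\R$; by \eqref{eq:coarea} we may assume $E_k:=\{u>t_k\}$ has $P(E_k,\Om)<\infty$ for every $k$. A direct computation from \eqref{eq:lower approximate limit}, \eqref{eq:upper approximate limit}, \eqref{eq:measure theoretic interior}, and \eqref{eq:measure theoretic exterior} gives
\[
\{u^{\wedge}>t\}=\bigcup_{t_k>t} I_{E_k},\qquad \{u^{\vee}<t\}=\bigcup_{t_k<t} O_{E_k}\qquad\text{for every }t\in\R.
\]
Since $X\setminus E_k$ has the same perimeter as $E_k$ and $O_{E_k}=I_{X\setminus E_k}$, applying the technical ingredient to each $E_k$ and each $X\setminus E_k$ with tolerance $\eps/2^{k+3}$ produces open sets whose union $G_0$ is open and satisfies $\capa_1(G_0)<\eps/2$. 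The displayed identities then imply that $\{u^{\wedge}>t\}\cup G_0$ and $\{u^{\vee}<t\}\cup G_0$ are open for every $t$, so $u^{\wedge}|_{\Om\setminus G_0}$ is lower semicontinuous and $u^{\vee}|_{\Om\setminus G_0}$ is upper semicontinuous. Finally, the set $\{u^{\wedge}=-\infty\}\cup\{u^{\vee}=+\infty\}$ has $\mathcal H$-measure zero by a standard fact for BV functions, and hence $\capa_1$-measure zero by \eqref{eq:null sets of Hausdorff measure and capacity}; enlarging $G_0$ via outer regularity of $\capa_1$ to an open $G$ with $\capa_1(G)<\eps$ containing this null set yields the finiteness of $u^{\wedge}|_{\Om\setminus G}$ and $u^{\vee}|_{\Om\setminus G}$, completing the proof.
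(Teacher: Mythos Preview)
The paper does not supply a proof of this proposition; it records it as a consequence of \cite[Corollary~4.2]{L-SA}, itself based on \cite[Theorem~1.1]{LaSh}. Your assembly argument---pulling a countable dense family of levels $t_k$ with $P(\{u>t_k\},\Om)<\infty$ from the coarea formula, invoking the $1$-quasiopenness of $I_{E_k}$ and $O_{E_k}$, and reading off the semicontinuity of $u^{\wedge}$ and $u^{\vee}$ from the identities $\{u^{\wedge}>t\}=\bigcup_{t_k>t}I_{E_k}$ and $\{u^{\vee}<t\}=\bigcup_{t_k<t}O_{E_k}$---is correct and is the natural reduction. Note that the paper separately quotes the technical ingredient as Proposition~\ref{prop:set of finite perimeter is quasiopen} (from \cite{L-Fed}), so within the paper's framework your reduction is fully rigorous once you cite that proposition rather than re-prove it.

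Your sketch of the technical ingredient itself, however, has real gaps. First, an $L^1$-convergent subsequence of $(w_i)$ converges only $\mu$-a.e., not $1$-q.e.; the exceptional set can have positive $1$-capacity, so the identification of $I_E\setminus H_0$ with $\{x:\lim_i w_i(x)=1\}$ up to a $\capa_1$-null set is unjustified as stated. Second, even granting that identification, $\{x:\lim_i w_i(x)=1\}=\bigcap_n\bigcup_N\bigcap_{i\ge N}\{|w_i-1|<1/n\}$ is not a relative $G_\delta$ but of higher Borel complexity. Third, a relative $G_\delta$ is not automatically $1$-quasiopen, so the ``upgrade'' step does not follow from outer regularity alone. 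The actual arguments in \cite{LaSh} and \cite{L-Fed} use different machinery (capacitary weak-type estimates, the boxing inequality, and the finiteness of $\mathcal H(\partial^*E)$), and you are right to flag this as the main obstacle---but the specific route you outline would not close it.
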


By \cite[Theorem 5.3]{AMP}, the variation measure of a $\BV$ function
can be decomposed into the absolutely continuous and singular part, and the latter
into the Cantor and jump part, as follows. Given an open set 
$\Omega\subset X$ and $u\in\BV(\Omega)$, we have for any Borel set $A\subset \Om$
\begin{equation}\label{eq:variation measure decomposition}
\begin{split}
\Vert Du\Vert(A) &=\Vert Du\Vert^a(A)+\Vert Du\Vert^s(A)\\
&=\Vert Du\Vert^a(A)+\Vert Du\Vert^c(A)+\Vert Du\Vert^j(A)\\
&=\int_{A}a\,d\mu+\Vert Du\Vert^c(A)+\int_{A\cap S_u}\int_{u^{\wedge}(x)}^{u^{\vee}(x)}\theta_{\{u>t\}}(x)\,dt\,d\mathcal H(x),
\end{split}
\end{equation}
where $a\in L^1(\Omega)$ is the density of the absolutely continuous part
and the functions $\theta_{\{u>t\}}\in [\alpha,C_d]$ 
are as in \eqref{eq:def of theta}.
Moreover, $\Vert Du\Vert^c(A)=0$ for any set $A$ of finite $\mathcal H$-measure.

Next we define the fine topology in the case $p=1$.
For the analogous definition and theory in the case $1<p<\infty$, see
e.g. the monographs \cite{AH,HKM,MZ} for the Euclidean case, as well as
\cite{BB,BBL-SS,BBL-CCK,BBL-WC} for the metric space setting.

\begin{definition}\label{def:1 fine topology}
We say that $A\subset X$ is $1$-thin at the point $x\in X$ if
\[
\lim_{r\to 0}r\frac{\rcapa_1(A\cap B(x,r),B(x,2r))}{\mu(B(x,r))}=0.
\]
We also say that a set $U\subset X$ is $1$-finely open if $X\setminus U$ is $1$-thin at every $x\in U$. Then we define the $1$-fine topology as the collection of $1$-finely open sets on $X$.

We denote the $1$-fine interior of a set $H\subset X$, i.e. the largest $1$-finely open set contained in $H$, by $\fint H$. We denote the $1$-fine closure of a set $H\subset X$, i.e. the smallest $1$-finely closed set containing $H$, by $\overline{H}^1$. The $1$-fine boundary of $H$
is $\partial^1 H:=\overline{H}^1\setminus \fint H$.
The $1$-base $b_1 H$ is defined as the set of points
where $H$ is \emph{not} $1$-thin.

We say that a function $u$ defined on a set $U\subset X$ is $1$-finely continuous at $x\in U$ if it is continuous at $x$ when $U$ is equipped with the induced $1$-fine topology on $U$ and $[-\infty,\infty]$ is equipped
with the usual topology.
\end{definition}

See \cite[Section 4]{L-FC} for discussion on this definition, and for a proof of the fact that the
$1$-fine topology is indeed a topology.
Using \cite[Proposition 6.16]{BB}, we see that a set $A\subset X$ is $1$-thin at $x\in X$
if and only if
\[
\lim_{r\to 0}\frac{\rcapa_1(A\cap B(x,r),B(x,2r))}
{\rcapa_1(B(x,r),B(x,2r))}=0.
\]

Now we list some known facts concerning the $1$-fine topology.
It is stated in \cite[Corollary 3.5]{L-Fed} that for any $A\subset X$,
\begin{equation}\label{eq:characterization of fine closure}
\overline{A}^1=A\cup b_1 A.
\end{equation}
By \cite[Lemma 3.1]{L-Fed} we have for any set $A\subset X$
\begin{equation}\label{eq:cap thickness points contain measure thickness points}
I_A\cup \partial^*A\subset b_1 A\subset \overline{A}^1.
\end{equation}
Note that by Lebesgue's differentiation theorem
(see e.g. \cite[Chapter 1]{Hei}),
for $\mu$-measurable $E\subset X$ we have $\mu(I_E\Delta E)=0$, where $\Delta$
denotes the symmetric difference. Thus the above implies
\begin{equation}\label{eq:meas th closure belongs to base}
I_E\cup\partial^*E\subset b_1 I_E\subset \overline{I_E}^1.
\end{equation}
By \cite[Proposition 3.3]{L-Fed},
\begin{equation}\label{eq:capacity of fine closure}
\capa_1(\overline{A}^1)=\capa_1(A)\quad\textrm{for any }A\subset X.
\end{equation}
\begin{theorem}[{\cite[Corollary 6.12]{L-CK}}]\label{thm:finely open is quasiopen and vice versa}
A set $U\subset X$ is $1$-quasiopen if and only if it is the union of a $1$-finely
open set and a $\mathcal H$-negligible set.
\end{theorem}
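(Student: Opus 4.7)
The plan is to split the equivalence into its two directions. By \eqref{eq:null sets of Hausdorff measure and capacity}, ``$\mathcal H$-negligible'' coincides with ``of $1$-capacity zero'', so the statement is equivalent to: $U$ is $1$-quasiopen if and only if $U=V\cup N$ for some $1$-finely open $V$ and some $N$ with $\capa_1(N)=0$.

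For the forward implication, I would, for each $i\in\N$, choose an open set $G_i\subset X$ with $\capa_1(G_i)<2^{-i}$ such that $U\cup G_i$ is open, and set
\[
V_i:=(U\cup G_i)\setminus \overline{G_i}^1.
\]
Each $V_i$ is the difference of a (finely) open set and a $1$-finely closed set, hence is $1$-finely open; and $V_i\subset U$ because $G_i\subset \overline{G_i}^1$. Then $V:=\bigcup_i V_i$ is $1$-finely open and contained in $U$. Any point of $U\setminus V$ lies in $\overline{G_i}^1$ for every $i$, so by \eqref{eq:capacity of fine closure} and monotonicity
\[
\capa_1(U\setminus V)\le \capa_1(\overline{G_i}^1)=\capa_1(G_i)<2^{-i}
\]
for all $i$, giving $\capa_1(U\setminus V)=0$.

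For the reverse implication, outer regularity of $\capa_1$ lets one absorb the capacity-zero piece $N$ into an open set of arbitrarily small capacity, so the task reduces to showing that every $1$-finely open set $V$ is $1$-quasiopen. I would proceed by covering $V$ with countably many balls $B_i=B(x_i,r_i)$ along which the thinness of $X\setminus V$ at $x_i$ supplies test functions $u_i\in N_0^{1,1}(B(x_i,2r_i))$ with $u_i\ge 1$ on $(X\setminus V)\cap B_i$ and with $\int g_{u_i}\,d\mu$ as small as desired relative to $\mu(B_i)/r_i$. Summing these with geometrically decaying weights produces a single $w\in N^{1,1}(X)$ of arbitrarily small norm that is $\ge 1$ on the relevant portion of $X\setminus V$; applying the quasicontinuity of $w$ on $X$ (a special case of Theorem~\ref{thm:quasicontinuity on quasiopen sets}) together with a Chebyshev-type estimate on $\{w>1/2\}$ then yields an open set $G$ of capacity $<\eps$ with $V\cup G$ open. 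The main obstacle is precisely this synthesis: choosing the cover and the weights so that the total capacity is controlled, and simultaneously verifying that $V\cup G$ is topologically open (which requires handling accumulation of $X\setminus V$ onto $V$), is where the real work sits, whereas the forward direction is essentially formal given \eqref{eq:capacity of fine closure}.
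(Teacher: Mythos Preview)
This theorem is not proved in the present paper at all; it is imported verbatim as \cite[Corollary~6.12]{L-CK} and used as a black box throughout. There is therefore no ``paper's own proof'' to compare your proposal against.

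On the substance of your proposal: the forward direction is correct and complete. With $V_i:=(U\cup G_i)\setminus\overline{G_i}^1$ one has a $1$-finely open subset of $U$, and any $x\in U\setminus\bigcup_i V_i$ lies in every $\overline{G_i}^1$, so \eqref{eq:capacity of fine closure} gives $\capa_1(U\setminus V)\le\capa_1(G_i)<2^{-i}$ for all $i$. This is essentially the standard argument for this implication.

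The reverse direction, however, is only a heuristic outline, and you correctly flag it as such. The genuine obstacle is not merely ``choosing weights''; it is that $1$-thinness of $X\setminus V$ at a point $x\in V$ gives you control of $(X\setminus V)\cap B(x,r)$ only for a sequence $r\to 0$ depending on $x$, so there is no reason a countable family of such balls should cover $V$, and even if it did, the test functions $u_i$ you describe are $\ge 1$ on $(X\setminus V)\cap B_i$ but need not produce an open $G$ with $B(y,\delta)\cap(X\setminus V)\subset G$ for \emph{every} $y\in V$. Bridging this gap is precisely the content of the cited work: one first proves a weak Cartan-type property for $p=1$ (for $A$ $1$-thin at $x$, a single $N^{1,1}$ function witnesses the thinness uniformly in a neighbourhood of $x$), and then combines it with a quasi-Lindel\"of argument to pass from pointwise to global. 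Neither ingredient is available from the tools collected in the present paper, and your sketch does not supply them.
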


\begin{theorem}[{\cite[Theorem 5.1]{L-NC}}]\label{thm:fine continuity and quasicontinuity equivalence}
	A function $u$ on a $1$-quasiopen set $U$ is
	$1$-quasicontinuous on $U$ if and only if it is finite $1$-q.e. and $1$-finely
	continuous $1$-q.e. in $U$.
\end{theorem}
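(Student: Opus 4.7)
My plan is to prove the two implications separately. The forward direction, from $1$-quasicontinuity to finiteness and $1$-fine continuity $1$-quasieverywhere, should follow quickly from the capacity bounds available in the preliminaries. The reverse direction, from pointwise fine continuity to a globally open bad set for quasicontinuity, is where the main technical difficulty lies.

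For the forward direction, I will select, for each $n\in\N$, an open $G_n\subset X$ with $\capa_1(G_n)<2^{-n}$ such that $u|_{U\setminus G_n}$ is finite and metric-continuous. The set $N:=\bigcap_n \overline{G_n}^1$ then satisfies $\capa_1(N)\le\capa_1(\overline{G_n}^1)=\capa_1(G_n)<2^{-n}$ for every $n$ by \eqref{eq:capacity of fine closure}, and so $\capa_1(N)=0$. At any $x\in U\setminus N$ there is some $n$ with $x\notin\overline{G_n}^1=G_n\cup b_1 G_n$ by \eqref{eq:characterization of fine closure}, hence $V:=X\setminus\overline{G_n}^1$ is a $1$-finely open neighborhood of $x$ contained in $X\setminus G_n$. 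Since the $1$-fine topology is finer than the metric topology, metric continuity of $u$ on $V\cap U\subset U\setminus G_n$ implies $1$-fine continuity of $u$ at $x$, while finiteness at $x$ is direct.

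For the reverse direction, let $N_0\subset U$ be the $\capa_1$-null exceptional set of the hypothesis and fix $\eps>0$. Outer regularity of $\capa_1$ provides an open $G_0\supset N_0$ with $\capa_1(G_0)<\eps/3$, and $1$-quasiopenness of $U$ supplies an open $G_1$ with $\capa_1(G_1)<\eps/3$ and $U\cup G_1$ open. The main obstacle is then to produce an open $G_2$ with $\capa_1(G_2)<\eps/3$ such that $u|_{U\setminus(G_0\cup G_1\cup G_2)}$ is metric-continuous. The strategy is to show that $u^{-1}(I)\cap U$ is $1$-quasiopen for every open $I\subset\R$: at each $x\in U\setminus N_0$ with $u(x)\in I$, fine continuity yields a $1$-finely open $V_x\ni x$ with $u(V_x\cap U)\subset I$, and the union $W_I:=\bigcup_{x} V_x$ is $1$-finely open, hence $1$-quasiopen by Theorem \ref{thm:finely open is quasiopen and vice versa}, so that its intersection $W_I\cap U$ with the $1$-quasiopen set $U$ is $1$-quasiopen as well (intersections of $1$-quasiopen sets being $1$-quasiopen by a direct verification from the definition). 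Since $W_I\cap U$ differs from $u^{-1}(I)\cap U$ only within $N_0$, which has capacity and hence $\cH$-measure zero by \eqref{eq:null sets of Hausdorff measure and capacity}, the preimage $u^{-1}(I)\cap U$ is itself $1$-quasiopen.

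Choosing a countable basis $\{(a_k,b_k)\}_{k\in\N}$ of the topology of $\R$ and, for each $k$, an open $H_k$ with $\capa_1(H_k)<\eps\cdot 2^{-k}/3$ such that $(u^{-1}((a_k,b_k))\cap U)\cup H_k$ is open, the set $G_2:=\bigcup_k H_k$ satisfies $\capa_1(G_2)<\eps/3$ by countable subadditivity. Since $H_k\subset G_2$, the intersection
\[
u^{-1}((a_k,b_k))\cap\bigl(U\setminus(G_0\cup G_1\cup G_2)\bigr)=\bigl((u^{-1}((a_k,b_k))\cap U)\cup H_k\bigr)\cap\bigl(U\setminus(G_0\cup G_1\cup G_2)\bigr)
\]
is relatively open in $U\setminus(G_0\cup G_1\cup G_2)$ for every $k$, and $u$ is finite and metric-continuous there, completing the proof.
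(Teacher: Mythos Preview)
The paper does not give a proof of this theorem; it is quoted verbatim from \cite[Theorem 5.1]{L-NC} and used as a black box, so there is nothing in the present paper to compare your argument against.

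That said, your argument is essentially correct. In the forward direction the key observation---that metric continuity of $u$ on $U\setminus G_n$ together with $x\notin\overline{G_n}^1$ gives $1$-fine continuity at $x$---goes through once one intersects a small metric ball around $x$ with the $1$-finely open set $X\setminus\overline{G_n}^1$, exactly as you indicate.

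In the reverse direction the set $G_1$ (coming from the $1$-quasiopenness of $U$) is never actually used: the definition of $1$-quasicontinuity only requires an open $G$ with $\capa_1(G)<\eps$ such that $u|_{U\setminus G}$ is finite and continuous, and your $G_0\cup G_2$ already accomplishes this. Apart from this redundancy, the route you take---showing that each $u^{-1}((a_k,b_k))\cap U$ is $1$-quasiopen by writing it as $(W_I\cap U)$ together with a subset of the $\capa_1$-null set $N_0$, invoking Theorem~\ref{thm:finely open is quasiopen and vice versa} and the stability of $1$-quasiopenness under finite intersections and under adjoining $\capa_1$-null sets, and then passing to a countable base of $\R$---is sound and is a natural direct approach to the result.
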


\emph{Throughout this paper we assume that $(X,d,\mu)$ is a complete metric space
	that is equipped with the doubling measure $\mu$ and supports a
	$(1,1)$-Poincar\'e inequality.}

\section{Preliminary results}\label{sec:preliminary results}

In this section we prove some preliminary results.

By \cite[Corollary 2.21]{BB} we know that if $H\subset X$ is a
$\mu$-measurable set and $v,w\in N^{1,1}(H)$, then
\begin{equation}\label{eq:upper gradient in coincidence set}
g_v=g_w\ \ \textrm{a.e. in}\ \ \{x\in H:\,v(x)=w(x)\},
\end{equation}
where $g_v$ and $g_w$ are the minimal $1$-weak upper gradients of $v$ and $w$ in $H$.

The following lemma is a special case of \cite[Lemma 1.52]{BB}.

\begin{lemma}\label{lem:sup is upper gradient}
Let $u_i$, $i\in\N$, be functions on a $\mu$-measurable set $H\subset X$
with $1$-weak upper gradients $g_i$.
Let $u:=\sup_{i\in\N} u_i$ and $g:=\sup_{i\in\N} g_i$,
and suppose that $\mu(\{u=\infty\})=0$.
Then $g$ is a $1$-weak upper gradient of $u$ in $H$.
\end{lemma}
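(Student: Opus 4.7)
The plan is to combine the exceptional curve families for each $(u_i,g_i)$ using countable subadditivity of the $1$-modulus, following the standard strategy of \cite[Lemma 1.52]{BB}.

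For each $i\in\N$, let $\Gamma_i$ be the family of nonconstant curves in $H$ on which the pair $(u_i,g_i)$ fails the upper gradient inequality; by hypothesis $\Mod_1(\Gamma_i)=0$, and countable subadditivity gives $\Mod_1\bigl(\bigcup_{i}\Gamma_i\bigr)=0$. Let $E:=\{u=\infty\}$. Since $\mu(E)=0$, for each $n\in\N$ the Borel function $n\ch_E$ has zero $L^1$-norm and is admissible for the subfamily $\{\gamma:\int_\gamma \ch_E\,ds\ge 1/n\}$; taking the countable union over $n$, the family $\Gamma_E$ of curves with $\int_\gamma \ch_E\,ds>0$ has $\Mod_1=0$. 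Set $\Gamma:=\Gamma_E\cup\bigcup_i\Gamma_i$, still of zero $1$-modulus.

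Next I fix a nonconstant curve $\gamma\notin\Gamma$ in $H$ with endpoints $x,y$ and assume $\int_\gamma g\,ds<\infty$ (otherwise the upper gradient inequality for $(u,g)$ is trivial). For every $i$,
\[
u_i(x)\le u_i(y)+\int_\gamma g_i\,ds\le u(y)+\int_\gamma g\,ds.
\]
If $u(y)<\infty$, taking $\sup_i$ on the left yields $u(x)\le u(y)+\int_\gamma g\,ds<\infty$; the symmetric inequality with $x$ and $y$ interchanged then gives $|u(x)-u(y)|\le\int_\gamma g\,ds$, which is the desired conclusion.

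The only remaining case is $u(x)=u(y)=\infty$, which I rule out as follows. Since $\gamma\notin\Gamma_E$, we have $\int_\gamma \ch_E\,ds=0$, so some interior point $z=\gamma(t)$ satisfies $u(z)<\infty$. Applying the previous step to the subcurve $\gamma|_{[0,t]}$ (which avoids a further $1$-modulus-null family, by the standard fact that subcurves inherit the upper gradient inequality outside a null family) gives $u(x)\le u(z)+\int_\gamma g\,ds<\infty$, contradicting $u(x)=\infty$. The main subtlety, and the reason the hypothesis $\mu(\{u=\infty\})=0$ is needed, is precisely to discard curves whose endpoints lie in $E$.
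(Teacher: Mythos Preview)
The paper does not prove this lemma but simply cites it as a special case of \cite[Lemma~1.52]{BB}; your argument is precisely the standard proof of that result and is correct. The only cosmetic point is that the subcurve step at the end would be cleaner if you enlarged each $\Gamma_i$ at the outset (via Lemma~\ref{lem:subcurves}) to contain all curves possessing a bad subcurve, so that $\gamma|_{[0,t]}$ is automatically outside the exceptional family rather than requiring you to discard ``a further $1$-modulus-null family'' mid-proof.
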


\begin{lemma}\label{lem:capacity and perimeter}
Let $G\subset X$ and let $\eps>0$.
Then there exists an open set $W\supset G$ such that
$\capa_1(W)< C\capa_1(G)+\eps$ and $P(W,X)< C\capa_1(G)+\eps$,
for a constant $C=C(C_d,C_P,\lambda)$.
\end{lemma}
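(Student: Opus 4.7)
We may assume $\capa_1(G)<\infty$, as otherwise any open set suffices. The plan is to pass from $G$ to a super-level set of an admissible Newton-Sobolev function, whose perimeter we control via the coarea formula, and then to enlarge that level set to an open set using a covering by balls with the right volume-to-radius ratio.

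First I would choose $u\in N^{1,1}(X)$ with $u\ge 1$ on $G$ and $\|u\|_{N^{1,1}(X)}<\capa_1(G)+\eps_0$ for a small $\eps_0>0$ to be fixed at the end; truncating by $\min\{u,1\}$ does not increase the norm, so we may take $0\le u\le 1$ with $u=1$ on $G$. By the coarea formula \eqref{eq:coarea},
\[
\int_0^1 P(\{u>t\},X)\,dt\le \int_X g_u\,d\mu\le\|u\|_{N^{1,1}(X)},
\]
and a Chebyshev argument yields some $t\in(1/2,1)$ with $P(E,X)\le 2\|u\|_{N^{1,1}(X)}$ for $E:=\{u>t\}$; clearly $G\subset E$, and $\mu(E)\le t^{-1}\int u\,d\mu\le 2\|u\|_{N^{1,1}(X)}$.

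The main step is to enlarge the (possibly non-open) level set $E$ to an open $W\supset E$ controlling $\capa_1(W)$ and $P(W,X)$ \emph{simultaneously}. For this I would invoke a metric-space boxing inequality (of the type due to Kinnunen--Korte--Shanmugalingam--Tuominen) to cover $E$ by balls $B_i=B(x_i,r_i)$ with $r_i\le 1$ and $\sum_i\mu(B_i)/r_i\le C_3\, P(E,X)$, and set $W:=\bigcup_i 2B_i$, which is open and contains $G$. The perimeter estimate follows from countable subadditivity of the variation measure and the standard bound $P(B(x,r),X)\le C\mu(B(x,r))/r$, giving $P(W,X)\le\sum_i P(2B_i,X)\le C_4\sum_i\mu(B_i)/r_i\le C_5\, P(E,X)$. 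For the capacity estimate I would take the explicit admissible function $v:=\sup_i v_i$ with $v_i(x):=\max\{0,1-\dist(x,2B_i)/r_i\}$; by Lemma~\ref{lem:sup is upper gradient}, $\sup_i g_{v_i}$ is a $1$-weak upper gradient of $v$, and a direct computation bounds both $\int_X v\,d\mu$ and $\int_X g_v\,d\mu$ by $C_6\sum_i\mu(B_i)/r_i$, using doubling and $r_i\le 1$ to absorb the $L^1$ part. Combining everything and choosing $\eps_0$ small then yields both desired inequalities with a constant $C=C(C_d,C_P,\lambda)$.

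The main obstacle is that $E$ is not open; a naive quasiopen-to-open enlargement based on the quasicontinuity of $u$ (covering $E$ by an open set $E\cup G_0$ with $\capa_1(G_0)$ small) would control $\capa_1(W)$ but not $P(W,X)$, since a set of small $1$-capacity need not have small perimeter. It is precisely the boxing inequality that handles both estimates at once, because the resulting balls carry the codimension-one geometry inherited from $P(E,X)$, while their Lipschitz cutoffs are simultaneously admissible for $\capa_1(W)$ at a comparable cost.
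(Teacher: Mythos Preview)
Your approach is essentially the paper's: produce a level set $E$ with controlled measure and perimeter, apply a boxing inequality to cover it by balls with $\sum_j \mu(B_j)/r_j$ controlled, and build both the capacity and perimeter estimates for $W$ from these balls using Lipschitz cutoffs and subadditivity.

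Two technical points deserve care. First, the claimed ``standard bound'' $P(B(x,r),X)\le C\mu(B(x,r))/r$ is \emph{not} valid for an arbitrary radius in a general doubling space; the paper fixes this by applying the coarea formula to $y\mapsto d(x_j,y)$ to select $s_j\in[r_j,2r_j]$ with $P(B(x_j,s_j),X)\le C_d\,\mu(B(x_j,r_j))/r_j$, and then takes $W=\bigcup_j B(x_j,s_j)$. Second, the boxing inequality in \cite{HaKi} covers $I_E$ (with right-hand side $C_B(\mu(E)+P(E,X))$, not just $P(E,X)$), so to guarantee $G\subset I_E$ you should first use the outer capacity property to take $u\ge 1$ on an \emph{open} neighborhood of $G$; the paper achieves the same effect by passing through $\capa_{\BV}$, whose test functions are $\ge 1$ on a neighborhood by definition.
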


Recall that $C_d$, $C_P$, and $\lambda$ are the doubling constant of $\mu$
and the constants in the Poincar\'e inequality \eqref{eq:poincare inequality}.

\begin{proof}
By \eqref{eq:Newtonian and BV capacities are comparable} we have
$\capa_{\BV}(G)\le \capa_1(G)$.
By \cite[Lemma 3.2]{HaKi} (which is simply an application of Cavalieri's principle
and the coarea formula \eqref{eq:coarea})
we find a set $E\subset X$ containing a neighborhood of $G$ such that
\begin{equation}\label{eq:estimate of BV norm}
\mu(E)+P(E,X)<\capa_{\BV}(G)+\eps\le \capa_1(G)+\eps.
\end{equation}
By a suitable \emph{boxing inequality}, see \cite[Lemma 4.2]{HaKi}, we find balls $\{B(x_j,r_j)\}_{j=1}^{\infty}$ with $r_j\le 1$ covering
the measure-theoretic interior $I_E$, and thus also the set $G$, such that
\[
\sum_{j=1}^{\infty}\frac{\mu(B(x_j,r_j))}{r_j}\le C_B (\mu(E)+P(E,X))
\]
for some constant $C_B=C_B(C_d,C_P,\lambda)$.
For each $j\in\N$, by applying the coarea formula \eqref{eq:coarea} to the function
$u(y)= d(x_j,y)$,
we find a number $s_j\in [r_j,2r_j]$ such that
\begin{equation}\label{eq:perimeter estimate xj sj rj}
P(B(x_j,s_j),X) \le C_d\frac{\mu(B(x_j,r_j))}{r_j}.
\end{equation}
Define $1/s_j$-Lipschitz functions
\[
\eta_j(\cdot):=\max\left\{0,1-\frac{\dist(\cdot,B(x_j,s_j))}{s_j}\right\},\quad j\in\N,
\]
so that $0\le \eta_j\le 1$ on $X$, $\eta_j=1$
in $B(x_j,s_j)$ and
$\eta_j=0$ in $X\setminus B(x_j,2s_j)$.
Let $\eta:=\sup_{j\in\N} \eta_j$.
By \eqref{eq:upper gradient in coincidence set},
$\ch_{B(x_j,2s_j)}/s_j$ is a $1$-weak upper
gradient of $\eta_j$. Hence by Lemma \ref{lem:sup is upper gradient}
the minimal $1$-weak upper gradient of $\eta$ satisfies
$g_\eta\le\sum_{i=1}^{\infty}\ch_{B(x_j,2s_j)}/s_j$. Then
\begin{align*}
\int_X g_{\eta}\,d\mu
\le\sum_{j=1}^{\infty}\frac{\mu(B(x_j,2s_j))}{s_j}
&\le C_d^2\sum_{j=1}^{\infty}\frac{\mu(B(x_j,r_j))}{r_j}\\
&\le C_d^2 C_B (\mu(E)+P(E,X))\\
&< C_d^2 C_B (\capa_1(G)+\eps)\quad\textrm{by }\eqref{eq:estimate of BV norm}.
\end{align*}
Similarly we show that $\Vert \eta\Vert_{L^1(X)}\le C_d^2 C_B (\capa_1(G)+\eps)$.
Let $W:=\bigcup_{j=1}^{\infty}B(x_j,s_j)$.
Since $\eta=1$ in $W$, we get the estimate
\[
\capa_1(W)\le \Vert \eta\Vert_{N^{1,1}(X)}\le 2C_d^2 C_B (\capa_1(G)+\eps).
\]
Using the lower semicontinuity of perimeter with respect to $L^1$-convergence, as well as
\eqref{eq:variation of min and max}, we get
\begin{align*}
P(W,X)\le \sum_{j=1}^{\infty}P(B(x_j,s_j),X)
&\le C_d\sum_{j=1}^{\infty}\frac{\mu(B(x,r_j))}{r_j}\quad\textrm{by }\eqref{eq:perimeter estimate xj sj rj}\\
&\le C_d C_B(\mu(E)+P(E,X))\\
&< C_d C_B(\capa_1(G)+\eps).
\end{align*}
\end{proof}

Next we note that \emph{Federer's characterization} of sets of finite
perimeter holds also in metric spaces.

\begin{theorem}[{\cite[Theorem 1.1]{L-Fedchar}}]\label{thm:characterization}
	Let $\Omega\subset X$ be open, let $E\subset X$ be $\mu$-measurable, and suppose that
	$\mathcal H(\partial^*E \cap \Omega)<\infty$. Then $P(E,\Omega)<\infty$.
\end{theorem}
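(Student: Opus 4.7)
The plan is to show $P(E,\Omega)<\infty$ by constructing Lipschitz approximants $u_k\to\ch_E$ in $L^1(\Omega)$ with $\int_\Omega g_{u_k}\,d\mu$ bounded by a multiple of $\mathcal H(\partial^*E\cap\Omega)$; the definition of total variation then yields the claim. By exhausting $\Omega$ with bounded open subsets and using that $\|D\ch_E\|$ is a Radon measure on any open set where it is finite, it suffices to treat the case $\Omega$ bounded, so that $\ch_E\in L^1(\Omega)$.

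The central ingredient, and the main obstacle, is a \emph{separation lemma}: if $B=B(x_0,\rho)$ satisfies $\lambda B\cap\partial^*E=\emptyset$, then either $\mu(B\cap E)=0$ or $\mu(B\setminus E)=0$. This is a qualitative consequence of the $(1,1)$-Poincar\'e inequality \eqref{eq:poincare inequality} and must be proved without appealing to \eqref{eq:consequence theta}, since the upper bound there presupposes $P(E,\cdot)<\infty$ and would render the argument circular. The idea is to argue by contradiction: if both $\mu(B\cap E)>0$ and $\mu(B\setminus E)>0$, then one can use Lipschitz truncations of $\ch_E$ supported in $\lambda B$ to produce nontrivial oscillation against arbitrarily small upper-gradient content, forcing via the density definitions \eqref{eq:measure theoretic interior}--\eqref{eq:measure theoretic exterior} a point of $\partial^*E$ inside $\lambda B$.

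With the separation lemma in hand I set up a discrete convolution adapted to a Hausdorff cover. For each $k\in\N$ cover $\partial^*E\cap\Omega$ by balls $\{B(x_j,r_j)\}_j$ with $r_j<1/k$ and $\sum_j\mu(B(x_j,r_j))/r_j\le\mathcal H(\partial^*E\cap\Omega)+1/k$, and extend to a Whitney-type cover of $\Omega$ by additional balls $\{B(y_\beta,\rho_\beta)\}_\beta$ whose radii $\rho_\beta$ are comparable to the distance from $y_\beta$ to $\partial^*E\cap\Omega$ (so that $\lambda\cdot 2B(y_\beta,\rho_\beta)$ is disjoint from $\partial^*E$). Let $\{\varphi_\alpha\}$ be a Lipschitz partition of unity subordinate to the doubled cover with pointwise Lipschitz constants comparable to the reciprocals of the corresponding radii, and define
\[
u_k(x):=\sum_\alpha\varphi_\alpha(x)\,(\ch_E)_{2B_\alpha}.
\]
Then $u_k\in\Lip(\Omega)\cap[0,1]$ and $u_k\to\ch_E$ in $L^1(\Omega)$ by Lebesgue differentiation. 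By the separation lemma the averages $(\ch_E)_{2B(y_\beta,\rho_\beta)}$ are all $0$ or $1$, and they are locally constant across overlapping Whitney balls in any connected component of $\Omega\setminus\bigcup_j B(x_j,r_j)$; consequently the $\beta$-indexed terms contribute nothing to the minimal upper gradient. What remains satisfies $g_{u_k}\le C\sum_j\ch_{2B(x_j,r_j)}/r_j$, so
\[
\int_\Omega g_{u_k}\,d\mu\le C\sum_j\frac{\mu(B(x_j,r_j))}{r_j}\le C\bigl(\mathcal H(\partial^*E\cap\Omega)+1/k\bigr).
\]
Letting $k\to\infty$ and invoking the defining lower-semicontinuity of $\|D\ch_E\|(\Omega)$ gives $P(E,\Omega)\le C\mathcal H(\partial^*E\cap\Omega)<\infty$, as required.
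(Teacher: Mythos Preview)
First, note that this paper does not prove Theorem~\ref{thm:characterization} at all: it is quoted verbatim from \cite[Theorem 1.1]{L-Fedchar} and used as a black box. So there is no ``paper's own proof'' to compare against; the relevant comparison is with the argument in \cite{L-Fedchar}, which is considerably more involved than your sketch and relies on the fine topology and quasiopen sets rather than on a direct discrete-convolution construction.

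Your proposal has a genuine gap, and it is precisely where you flag it: the separation lemma. You correctly observe that the route via \eqref{eq:consequence theta} is circular, but the alternative you offer (``Lipschitz truncations of $\ch_E$ supported in $\lambda B$ to produce nontrivial oscillation against arbitrarily small upper-gradient content'') is not an argument. There is no mechanism that forces the upper-gradient content of an approximant of $\ch_E$ to be small on $\lambda B$ without already knowing something like $P(E,\lambda B)<\infty$; and the Poincar\'e inequality \eqref{eq:poincare inequality} by itself only converts gradient information into oscillation, not the other way around. Nor does a topological shortcut work: even granting that balls in a PI space are connected, $I_E\cap\lambda B$ and $O_E\cap\lambda B$ are merely measurable sets (not known to be open or $1$-quasiopen at this stage), so connectedness does not preclude both having positive measure. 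In fact the quasiopenness of $I_E$ and $O_E$ (Proposition~\ref{prop:set of finite perimeter is quasiopen}) is exactly what one would want here, but in this paper that proposition is stated under the hypothesis $P(E,\Omega)<\infty$, and its extension in Proposition~\ref{prop:quasiopen sets in quasiopen sets} \emph{uses} Theorem~\ref{thm:characterization}. So the separation lemma is not a lemma on the way to the theorem; it is essentially equivalent to it.

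Once the separation lemma is assumed, the rest of your outline (Whitney cover away from $\partial^*E$, discrete convolution, local constancy of the averages on overlapping Whitney balls) is plausible and standard, though the interaction between the Whitney balls and the Hausdorff-cover balls near $\partial^*E$ still needs care. But the argument as written does not stand, because its central step is asserted rather than proved. If you want to pursue this direction, you should consult \cite{L-Fedchar} to see how the obstruction is actually overcome.
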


The converse holds by \eqref{eq:consequence theta}.

Recall the definitions of the measure-theoretic interior and exterior from
\eqref{eq:measure theoretic interior} and \eqref{eq:measure theoretic exterior}.

\begin{proposition}[{\cite[Proposition 4.2]{L-Fed}}]\label{prop:set of finite perimeter is quasiopen}
Let $\Omega\subset X$ be open and let $E\subset X$ be $\mu$-measurable with
$P(E,\Omega)<\infty$. Then $I_E\cap\Omega$ and $O_E\cap\Omega$ are $1$-quasiopen sets.
\end{proposition}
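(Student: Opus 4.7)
The plan is to deduce both claims from the quasi-semicontinuity of BV functions (Proposition~\ref{prop:quasisemicontinuity}) applied to $u=\ch_E$. A direct computation from \eqref{eq:lower approximate limit}--\eqref{eq:upper approximate limit} shows that on $\Om$ one has $(\ch_E)^\wedge=\ch_{I_E}$ and $(\ch_E)^\vee=\ch_{X\setminus O_E}$; consequently $I_E\cap\Om=\{(\ch_E)^\wedge>1/2\}$ and $O_E\cap\Om=\{(\ch_E)^\vee<1/2\}$. If one can produce an open set $G$ of small $1$-capacity on whose complement $(\ch_E)^\wedge$ is lower semicontinuous and $(\ch_E)^\vee$ is upper semicontinuous, then the union of $G$ with either of these level sets will automatically be open.

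Since $\mu(\Om)$ and $\mu(E)$ need not be finite, Proposition~\ref{prop:quasisemicontinuity} cannot be applied to $\ch_E$ on $\Om$ directly; instead I would exhaust $\Om$ by bounded open sets $\Om_k\Subset\Om_{k+1}$ with $\bigcup_k\Om_k=\Om$, which is possible because $X$ is proper. Each $\Om_k$ satisfies $\mu(\Om_k)<\infty$ and $P(E,\Om_k)\le P(E,\Om)<\infty$, so $\ch_E\in\BV(\Om_k)$. Applying Proposition~\ref{prop:quasisemicontinuity} to $\ch_E\in\BV(\Om_k)$ with tolerance $\eps/2^{k+1}$ yields an open $G_k\subset\Om_k$ with $\capa_1(G_k)<\eps/2^{k+1}$ such that $(\ch_E)^\wedge|_{\Om_k\setminus G_k}$ is finite and lower semicontinuous, while $(\ch_E)^\vee|_{\Om_k\setminus G_k}$ is finite and upper semicontinuous. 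Set $G:=\bigcup_k G_k$; by countable subadditivity, $\capa_1(G)<\eps$, and $G$ is open.

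To verify that $(I_E\cap\Om)\cup G$ is open, take $x\in(I_E\cap\Om)\setminus G$ and choose $k$ with $x\in\Om_k$. Then $x\in\Om_k\setminus G_k$ and $(\ch_E)^\wedge(x)=1$, so by lower semicontinuity there is an open neighbourhood $V\subset\Om_k$ of $x$ with $(\ch_E)^\wedge>1/2$ on $V\cap(\Om_k\setminus G_k)$, i.e.\ $V\setminus G_k\subset I_E$. Hence $V\subset (I_E\cup G_k)\cap\Om=(I_E\cap\Om)\cup G_k\subset(I_E\cap\Om)\cup G$, as required. The argument for $(O_E\cap\Om)\cup G$ is entirely symmetric, using upper semicontinuity of $(\ch_E)^\vee$; crucially the same $G$ simultaneously handles both $I_E\cap\Om$ and $O_E\cap\Om$ because Proposition~\ref{prop:quasisemicontinuity} produces a single exceptional set for $u^\wedge$ and $u^\vee$.

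The main obstacle is the mild bookkeeping between $\Om$ and its exhaustion $\{\Om_k\}$; the substance of the proof is the identification of $I_E$ and $O_E$ with level sets of $(\ch_E)^\wedge$ and $(\ch_E)^\vee$, combined with the quasi-semicontinuity of these pointwise representatives, both of which are already at hand.
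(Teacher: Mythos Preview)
The paper does not prove this proposition; it simply cites it from \cite[Proposition~4.2]{L-Fed}. Your argument is correct and gives a self-contained proof using only tools already available in the present paper, namely the identification $(\ch_E)^{\wedge}=\ch_{I_E}$, $(\ch_E)^{\vee}=\ch_{X\setminus O_E}$ together with the quasi-semicontinuity result of Proposition~\ref{prop:quasisemicontinuity}. The exhaustion by bounded open subsets $\Om_k$ is the right device to reduce to the case $\ch_E\in\BV(\Om_k)$, and the gluing via $G=\bigcup_k G_k$ is handled cleanly. One minor remark: you do not actually use the condition $\Om_k\Subset\Om_{k+1}$ anywhere; boundedness of each $\Om_k$ and $\bigcup_k\Om_k=\Om$ suffice, so for instance $\Om_k=\Om\cap B(x_0,k)$ already works.
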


Now we generalize this proposition to quasiopen domains.

\begin{proposition}\label{prop:quasiopen sets in quasiopen sets}
Let $U\subset X$ be $1$-quasiopen and let
$E\subset X$ be $\mu$-measurable
with $\mathcal H(\partial^*E\cap U)<\infty$.
Then $I_E\cap U$ and $O_E\cap U$ are $1$-quasiopen sets.
\end{proposition}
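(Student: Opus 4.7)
The plan is to reduce to Proposition~\ref{prop:set of finite perimeter is quasiopen} for open domains by first finding an open set that contains essentially all of $U$, and then modifying $E$ near the small-capacity defect so that the modified set has finite perimeter in this enlarged set. Fix $\varepsilon>0$. Since $U$ is $1$-quasiopen, I would first choose an open $V$ with $\capa_1(V)$ as small as desired and $U\cup V$ open. Applying Lemma~\ref{lem:capacity and perimeter} to $V$, I obtain an open $W\supset V$ with both $\capa_1(W)$ and $P(W,X)$ small. The set $\Omega:=(U\cup V)\cup W=U\cup W$ is then open as a union of two open sets, and $\Omega\setminus W\subset U$ by construction.

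Next, I would take $E':=E\setminus W$. A direct check of densities shows that $\partial^*E'\subset(\partial^*E\setminus W)\cup\partial^*W$, so
\[
\partial^*E'\cap\Omega\subset(\partial^*E\cap U)\cup\partial^*W,
\]
which has finite $\mathcal H$-measure by hypothesis and by \eqref{eq:consequence theta} applied to $W$. Federer's characterization (Theorem~\ref{thm:characterization}) then gives $P(E',\Omega)<\infty$, and Proposition~\ref{prop:set of finite perimeter is quasiopen} provides an open $G_1$ of small capacity with $(I_{E'}\cap\Omega)\cup G_1$ open.

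Finally, I would stitch things back together. The inclusion $E'\subset E$ gives $I_{E'}\subset I_E$. By \eqref{eq:cap thickness points contain measure thickness points} and \eqref{eq:capacity of fine closure}, the $1$-fine closure $\overline{W}^1$ satisfies $\capa_1(\overline{W}^1)=\capa_1(W)$ and contains $W\cup I_W\cup\partial^*W$; by outer capacity I may enclose it in an open set $G_2$ of small capacity. Setting $G_0:=G_1\cup G_2$, the set $(I_E\cap U)\cup G_0$ should be open: any $x\in I_E\cap U$ outside $G_0$ lies outside $\overline{W}^1$, hence in $O_W$, which forces the density of $W$ at $x$ to vanish and therefore places $x$ in $I_{E'}\cap\Omega$; a ball around $x$ contained in $(I_{E'}\cap\Omega)\cup G_1$ then lies in $(I_E\cap U)\cup G_0$, because any $y\in I_{E'}\cap\Omega$ is either in $W\subset G_2$ or in $U\setminus W$, and in the latter case $y\in I_{E'}\subset I_E$. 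The statement for $O_E\cap U$ follows by the same argument applied to $X\setminus E$.

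The main obstacle is that a point of $I_E$ lying on $\partial W$ need not belong to $I_{E'}$, so one cannot transfer the quasiopenness of $I_{E'}\cap\Omega$ directly to $I_E\cap U$. The essential ingredient resolving this is the fine-topology identity $\capa_1(\overline{W}^1)=\capa_1(W)$, which allows the entire troublesome set $I_W\cup\partial^*W$ to be absorbed into an open set of arbitrarily small capacity.
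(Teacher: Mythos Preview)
Your proposal is correct and follows essentially the same route as the paper's proof: enlarge $U$ to an open set via a small-capacity open set of finite perimeter, modify $E$ so that Federer's characterization and Proposition~\ref{prop:set of finite perimeter is quasiopen} apply in the enlarged set, and then absorb the discrepancy using the identity $\capa_1(\overline{W}^1)=\capa_1(W)$ together with \eqref{eq:cap thickness points contain measure thickness points}. The only cosmetic difference is that the paper modifies $E$ by taking the \emph{union} $E\cup G_j$, whereas you take the \emph{difference} $E\setminus W$; this leads the paper to the clean set identity $I_E\setminus G_j'=I_{E\cup G_j}\setminus G_j'$ and a one-line conclusion, while your version requires the short case analysis you sketched, but the substance is the same.
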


\begin{proof}
We find a sequence of open sets $G_j\subset X$ such that
$U\cup G_j$ is open for each $j\in\N$ and $\capa_1(G_j)\to 0$ as $j\to\infty$.
By Lemma \ref{lem:capacity and perimeter}
we can assume that also $P(G_j,X)\to 0$, and so
$\mathcal H(\partial^*G_j)\to 0$ by \eqref{eq:consequence theta}.
It is straightforward to check that for each $j\in\N$
\[
\partial^*(E\cup G_j)\cap (U\cup G_j)
\subset (\partial^*E\cap U)\cup \partial^*G_j.
\]
Then
\[
\mathcal H(\partial^*(E\cup G_j)\cap (U\cup G_j))
\le \mathcal H(\partial^*E\cap U)+\mathcal H(\partial^*G_j)<\infty
\]
for each $j\in\N$.
By Theorem \ref{thm:characterization} we conclude that $P(E\cup G_j,U\cup G_j)<\infty$.
Thus each $I_{E\cup G_j}\cap (U\cup G_j)$ is $1$-quasiopen by Proposition
\ref{prop:set of finite perimeter is quasiopen}.
By \eqref{eq:capacity of fine closure} and the fact that $\capa_1$ is an
outer capacity, we can
take open sets $G_j'\supset \overline{G_j}^1$ such that still
$\capa_1(G_j')\to 0$.
By \eqref{eq:cap thickness points contain measure thickness points}
we have $I_{G_j}\cup \partial^* G_j\subset  \overline{G_j}^1\subset G_j'$,
and so
\[
I_{E}\setminus G_j'=I_{E\cup G_j}\setminus G_j'.
\]
Using this, we get
\[
(I_{E}\cap U)\cup G_j'=(I_{E}\cap (U\cup G_j))\cup G_j'
=(I_{E\cup G_j}\cap (U\cup G_j))\cup G_j',
\]
which is a union of a $1$-quasiopen and an open set for each $j\in\N$, and thus $1$-quasiopen.
It follows that $I_E\cap U$ is also $1$-quasiopen.
Similarly we show that $O_E\cap U$ is $1$-quasiopen.
\end{proof}

Recall the definitions concerning curves and $1$-modulus from page \pageref{curve discussion}.

\begin{proposition}\label{prop:ae curve goes through boundary quasiopen case}
Let $U\subset X$ be $1$-quasiopen and let
$E\subset X$ be $\mu$-measurable with $\mathcal H(\partial^*E\cap U)<\infty$.
Then for $1$-a.e. curve $\gamma$ in $U$
with $\gamma(0)\in I_E$ and $\gamma(\ell_{\gamma})\in O_E$, there exists
$t\in (0,\ell_{\gamma})$ such that $\gamma(t)\in\partial^*E$.
\end{proposition}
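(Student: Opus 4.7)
The plan is to approximate the quasiopen set $U$ by open sets, apply the analogous open-set version of the statement to each approximation, and then transfer the conclusion back to $E$ in $U$. This parallels the strategy of Proposition~\ref{prop:quasiopen sets in quasiopen sets}. For each $j\in\N$ I would pick an open set $G_j$ with $\capa_1(G_j)<2^{-j}$ and $U\cup G_j$ open; by Lemma~\ref{lem:capacity and perimeter} I may enlarge $G_j$ (retaining openness of $U\cup G_j$ by uniting with the original $G_j$) so that in addition $P(G_j,X)\to 0$, and hence $\mathcal H(\partial^*G_j)\to 0$ by \eqref{eq:consequence theta}. Using \eqref{eq:capacity of fine closure} together with the outer regularity of $\capa_1$, I then take open sets $G_j'\supset\overline{G_j}^1$ with $\capa_1(G_j')<2^{-j+1}$. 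Setting $E_j:=E\cup G_j$ and $U_j:=U\cup G_j$, the inclusion $\partial^*E_j\cap U_j\subset(\partial^*E\cap U)\cup\partial^*G_j$ already used in the proof of Proposition~\ref{prop:quasiopen sets in quasiopen sets}, combined with Theorem~\ref{thm:characterization}, yields $P(E_j,U_j)<\infty$.

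Next I would apply the analogous statement in the open-set case (standard for sets of finite perimeter in open sets, cf.\ \cite{L-NC}) to $E_j$ in $U_j$: for each $j$ there is an exceptional family $\Gamma_j$ of curves in $U_j$ with $\Mod_1(\Gamma_j)=0$ such that every curve in $U_j$ outside $\Gamma_j$ with endpoints in $I_{E_j}$ and $O_{E_j}$ meets $\partial^*E_j$. The decisive transfer step rests on the observation that $G_j'\supset\overline{G_j}^1\supset I_{G_j}\cup\partial^*G_j$ by \eqref{eq:cap thickness points contain measure thickness points}, so $X\setminus G_j'\subset O_{G_j}$; hence $G_j$ has density zero at every point outside $G_j'$, and the densities of $E_j$ and $E$ coincide there. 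This yields the identities
\[
I_E\setminus G_j'=I_{E_j}\setminus G_j',\qquad O_E\setminus G_j'=O_{E_j}\setminus G_j',\qquad \partial^*E\setminus G_j'=\partial^*E_j\setminus G_j'.
\]

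To finish, I would handle the exceptional curve families. Put $A_\infty:=\bigcap_{k}\bigcup_{j\ge k}G_j'$; the summability $\sum_j\capa_1(G_j')<\infty$ and \eqref{eq:null sets of Hausdorff measure and capacity} give $\mathcal H(A_\infty)=0$, so the family of curves in $U$ meeting $A_\infty$ has $1$-modulus zero. Hence for $1$-a.e.\ curve $\gamma$ in $U$ with the prescribed endpoints there is some $j$ for which $\gamma$ avoids $G_j'$ entirely and $\gamma\notin\Gamma_j$ (the countable union $\bigcup_j\Gamma_j$ remains $1$-null). The endpoints of $\gamma$ then lie in $I_{E_j}$ and $O_{E_j}$ by the density identity, so the open-set version produces some $t\in(0,\ell_\gamma)$ with $\gamma(t)\in\partial^*E_j$; since $\gamma(t)\notin G_j'$, the same identity gives $\gamma(t)\in\partial^*E$. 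The step I expect to be the main obstacle is the clean packaging of the exceptional sets: verifying that both the diagonal family of curves meeting every $G_j'$ and the countable union $\bigcup_j\Gamma_j$ are $1$-null, and that the density identities hold precisely on $X\setminus G_j'$, so that the crossing point produced by the open-set version truly lies in $\partial^*E$ and not merely in $\partial^*E_j$.
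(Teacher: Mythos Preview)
The paper's proof is far more direct than your approximation scheme. It simply invokes Proposition~\ref{prop:quasiopen sets in quasiopen sets} to conclude that $I_E\cap U$ and $O_E\cap U$ are $1$-quasiopen, hence $1$-path open by \cite[Remark~3.5]{S2}: for $1$-a.e.\ curve $\gamma$ in $U$ the preimages $\gamma^{-1}(I_E\cap U)$ and $\gamma^{-1}(O_E\cap U)$ are relatively open in $[0,\ell_\gamma]$. When $\gamma$ has the prescribed endpoints these preimages are nonempty, disjoint relatively open subsets of the connected set $[0,\ell_\gamma]$, so there must be a point $t$ in neither, giving $\gamma(t)\in\partial^*E$. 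No open approximants, no open-set version of the statement, and no diagonal curve family are needed.

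Your argument has a genuine gap precisely where you anticipate one. From ``$1$-a.e.\ curve avoids $A_\infty=\bigcap_k\bigcup_{j\ge k}G_j'$'' you infer ``for $1$-a.e.\ curve there is some $j$ with $\gamma\cap G_j'=\emptyset$'', but this implication is false: in $\R^2$ with $G_j'=(0,2^{-j})\times(-2^{-j},2^{-j})$ one has $A_\infty=\emptyset$, yet the segment from $(-1,0)$ to $(1,0)$ meets every $G_j'$. What you actually need is that the family of curves meeting \emph{every} $G_j'$ is $\Mod_1$-null, and the Borel--Cantelli set $A_\infty$ does not deliver this. A correct argument: by Lemma~\ref{lem:capacity and Newtonian function and bigger set} take $u_j\in N_0^{1,1}(G_j'')$ with $u_j=1$ on $G_j'$, $\int_X g_{u_j}\,d\mu\lesssim\capa_1(G_j')$, and $\sum_j\capa_1(G_j'')<\infty$; then $\capa_1(\limsup_j G_j'')=0$, so for $1$-a.e.\ $\gamma$ the point $\gamma(0)$ lies outside $G_j''$ for all $j\ge N(\gamma)$, and if such $\gamma$ also meets $G_j'$ then the upper gradient inequality forces $\int_\gamma g_{u_j}\,ds\ge 1$; hence $g_{u_j}$ is admissible for this subfamily and its modulus is at most $\int_X g_{u_j}\,d\mu\to 0$ as $j\to\infty$. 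So your route can be completed, but not by the argument you gave, and the paper's path-openness argument sidesteps the issue entirely.
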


\begin{proof}
By Proposition \ref{prop:quasiopen sets in quasiopen sets} we know that
$I_E\cap U$ and $O_E\cap U$ are $1$-quasiopen sets.
By \cite[Remark 3.5]{S2} they are also $1$-path open, meaning that for $1$-a.e. curve $\gamma$,
$\gamma^{-1}(I_E\cap U)$ and $\gamma^{-1}(O_E\cap U)$ are relatively
open subsets of $[0,\ell_{\gamma}]$.
Let $\gamma$ be such a curve in $U$, with
$\gamma(0)\in I_E$ and $\gamma(\ell_{\gamma})\in O_E$.
Since $\gamma^{-1}(I_E\cap U)$ and $\gamma^{-1}(O_E\cap U)$ are
nonempty disjoint relatively open subsets of the connected set $[0,\ell_{\gamma}]$,
there necessarily exists a point $t\in (0,\ell_{\gamma})$ with
\[
t\notin \gamma^{-1}(I_E\cap U)\cup\gamma^{-1}(O_E\cap U),
\]
and so $t\in \gamma^{-1}(\partial^*E)$.
\end{proof}

In \cite[Example 5.4]{L-Fedchar} it is shown that the assumption
$\mathcal H(\partial^*E\cap U)<\infty$ cannot be removed.

\begin{lemma}\label{lem:fint of IE c}
Let $E\subset X$ be $\mu$-measurable.
Then $\fint I_E^c=\fint O_E$.
\end{lemma}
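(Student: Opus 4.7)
The plan is to reduce the claim on fine interiors to a claim on fine closures via the standard topological duality $\fint A = X \setminus \overline{X \setminus A}^1$, which is valid because the $1$-fine topology really is a topology. Since $X$ partitions as $I_E \sqcup O_E \sqcup \partial^*E$, we have $(I_E^c)^c = I_E$ and $O_E^c = I_E \cup \partial^*E$, so the duality rewrites the desired identity as
\[
\overline{I_E}^1 = \overline{I_E \cup \partial^*E}^1.
\]

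First I would dispatch the inclusion $\overline{I_E}^1 \subset \overline{I_E \cup \partial^*E}^1$ by monotonicity of the fine closure. For the reverse inclusion, I would invoke \eqref{eq:meas th closure belongs to base}, which directly supplies $I_E \cup \partial^*E \subset b_1 I_E \subset \overline{I_E}^1$. Since $\overline{I_E}^1$ is $1$-finely closed and contains $I_E \cup \partial^*E$, the minimality in the definition of the fine closure gives $\overline{I_E \cup \partial^*E}^1 \subset \overline{I_E}^1$, completing the equality. Taking complements in $X$ then yields $\fint O_E = \fint I_E^c$.

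The main conceptual point, rather than a technical obstacle, is the observation that the measure-theoretic boundary $\partial^*E$ contributes nothing to the fine closure of $I_E$; this is precisely the content of \eqref{eq:meas th closure belongs to base}, ultimately resting on the inclusion $I_A \cup \partial^*A \subset b_1 A$ from \eqref{eq:cap thickness points contain measure thickness points}. Once that is cited, the proof is purely topological and set-theoretic, and no direct capacity estimates are needed.
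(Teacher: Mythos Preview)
Your proof is correct and essentially the same as the paper's: both hinge on \eqref{eq:meas th closure belongs to base} to get $I_E\cup\partial^*E\subset\overline{I_E}^1$, together with the duality $\fint A=X\setminus\overline{A^c}^1$ and monotonicity. The only cosmetic difference is that you dualize both sides to an equality of fine closures $\overline{I_E}^1=\overline{I_E\cup\partial^*E}^1$, whereas the paper shows $\fint O_E\subset\fint I_E^c$ directly from $O_E\subset I_E^c$ and then uses the duality only for the reverse inclusion.
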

\begin{proof}
Note that $O_E\subset I_E^c$ and so $\fint O_E\subset \fint I_E^c$.
Conversely, we have
$\fint I_E^c= X\setminus \overline{I_E}^1$, and
by \eqref{eq:meas th closure belongs to base} we have
$X\setminus \overline{I_E}^1\subset X\setminus (I_E\cup\partial^*E)=O_E$.
Thus $\fint I_E^c\subset O_E$, and so $\fint I_E^c\subset \fint O_E$.
\end{proof}

By using a Lipschitz cutoff function like in the proof
of Lemma \ref{lem:capacity and perimeter}, it is easy to show that
for any ball $B(x,r)$ with $r\le 1$,
\begin{equation}\label{eq:capacity of ball}
\capa_1(B(x,r))\le 2\frac{\mu(B(x,2r))}{r}.
\end{equation}
It follows that for any $A\subset X$,
\begin{equation}\label{eq:capacity and Hausdorff measure}
\capa_1(A)\le 2C_d \mathcal H(A).
\end{equation}

\begin{lemma}\label{lem:set of finite Hausdorff measure quasiclosed}
Let $H\subset X$ be a Borel set with $\mathcal H(H)<\infty$. Then
$X\setminus H$ is a $1$-quasiopen set.
\end{lemma}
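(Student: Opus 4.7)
The plan is to show, for each $\eps>0$, that there exists an open set $W\subset X$ with $\capa_1(W)<\eps$ such that $(X\setminus H)\cup W$ is open; equivalently, that $H\setminus W$ is closed. The key idea is to excise from $H$ a closed subset $F\subset H$ carrying most of the $\mathcal{H}$-mass, so that the leftover $H\setminus F$ has small capacity and can be swallowed by $W$.

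Concretely, since $H$ is Borel with $\mathcal{H}(H)<\infty$, the set function $\mathcal{H}\mres H$ is a finite Borel measure on the metric space $X$, and by the standard regularity theorem for finite Borel measures on metric spaces there exists a closed set $F\subset H$ with $\mathcal{H}(H\setminus F)<\eps/(4C_d)$. The capacity bound \eqref{eq:capacity and Hausdorff measure} then gives $\capa_1(H\setminus F)\le 2C_d\mathcal{H}(H\setminus F)<\eps/2$, and the outer regularity of $\capa_1$ (recorded in Section~\ref{sec:preliminaries}) yields an open set $W\supset H\setminus F$ with $\capa_1(W)<\eps$. Because $H\setminus F\subset W$ forces $H\setminus W\subset F$, while $F\subset H$ forces $F\setminus W\subset H\setminus W$, we obtain $H\setminus W=F\setminus W=F\cap(X\setminus W)$, which is closed as an intersection of two closed sets. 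Consequently $(X\setminus H)\cup W=X\setminus(H\setminus W)$ is open, and since $\eps$ was arbitrary this exhibits $X\setminus H$ as $1$-quasiopen.

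The main obstacle is justifying the inner approximation of the Borel set $H$ by a closed subset of nearly full $\mathcal{H}$-measure; every other step is a direct appeal to tools already developed in the excerpt. This approximation rests on the classical regularity of finite Borel measures on metric spaces, applied to $\mathcal{H}\mres H$. In the proper space $X$ one could in fact demand that $F$ be compact, but this strengthening is not needed.
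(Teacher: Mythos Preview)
Your proof is correct and follows essentially the same argument as the paper: inner-approximate $H$ by a closed set using the regularity of $\mathcal{H}\mres H$, bound the capacity of the remainder via \eqref{eq:capacity and Hausdorff measure}, enlarge to an open set by outer regularity of $\capa_1$, and observe that $(X\setminus H)\cup W=(X\setminus F)\cup W$ is open. The paper cites \cite[Proposition 3.3.37]{HKST15} for the inner approximation step, which is exactly the regularity fact you invoke.
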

\begin{proof}
Let $\eps>0$. We find a closed set $K\subset H$ such that
$\mathcal H(H\setminus K)<(2C_d)^{-1}\eps$ (see e.g. \cite[Proposition 3.3.37]{HKST15}).
By \eqref{eq:capacity and Hausdorff measure}, $\capa_1(H\setminus K)<\eps$,
and then since $\capa_1$ is an outer capacity, we find an open set $G\supset H\setminus K$
such that $\capa_1(G)<\eps$. Now $(X\setminus H)\cup G=(X\setminus K)\cup G$ is an open set. 
\end{proof}

Given a closed set $F\subset X$, one can of course always find open sets
$W_1\supset W_2\supset \ldots\supset F$ such that $\bigcap_{j=1}^{\infty}W_j=F$.
For $1$-quasiopen sets we have the following analog of this fact.

\begin{lemma}\label{lem:using quasi Lindelof}
Let $F\subset X$ such that $X\setminus F$ is $1$-quasiopen. Then there exist open sets
$W_1\supset W_2\supset \ldots\supset F$ such that
\[
\capa_1\left(\bigcap_{j=1}^{\infty}W_j\setminus F\right)=0.
\]
\end{lemma}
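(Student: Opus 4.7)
The plan is to apply the $1$-quasiopenness of $X\setminus F$ countably many times. For each $j\in\N$ I would pick an open set $G_j\subset X$ with $\capa_1(G_j)<2^{-j}$ such that $(X\setminus F)\cup G_j$ is open; then
\[
K_j:=F\setminus G_j=X\setminus\bigl((X\setminus F)\cup G_j\bigr)
\]
is closed. Thus $F$ is decomposed into the closed set $K_j$ together with the small-capacity set $F\cap G_j\subset G_j$, which is essentially the only handle on $F$ that the hypothesis provides.

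For each fixed $j$, the closed set $K_j$ in the metric space $X$ equals the intersection of its open $1/n$-neighborhoods $N_{j,n}:=\{x\in X:\dist(x,K_j)<1/n\}$, which decrease in $n$. Setting $V_{j,n}:=N_{j,n}\cup G_j$ yields a decreasing sequence of open sets containing $K_j\cup G_j\supset F$. A direct case split (on whether $x\in G_j$) shows
\[
\bigcap_{n\in\N}V_{j,n}=K_j\cup G_j=F\cup G_j.
\]
To merge these families across $j$, I would enumerate the countable collection $\{V_{j,n}\}_{j,n\in\N}$ as $V_1',V_2',\ldots$ and take the finite intersections $W_i:=\bigcap_{k=1}^{i}V_k'$. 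Each $W_i$ is open, the sequence $(W_i)$ is decreasing, and each $W_i\supset F$. Since
\[
\bigcap_{i\in\N}W_i=\bigcap_{j,n}V_{j,n}=\bigcap_{j\in\N}(F\cup G_j)=F\cup\bigcap_{j\in\N}G_j,
\]
I obtain $\bigcap_i W_i\setminus F\subset\bigcap_j G_j$, and monotonicity together with $\capa_1(G_j)<2^{-j}$ for every $j$ forces $\capa_1(\bigcap_j G_j)=0$, which is the desired conclusion.

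The main conceptual obstacle is that one cannot hope to exhibit a single open $W\supset F$ with $\capa_1(W\setminus F)$ arbitrarily small: the quasiopenness hypothesis produces an open approximation only of $X\setminus F$, which translates into a closed \emph{inner} approximation $K_j$ of $F$ rather than an open outer one. The remedy is to approximate the closed piece $K_j$ by shrinking open neighborhoods in the parameter $n$ while simultaneously letting the small-capacity defect $G_j$ shrink in $j$; the diagonal intersection then kills the first source of excess by the $G_\delta$ property of closed sets in a metric space and kills the second by the capacity bound on $G_j$.
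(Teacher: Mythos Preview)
Your proof is correct and follows essentially the same approach as the paper's: both use the quasiopenness of $X\setminus F$ to write $F=K_j\cup(F\cap G_j)$ with $K_j$ closed and $\capa_1(G_j)$ small, approximate each $K_j$ from outside by decreasing open sets, add back $G_j$, and then combine across $j$. The only organizational difference is that the paper uses a diagonal definition $W_j:=\bigcap_{k=1}^{j}(V_{kj}\cup G_k)$ in place of your enumeration-and-cumulative-intersection, which leads to the same conclusion.
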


By Lemma \ref{lem:set of finite Hausdorff measure quasiclosed}, $F$
can in particular be any Borel set of finite $\mathcal H$-measure.

\begin{proof}
	For each $j\in\N$ we find an open set $G_j\subset X$ such that
	$F\setminus G_j$ is a closed set, and $\capa_1(G_j)\to 0$.
	Then for each $j\in\N$ we find open sets
	\[
	V_{j1}\supset V_{j2}\supset \ldots\supset F\setminus G_j
	\]
	such that $F\setminus G_j=\bigcap_{i=1}^{\infty}V_{ji}$.
	Define $W_j:=\bigcap_{k=1}^j (V_{kj}\cup G_k)$ for each $j\in\N$.
	These form a decreasing sequence of open sets containing $F$, and for each $N\in\N$,
	\begin{align*}
	\capa_1\left(\bigcap_{j=1}^{\infty}W_j\setminus F\right)
	&\le \capa_1\left(\bigcap_{j=N}^{\infty}W_j\setminus F\right)\\
	&\le \capa_1\left(\bigcap_{j=N}^{\infty}(V_{Nj}\cup G_N)\setminus F\right)\le\capa_1(G_N),
	\end{align*}
	since $\bigcap_{j=N}^{\infty}V_{Nj}=F\setminus G_N$.
Letting $N\to\infty$, we get the result.
\end{proof}

Finally we prove the following absolute continuity.

\begin{lemma}\label{lem:absolute continuity of H wrt capa}
Let $H\subset X$ with $\mathcal H(H)<\infty$. Then for every
$\eps>0$ there exists $\delta>0$ such that if $A\subset X$ with $\capa_1(A)<\delta$,
then $\mathcal H(H\cap A)<\eps$.
\end{lemma}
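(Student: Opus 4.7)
My plan is to argue by contradiction. Suppose the conclusion fails; then there exist $\eps_0>0$ and sets $A_n\subset X$ with $\capa_1(A_n)<2^{-n}$ but $\mathcal H(H\cap A_n)\ge \eps_0$ for every $n\in\N$.

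First, I would use the fact that $\capa_1$ is an outer capacity to replace each $A_n$ by an open set $G_n\supset A_n$ with $\capa_1(G_n)<2^{-n+1}$; then still $\mathcal H(H\cap G_n)\ge \eps_0$. Set $B_N:=\bigcup_{n\ge N} G_n$, so the $B_N$ form a decreasing sequence of open (hence Borel) sets, and countable subadditivity of $\capa_1$ gives $\capa_1(B_N)\le 2^{-N+2}$. For $B:=\bigcap_N B_N$ this yields $\capa_1(B)=0$, and then \eqref{eq:null sets of Hausdorff measure and capacity} gives $\mathcal H(B)=0$.

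Next, because $\mathcal H$ is a Borel regular outer measure, I would choose a Borel set $H'\supset H$ with $\mathcal H(H')=\mathcal H(H)<\infty$. The sets $H'\cap B_N$ are then Borel, decreasing, and $\mathcal H(H'\cap B_1)<\infty$, so by continuity from above of $\mathcal H$ on Borel sets one has $\mathcal H(H'\cap B_N)\to \mathcal H(H'\cap B)\le \mathcal H(B)=0$. On the other hand, for every $N$,
\[
\mathcal H(H'\cap B_N)\ge \mathcal H(H\cap G_N)\ge \mathcal H(H\cap A_N)\ge \eps_0,
\]
which is the desired contradiction, completing the proof.

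The main delicate point is that the candidate sets $A_n$ are not assumed to be Borel or $\mathcal H$-measurable, so one cannot directly apply continuity from above of $\mathcal H$ to $H\cap A_n$. Passing to open supersets (available because $\capa_1$ is an outer capacity) and replacing $H$ by a Borel majorant of the same $\mathcal H$-measure (available because $\mathcal H$ is Borel regular) reduces the situation to the standard continuity-from-above property of a Borel measure on a decreasing sequence of Borel sets of finite initial measure; beyond this, the argument is essentially the routine Borel--Cantelli-style observation that a set function vanishing on $\capa_1$-null sets is automatically $\eps$-$\delta$ absolutely continuous once one has finite total measure.
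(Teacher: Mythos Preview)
Your proof is correct and follows essentially the same approach as the paper's: both argue by contradiction, replace the sets $A_n$ by open supersets using that $\capa_1$ is outer, form the $\limsup$ set $B$, show $\capa_1(B)=0$ hence $\mathcal H(B)=0$, and derive a contradiction via continuity from above of $\mathcal H$ on the decreasing sequence $\bigcup_{n\ge N}G_n$. The only cosmetic difference is that the paper justifies this continuity by noting that $\mathcal H|_H$ is a Borel outer measure (citing \cite[Lemma 3.3.13]{HKST15}), whereas you instead pass to a Borel superset $H'\supset H$ with $\mathcal H(H')=\mathcal H(H)$ via Borel regularity.
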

\begin{proof}
Suppose by contradiction that there exists $\eps>0$ and a sequence
of sets $A_j\subset X$, $j\in\N$, such that $\capa_1(A_j)< 2^{-j}$
but $\mathcal H(H\cap A_j)\ge \eps$.
Since $\capa_1$ is an outer capacity, we can assume that the sets $A_j$ are open.
Then defining
\[
A:=\bigcap_{k=1}^{\infty}\bigcup_{j\ge k}A_j,
\]
we have $\capa_1(A)=0$. However, since the sets $\bigcup_{j\ge k}A_j$ constitute
a decreasing sequence of Borel sets and since the restriction
$\mathcal H|_{H}(\cdot):=\mathcal H(H\cap \cdot)$ is a Borel outer
measure (see e.g. \cite[Lemma 3.3.13]{HKST15}), we have
\[
\mathcal H(H\cap A)=\lim_{k\to\infty}\mathcal H\Big(H\cap \bigcup_{j\ge k}A_j\Big)
\ge\eps,
\]
which is a contradiction
by \eqref{eq:null sets of Hausdorff measure and capacity}.
\end{proof}

\section{Strict subsets}\label{sec:strict subsets}

In this section we study $1$-strict subsets.

\begin{definition}
	A set $A\subset D$ is a \emph{1-strict subset} of $D$ if there is a function
	$\eta\in N_0^{1,1}(D)$ such that $\eta=1$ in $A$.
\end{definition}

By \cite[Proposition 3.3]{L-Fed} we know that if $A$ is a $1$-strict subset of $D$,
then $\capa_1(\overline{A}^1\setminus \fint D)=0$.
Now we show that
this holds also with the ambient space $X$ replaced by a more general quasiopen
set $U$.
Note that $1$-quasiopen sets are $\mu$-measurable
by \cite[Lemma 9.3]{BB-OD}.

\begin{proposition}\label{prop:one direction}
Let $A\subset D$ and let $U\subset X$ be a $1$-quasiopen set,
and suppose that
there exists $\rho\in N_0^{1,1}(D,U)$ with $\rho=1$ in $A\cap U$.
Then
\begin{equation}\label{eq:IE outside fint U}
\capa_1((\overline{A}^1\setminus \fint D)\cap U)=0.
\end{equation}
\end{proposition}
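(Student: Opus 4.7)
The strategy is to exploit the $1$-fine continuity of $\rho$, viewed as a Newton--Sobolev function on the quasiopen set $U$, and then to derive a pointwise contradiction at $1$-q.e.\ point of $(\overline{A}^1\setminus \fint D)\cap U$.

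First, I would observe that $\rho$ extends by $0$ to a function on $U$ (still denoted $\rho$) belonging to $N^{1,1}(U)$, with $\rho\equiv 1$ on $A\cap U$ and $\rho\equiv 0$ on $U\setminus D$. By Theorem \ref{thm:quasicontinuity on quasiopen sets}, $\rho$ is $1$-quasicontinuous on $U$, so Theorem \ref{thm:fine continuity and quasicontinuity equivalence} gives that $\rho$ is finite and $1$-finely continuous at $1$-q.e.\ point of $U$. Moreover, since $U$ is $1$-quasiopen, Theorem \ref{thm:finely open is quasiopen and vice versa} writes $U=V\cup N$ with $V$ $1$-finely open and $\mathcal{H}(N)=0$, hence $\capa_1(N)=0$ by \eqref{eq:null sets of Hausdorff measure and capacity}. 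Therefore, at $1$-q.e.\ $x\in U$ we have $x\in V$, so $X\setminus U$ is $1$-thin at $x$, i.e.\ $U$ is itself a $1$-fine neighborhood of $x$.

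Next, I would combine \eqref{eq:characterization of fine closure} (giving $\overline{A}^1=A\cup b_1 A$) with the identity $X\setminus\fint D=\overline{X\setminus D}^1=(X\setminus D)\cup b_1(X\setminus D)$. Let $x\in(\overline{A}^1\setminus\fint D)\cap U$ be a point at which both preceding properties hold; it suffices to rule out such $x$, as the exceptional set has $\capa_1=0$. Using that $X\setminus U$ is $1$-thin at $x$ together with the elementary fact that a finite union of sets $1$-thin at $x$ is $1$-thin at $x$, I deduce: if $A$ is not $1$-thin at $x$ then neither is $A\cap U$, and if $X\setminus D$ is not $1$-thin at $x$ then neither is $U\setminus D$.

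I then case-analyze on $x$. If $x\in A$ then $\rho(x)=1$, while $U\setminus D$ is not $1$-thin at $x$ (since $x\notin\fint D$ forces $X\setminus D$ to be non-$1$-thin at $x$); but $1$-fine continuity at $x$ makes $\{\rho\le 1/2\}\supset U\setminus D$ be $1$-thin at $x$, a contradiction. If instead $x\in b_1 A\setminus A$, then $A\cap U$ is not $1$-thin at $x$; should $x\in X\setminus D$, then $\rho(x)=0$ yet $\{\rho\ge 1/2\}\supset A\cap U$ would be $1$-thin at $x$ by fine continuity, a contradiction; and if $x\in D\setminus\fint D$, then both $A\cap U\subset\{\rho=1\}$ and $U\setminus D\subset\{\rho=0\}$ are non-$1$-thin at $x$, so no value $\rho(x)\in\mathbb{R}$ can satisfy the fine continuity criterion at both ends simultaneously. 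The main obstacle is the bookkeeping in this last step: one has to carefully transfer thinness statements between $X$ and $U$ using that $U$ is a fine neighborhood of $x$, and keep track of the disjoint decomposition $(X\setminus D)\cup(D\setminus\fint D)$ intersected with $A\cup(b_1A\setminus A)$. Once this is laid out, each case contradicts the definition of $1$-fine continuity directly.
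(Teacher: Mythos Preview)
Your proposal is correct and follows essentially the same approach as the paper: both rely on the $1$-quasicontinuity of $\rho$ on $U$ (hence $1$-fine continuity $1$-q.e.), the characterization $\overline{A}^1=A\cup b_1 A$, and the fact that $1$-q.e.\ point of $U$ lies in $\fint U$. The paper's argument is simply more streamlined---it shows separately that $\rho=1$ $1$-q.e.\ in $\overline{A}^1\cap U$ and $\rho=0$ $1$-q.e.\ in $U\setminus\fint D=\overline{X\setminus D}^1\cap U$, so the intersection has zero capacity---thereby bypassing your case-by-case analysis.
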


\begin{proof}
The function $\rho$ is $1$-quasicontinuous on $U$ by
Theorem \ref{thm:quasicontinuity on quasiopen sets}, and thus
$1$-finely continuous (with respect to the induced $1$-fine topology on $U$) at $1$-q.e. point in $U$
by Theorem \ref{thm:fine continuity and quasicontinuity equivalence}.
Now for $1$-q.e. $x\in \overline{A}^1\cap U$ we have either
$x\in A$ or $x\in b_1 A$ by \eqref{eq:characterization of fine closure},
and also $x\in \fint U$ by
Theorem \ref{thm:finely open is quasiopen and vice versa}.
Then either $x\in A$ or
$x\in b_1 (A\cap U)$. If $\rho$ is $1$-finely continuous
at $x$, it follows that
$\rho(x)=1$. In conclusion, $\rho=1$ $1$-q.e. in $\overline{A}^1\cap U$.

Analogously, from the fact that $\rho=0$ in $U\setminus D$
we get $\rho=0$ $1$-q.e. in $\overline{X\setminus D}^1\cap U=U\setminus \fint D$,
and then
\eqref{eq:IE outside fint U} follows.
\end{proof}

Now we note that the converse to Proposition \ref{prop:one direction}
is not true.
\begin{example}\label{ex:non strict subset}
Let $X=\R^2$ (unweighted, i.e. equipped with the usual $2$-dimensional Lebesgue measure).
We will choose a compact subset $K$ of a $1$-finely
open set $D$ such that $K$ is not a $1$-strict subset of $D$ (note that $K=\overline{K}^1$).
First denote the unit square by $Q:=[0,1]\times [0,1]$.
Define the following ``gratings'' that are compact subsets of $Q$:
\[
H_j:=\bigcup_{k=0}^{2^j} \{k 2^{-j}\}\times [0,1],\quad j\in\N.
\]
Given any set $A\subset \R^2$ and $a>0$, $b\in\R^2$, scaling and translation are given by
\[
aA+b:=\{ax+b:\,x\in A\}.
\]
Now consider the complement of the union of scaled and shifted ``gratings''
\[
D:=\R^2\setminus \bigcup_{j=1}^{\infty}(2^{-2j}H_{2j}+(2^{-j},0))
\]
All points in $D$ are interior points except the origin $0$.
We note that for every $r>0$ and every set
$2^{-2j}H_{2j}+(2^{-j},0)$ that intersects $B(0,r)$,
we have
\[
\rcapa_1(2^{-2j}H_{2j}+(2^{-j},0),B(0,2r))\le \rcapa_1(2^{-2j}Q,B(0,2r))= 2^{-4j}.
\]
It follows that for every $0<r<1/4$ ($\mathcal L^2$ denotes the $2$-dimensional Lebesgue measure,
and $\lfloor a \rfloor$ is the largest integer at most $a\in\R$)
\begin{align*}
&r\frac{\rcapa_1(B(0,r)\setminus D,B(0,2r))}{\mathcal L^2(B(0,r))}\\
&\qquad\qquad\le \frac{r}{\mathcal L^2(B(0,r))}\sum_{j=\lfloor-\log r/\log 2\rfloor-1}^{\infty}\rcapa_1(2^{-2j}H_{2j}+(2^{-j},0),B(0,2r))\\
&\qquad\qquad\le \frac{r}{\mathcal L^2(B(0,r))}\sum_{j=\lfloor-\log r/\log 2\rfloor-1}^{\infty}2^{-4j}\\
&\qquad\qquad\le \frac{2r}{\mathcal L^2(B(0,r))}2^{-4\lfloor-\log r/\log 2\rfloor+4}\\
&\qquad\qquad\le \frac{2^9 r}{\mathcal L^2(B(0,r))}r^4\to 0\quad\textrm{as }r\to 0.
\end{align*}
Thus the set $D$ is $1$-finely open.
Now define
\[
K_j:=\bigcup_{k=0}^{2^j-1} \{(k+1/2) 2^{-j}\}\times [0,1],\quad j\in\N,
\]
which are also compact subsets of the unit square, and
\[
K:=\bigcup_{j=1}^{\infty}(2^{-2j}K_{2j}+(2^{-j},0))\cup\{0\},
\]
which is a compact subset of $D$.
Let $u\in N_0^{1,1}(D)$ be a function with $u=1$ in $K$, and let $g$ be any
upper gradient of $u$. Now for every $j\in\N$,
\[
\Vert u\Vert_{N^{1,1}(2^{-2j}Q+(2^{-j},0))}\ge \int_{2^{-2j}Q+(2^{-j},0)}g\,d\mathcal L^2
\ge 2^{-2j}\cdot 2\cdot (2^{2j}-1)\ge 1.
\]
Thus $\Vert u\Vert_{N^{1,1}(\R^2)}=\infty$, and so $K$ is not a $1$-strict subset of $D$.
\end{example}

In \cite[Theorem 4.3]{L-NC} it is shown that when $A$
is a point in $\fint D$, then $A$ is a $1$-strict subset of $D$.
Now our goal will be to show that despite Example \ref{ex:non strict subset},
there are many other $1$-strict
subsets $A$ of $D$.
Our first result in this direction is the following.

\begin{lemma}\label{lem:Sobolev functions for finite Hausdorff measure 1}
Let $W\subset  X$ be an open set, let
$H\subset W$ with
$\mathcal H(H)<\infty$, and
let $\eps>0$.
Then there exists
$\eta\in N_0^{1,1}(W)$ with $0\le\eta\le 1$ on $X$, $\eta=1$ in
a neighborhood of $H$, and
\[
\int _X \eta\,d\mu< \eps\quad\textrm{and}\quad
\int_X g_{\eta}\,d\mu< C_d\mathcal H(H)+\eps.
\]
Moreover, $\capa_1(\{\eta>0\})<2C_d^2(\mathcal H(H)+\eps)$
and $\capa_1(\overline{\{\eta>0\}}^1\setminus W)=0$.
\end{lemma}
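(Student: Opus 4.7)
The plan is to repeat the construction from the proof of Lemma \ref{lem:capacity and perimeter}---a pointwise supremum of Lipschitz cutoff functions sitting on a fine covering of $H$---but to start from the definition of the codimension-one Hausdorff measure rather than from the $\BV$-capacity, and to arrange that the support of every cutoff stays inside the open set $W$. The main obstacle is precisely this containment: points of $H$ may cluster against $X\setminus W$, so a single global scale of cover cannot work, and a dyadic stratification of $H$ by its distance to $X\setminus W$ will be required.

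Concretely, I would first partition $H$ into the disjoint Borel pieces $H_k:=\{x\in H:\,2^{-k}\le\dist(x,X\setminus W)<2^{-k+1}\}$ for $k\ge 1$, together with $H_0:=\{x\in H:\,\dist(x,X\setminus W)\ge 1\}$ (the convention $\dist(\cdot,\emptyset)=\infty$ handling the case $W=X$), so that $\sum_k\mathcal H(H_k)\le\mathcal H(H)<\infty$. For small parameters $\delta,R_0>0$ to be fixed at the end, the definition of $\mathcal H$ furnishes, for each $k$, a cover of $H_k$ by balls $\{B(x_j^k,r_j^k)\}_j$ with $r_j^k<\min\{R_0,2^{-k}/4\}$ and $\sum_j\mu(B(x_j^k,r_j^k))/r_j^k<\mathcal H(H_k)+2^{-k}\delta$. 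Discarding balls that miss $H_k$, each center lies within $r_j^k$ of a point of $H_k$, which forces $B(x_j^k,3r_j^k)\subset W$---the slack needed in the last step.

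Next I would set $\eta_j^k(x):=\max\{0,1-\dist(x,B(x_j^k,r_j^k))/r_j^k\}$ and $\eta:=\sup_{j,k}\eta_j^k$, so that $0\le\eta\le 1$ on $X$, $\eta=1$ on the open neighborhood $\bigcup_{j,k}B(x_j^k,r_j^k)$ of $H$, and $\{\eta>0\}=\bigcup_{j,k}B(x_j^k,2r_j^k)\subset W$. Since $\ch_{B(x_j^k,2r_j^k)}/r_j^k$ is a $1$-weak upper gradient of $\eta_j^k$ by \eqref{eq:upper gradient in coincidence set}, Lemma \ref{lem:sup is upper gradient} yields $g_\eta\le\sum_{j,k}\ch_{B(x_j^k,2r_j^k)}/r_j^k$. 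Doubling, subadditivity of capacity and \eqref{eq:capacity of ball} then produce the three bounds
\[
\int_X g_\eta\,d\mu<C_d(\mathcal H(H)+2\delta),\quad \int_X\eta\,d\mu\le C_dR_0(\mathcal H(H)+2\delta),\quad \capa_1(\{\eta>0\})<C_d^2(\mathcal H(H)+2\delta),
\]
and the three quantitative assertions of the lemma follow by first choosing $\delta<\eps/(2C_d)$ and then $R_0$ small in terms of $\mathcal H(H)$, $\eps$ and $C_d$.

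For the final claim $\capa_1(\overline{\{\eta>0\}}^1\setminus W)=0$, I would invoke Proposition \ref{prop:one direction} with $U=X$, $D=W$ and $A=\{\eta>0\}$. The required test function $\rho\in N_0^{1,1}(W)$ with $\rho=1$ on $\{\eta>0\}$ is obtained by repeating the construction with enlarged cutoffs $\tilde\eta_j^k(x):=\max\{0,1-\dist(x,B(x_j^k,2r_j^k))/r_j^k\}$, whose supports $B(x_j^k,3r_j^k)$ still lie in $W$ thanks to the stratification; the same estimate shows $\rho:=\sup_{j,k}\tilde\eta_j^k$ lies in $N^{1,1}(X)$. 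Since $W$ is open (hence $1$-finely open, so $\fint W=W$), Proposition \ref{prop:one direction} delivers exactly $\capa_1(\overline{\{\eta>0\}}^1\setminus W)=0$, finishing the proof.
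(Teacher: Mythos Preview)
Your proof is correct and follows essentially the same construction as the paper: a dyadic stratification of $H$ by distance to $X\setminus W$, followed by a fine Hausdorff-measure cover of each stratum and a supremum of Lipschitz cutoffs. The one genuine variation is in the final claim $\capa_1(\overline{\{\eta>0\}}^1\setminus W)=0$: the paper argues directly via a tail estimate (the balls in strata $j<N$ have ordinary closure in $W$, and the remaining tail has capacity $\to 0$ as $N\to\infty$), whereas you build an enlarged test function $\rho\in N_0^{1,1}(W)$ with $\rho=1$ on $\{\eta>0\}$ and invoke Proposition~\ref{prop:one direction}; this is a clean shortcut that reuses an earlier result in place of a separate computation. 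One cosmetic point: $\{\eta>0\}$ is contained in $\bigcup_{j,k}B(x_j^k,2r_j^k)$ but need not equal it in a general metric space; only the inclusion is needed, and you use only that.
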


\begin{proof}
Let
\[
W_{\delta}:=\{x\in W:\, \dist(x,X\setminus W)>\delta\},\quad \delta>0.
\]
Let $V_1:= W_{2^{-1}}$ and for each $j=2,3,\ldots$,
let $V_j:= W_{2^{-j}}\setminus W_{2^{-j+1}}$. Then $W=\bigcup_{j=1}^{\infty}V_j$.
For each $j\in\N$, take a collection of balls
$\{B_{jk}=B(x_{jk},r_{jk})\}_{k=1}^{\infty}$ covering
$H\cap V_j$ with
$r_{jk}\le 2^{-j-2}\eps(C_d\mathcal H(H)+\eps+1)^{-1}$ and
\begin{equation}\label{eq:choice of balls Bjk}
\sum_{k=1}^{\infty}\frac{\mu(B_{jk})}{r_{jk}}<\mathcal H(H\cap V_j)
+\frac{2^{-j}\eps}{C_d}.
\end{equation}
We can assume that $B_{jk}\cap (H\cap V_j)\neq \emptyset$ for all $k\in\N$.
Define Lipschitz functions
\[
\eta_{jk}:=\max\left\{0,1-\frac{\dist(\cdot,B_{jk})}{r_{jk}}\right\},\quad j,k\in\N,
\]
so that $\eta_{jk}=1$ in $B_{jk}$ and $\eta_{jk}=0$ in $X\setminus 2 B_{jk}$.
By \eqref{eq:upper gradient in coincidence set}, each
$\eta_{jk}$ has a $1$-weak upper gradient $\ch_{2B_{jk}}/r_{jk}$.
Let $\eta:=\sup_{j,k\in\N}\eta_{jk}$.
Then $\eta=1$ in a neighborhood of $H$ and $\eta=0$ in $X\setminus W$.
By Lemma \ref{lem:sup is upper gradient},
\begin{equation}\label{eq:g eta estimate by H H}
\begin{split}
\int_X g_{\eta}\,d\mu
\le \sum_{j,k=1}^{\infty}\int_X g_{\eta_{jk}}\,d\mu
&\le \sum_{j,k=1}^{\infty}\frac{\mu(2 B_{jk})}{r_{jk}}\\
&< C_d\sum_{j=1}^{\infty}\left(\mathcal H(H\cap V_j)+\frac{2^{-j}\eps}{C_d}\right)\quad\textrm{by }\eqref{eq:choice of balls Bjk}\\
&=C_d\mathcal H(H)+\eps,
\end{split}
\end{equation}
as desired.
Similarly,
\[
\int_X \eta\,d\mu
\le \sum_{j,k=1}^{\infty}\mu(2 B_{jk})
\le \eps (C_d\mathcal H(H)+\eps+1)^{-1}\sum_{j,k=1}^{\infty}\frac{\mu(2 B_{jk})}{r_{jk}}< \eps.
\]
In conclusion, $\eta\in N^{1,1}(X)$ and then in fact $\eta\in N_0^{1,1}(W)$.

We can define a function $\rho$ analogously to $\eta$, but using
the collections of balls $\{2B_{jk}\}_{k=1}^{\infty}$
in place of $\{B_{jk}\}_{k=1}^{\infty}$.
We obtain $\rho=1$ in $\{\eta>0\}$ and then
\[
\capa_1\left(\{\eta>0\}\right)
\le \int_X \rho\,d\mu+\int_X g_{\rho}\,d\mu
\le 2C_d^2(\mathcal H(H)+\eps).
\]
Moreover, by the characterization of the fine closure
\eqref{eq:characterization of fine closure}, we get
$\overline{\{\eta>0\}}^1\setminus W
\subset\overline{\bigcup_{j=N}^{\infty}\bigcup_{k=1}^{\infty}2B_{jk}}^1$ 
for any $N\in\N$,
and so
\begin{align*}
\capa_1(\overline{\{\eta>0\}}^1\setminus W)
&\le \capa_1\left(\overline{\bigcup_{j=N}^{\infty}\bigcup_{k=1}^{\infty}2B_{jk}}\right)\\
&=\capa_1\left(\bigcup_{j=N}^{\infty}\bigcup_{k=1}^{\infty}2B_{jk}\right)
\quad\textrm{by }\eqref{eq:capacity of fine closure}\\
&\le 2\sum_{j=N}^{\infty}\sum_{k=1}^{\infty}\frac{\mu(4B_{jk})}{2r_{jk}}\quad\textrm{by }\eqref{eq:capacity of ball}\\
&\to 0
\end{align*}
as $N\to \infty$, since we had
$\sum_{j,k=1}^{\infty}\mu(2 B_{jk})/r_{jk}<\infty$
by \eqref{eq:g eta estimate by H H}.
\end{proof}

\begin{lemma}[{\cite[Lemma 3.3]{L-LSC}}]\label{lem:capacity and Newtonian function and bigger set}
	Let $G\subset X$ and $\eps>0$. Then there exists an open set 
	$G'\supset G$ with $\capa_1(G')< C_1(\capa_1(G)+\eps)$ and a
	function $\rho\in N^{1,1}_0(G')$ with $0\le\rho\le 1$ on $X$, $\rho=1$ in $G$, and $\Vert \rho\Vert_{N^{1,1}(X)}< C_1(\capa_1(G)+\eps)$,
	for some constant $C_1=C_1(C_d,C_P,\lambda)\ge 1$.
\end{lemma}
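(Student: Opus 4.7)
The plan is to emulate the construction used in the proof of Lemma \ref{lem:capacity and perimeter}, but to keep the support of the cutoff function visible as a single union of balls that can serve as the open set $G'$, and then to bound $\capa_1(G')$ by producing a \emph{second} cutoff function subordinate to slightly larger balls.

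First, we may assume $\capa_1(G)<\infty$ (otherwise the statement is trivial). Using \eqref{eq:Newtonian and BV capacities are comparable} and \cite[Lemma 3.2]{HaKi}, we pick a set $E\subset X$ containing an open neighborhood of $G$ such that $\mu(E)+P(E,X)<\capa_1(G)+\eps$. Since $G\subset\inte E\subset I_E$, the boxing inequality \cite[Lemma 4.2]{HaKi} supplies balls $\{B_j=B(x_j,r_j)\}_{j\in\N}$ with $r_j\le 1$ covering $G$ and satisfying $\sum_j \mu(B_j)/r_j\le C_B(\mu(E)+P(E,X))$. Then, exactly as in Lemma \ref{lem:capacity and perimeter}, we choose $s_j\in[r_j,2r_j]$ via the coarea formula \eqref{eq:coarea} so that \eqref{eq:perimeter estimate xj sj rj} holds, and for each $j$ we define the Lipschitz function
\[
\rho_j(\cdot):=\max\left\{0,1-\frac{\dist(\cdot,B(x_j,s_j))}{s_j}\right\},
\]
which equals $1$ on $B(x_j,s_j)$ and vanishes outside $B(x_j,2s_j)$. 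By \eqref{eq:upper gradient in coincidence set}, $\ch_{B(x_j,2s_j)}/s_j$ is a $1$-weak upper gradient of $\rho_j$.

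Now set $\rho:=\sup_{j\in\N}\rho_j$ and $G':=\bigcup_{j\in\N}B(x_j,2s_j)$. The set $G'$ is open, contains $G\subset\bigcup_j B_j\subset\bigcup_j B(x_j,s_j)$, and $\rho=0$ outside $G'$ and $\rho=1$ on $G$. By Lemma \ref{lem:sup is upper gradient} the minimal $1$-weak upper gradient of $\rho$ is dominated by $\sum_j \ch_{B(x_j,2s_j)}/s_j$, and the doubling condition together with our choice of balls gives
\[
\int_X \rho\,d\mu+\int_X g_{\rho}\,d\mu
\le (1+C_d^2)\sum_{j}\frac{\mu(B_j)}{r_j}
\le C(\capa_1(G)+\eps)
\]
with $C=C(C_d,C_P,\lambda)$, so $\rho\in N_0^{1,1}(G')$ with the desired norm bound.

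It remains to estimate $\capa_1(G')$. For this we run the same construction at a larger scale: let
\[
\widetilde{\rho}_j(\cdot):=\max\left\{0,1-\frac{\dist(\cdot,B(x_j,2s_j))}{2s_j}\right\},
\]
which equals $1$ on $B(x_j,2s_j)$, vanishes outside $B(x_j,4s_j)$, and has $\ch_{B(x_j,4s_j)}/(2s_j)$ as a $1$-weak upper gradient. Then $\widetilde{\rho}:=\sup_j \widetilde{\rho}_j\ge 1$ on $G'$, and again Lemma \ref{lem:sup is upper gradient} plus doubling gives $\Vert\widetilde{\rho}\Vert_{N^{1,1}(X)}\le C'(\capa_1(G)+\eps)$. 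Hence $\capa_1(G')\le\Vert\widetilde{\rho}\Vert_{N^{1,1}(X)}\le C'(\capa_1(G)+\eps)$, and taking $C_1$ to be the larger of the two constants completes the proof. The main obstacle, such as it is, is the standard two-scale trick in the last paragraph: the cutoff $\rho$ that handles the Newton–Sobolev norm does not reach height $1$ on all of its open support, so a separate companion function at a slightly larger scale is needed in order to majorize $\ch_{G'}$ and thereby bound $\capa_1(G')$.
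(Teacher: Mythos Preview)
The paper does not give its own proof of this lemma; it is simply quoted from \cite[Lemma 3.3]{L-LSC}. Your argument is correct and is essentially the same construction as the paper's proof of Lemma \ref{lem:capacity and perimeter}, with the extra two-scale step at the end to control $\capa_1(G')$. Two minor remarks: the coarea step selecting $s_j\in[r_j,2r_j]$ so that \eqref{eq:perimeter estimate xj sj rj} holds is superfluous here, since you never use the perimeter bound---you could just take $s_j=r_j$; and the constant $(1+C_d^2)$ in your displayed estimate should really be $2C_d^2$ (you need $r_j\le 1$ to pass from $\mu(B(x_j,2s_j))$ to $\mu(B_j)/r_j$), but this is harmless for the conclusion.
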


The following proposition says that a subset of finite Hausdorff measure
of a $1$-quasiopen set is always a $1$-strict subset.

\begin{proposition}\label{prop:Sobolev functions for finite Hausdorff measure 2}
Let $U\subset  X$ be $1$-quasiopen and let $F\subset U$ with
 $\mathcal H(F)<\infty$. Let $0<\eps<1$. Then there exists
$\eta\in N_0^{1,1}(U)$ with $0\le\eta\le 1$ on $X$, $\eta=1$ in a
$1$-quasiopen set containing $F$, and
\[
\int _X \eta\,d\mu< \eps\quad\textrm{and}\quad
\int_X g_{\eta}\,d\mu<C_d\mathcal H(F)+\eps.
\]
Moreover, $\eta=0$ in a $1$-quasiopen set containing $X\setminus U$.
\end{proposition}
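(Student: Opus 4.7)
My plan is to reduce the statement to the open-set case already treated in Lemma~\ref{lem:Sobolev functions for finite Hausdorff measure 1}, and then to truncate the resulting function so that it vanishes outside $U$. I would first fix a small $\delta > 0$ (eventually $\delta < \eps/(12 C_1)$) and use $1$-quasiopenness of $U$ to find an open $G \subset X$ with $\capa_1(G) < \delta$ and $W := U \cup G$ open. Applying Lemma~\ref{lem:Sobolev functions for finite Hausdorff measure 1} to $(W,F)$ with parameter $\eps/3$ would produce $\eta_0 \in N_0^{1,1}(W)$ with $0 \le \eta_0 \le 1$, $\eta_0 = 1$ on an open neighborhood $N$ of $F$, $\int_X \eta_0\,d\mu < \eps/3$, $\int_X g_{\eta_0}\,d\mu < C_d \mathcal H(F) + \eps/3$, and $\capa_1(\overline{\{\eta_0 > 0\}}^1 \setminus W) = 0$. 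Applying Lemma~\ref{lem:capacity and Newtonian function and bigger set} to $G$ would produce an open $G' \supset G$ with $\capa_1(G') < 2C_1\delta$ and $\rho_0 \in N_0^{1,1}(G')$ with $\rho_0 = 1$ on $G$ and $\|\rho_0\|_{N^{1,1}(X)} < 2 C_1 \delta$.

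Next I would truncate $\rho_0$ to $\bar\rho := \min\{1, 2\rho_0\}$, so that $\bar\rho = 1$ on $\{\rho_0 \ge 1/2\} \supset G$ while the sub-level set $\{\rho_0 < 1/2\}$ is $1$-quasiopen by the quasi-continuity of $\rho_0$ (Theorem~\ref{thm:quasicontinuity on quasiopen sets}). Then I would define $\eta := \max\{\eta_0 - \bar\rho, 0\}$; $0 \le \eta \le 1$, and since $g_\eta \le g_{\eta_0} + g_{\bar\rho}$ the bounds $\int_X \eta\,d\mu < \eps$ and $\int_X g_\eta\,d\mu < C_d \mathcal H(F) + \eps$ follow from the smallness of $\delta$. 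To see $\eta \in N_0^{1,1}(U)$, I split $X \setminus U = (X \setminus W) \cup (G \setminus U)$: on the first $\eta_0 = 0$, on the second $\bar\rho = 1 \ge \eta_0$. For the $1$-quasiopen set containing $F$ on which $\eta = 1$, I would take $V_F := N \cap \{\rho_0 < 1/2\}$ (open $\cap$ quasiopen). For the $1$-quasiopen set containing $X \setminus U$ on which $\eta = 0$, I would use the $1$-fine interior of $\{\eta = 0\}$: since $\{\eta > 0\} \subset X \setminus G$ topologically (as $\bar\rho = 1$ on the open set $G$) and $\overline{\{\eta_0 > 0\}}^1 \subset W$ up to a $\capa_1$-null set, the fine closure $\overline{\{\eta > 0\}}^1$ lies in $U$ up to a $\capa_1$-null set $N_1$, so $V_X := (X \setminus \overline{\{\eta > 0\}}^1) \cup (N_1 \cap (X \setminus U))$ is a $1$-quasiopen set of the desired form (union of a $1$-finely open set with a $\capa_1$-null set).

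The main obstacle will be the literal containment $F \subset V_F$, since $F$ may meet $G'$. The residual set $F \cap \{\rho_0 \ge 1/2\}$ has $\capa_1$ bounded by $4 C_1 \delta$ by a Chebyshev-type bound on $\rho_0$, hence arbitrarily small $\mathcal H$-measure via Lemma~\ref{lem:absolute continuity of H wrt capa}; the resolution will be either to refine the choice of $G$ (exploiting that $F$ is contained in $U$ and has finite Hausdorff measure) so that this residual set is empty from the outset, or to iterate the construction with $\delta_k \to 0$ and then absorb the $\capa_1$-null residue into $V_F$, using that the union of a $1$-quasiopen set and a $\capa_1$-null set is still $1$-quasiopen.
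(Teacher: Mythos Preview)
Your overall strategy---reduce to the open-set Lemma~\ref{lem:Sobolev functions for finite Hausdorff measure 1}, cut off with a capacitary bump supported near $G$, then iterate to handle the residual---is the same as the paper's. But two points need correction.

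First, your set $V_F := N \cap \{\rho_0 < 1/2\}$ is \emph{not} where $\eta = 1$. With $\eta = \max\{\eta_0 - \bar\rho,0\}$ and $\bar\rho = \min\{1,2\rho_0\}$, you get $\eta = 1$ only where $\eta_0 = 1$ and $\rho_0 = 0$; on $N \cap \{0 < \rho_0 < 1/2\}$ you have $\eta = 1 - 2\rho_0 < 1$. The correct $1$-quasiopen set on which $\eta = 1$ is $N \setminus \overline{G'}^1$ (which is $1$-finely open, hence $1$-quasiopen by Theorem~\ref{thm:finely open is quasiopen and vice versa}), and the residual is $F \cap \overline{G'}^1$, which has small capacity by \eqref{eq:capacity of fine closure} and hence small $\mathcal H$-measure by Lemma~\ref{lem:absolute continuity of H wrt capa}. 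This is exactly the residual the paper tracks.

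Second, resolution (a) is not available in general: if $x \in F \subset U$ is not an interior point of $U$ in the metric topology, then every open $G$ with $U \cup G$ open must contain $x$, so $F \cap G \neq \emptyset$ is forced. You must go with (b).

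The paper carries out (b) explicitly from the start: for each $j$ it picks $G_j$ with $\capa_1(G_j)$ geometrically small, applies Lemma~\ref{lem:Sobolev functions for finite Hausdorff measure 1} to the pair $(U \cup G_j,\, F \cap \overline{G_{j-1}'}^1)$, multiplies by $(1-\rho_j)$ rather than subtracting, and takes $\eta = \sup_j \eta_j'$. The product form is slightly cleaner for the Leibniz estimate, and the paper uses the bound $\capa_1(\{\eta_j>0\})$ from Lemma~\ref{lem:Sobolev functions for finite Hausdorff measure 1} to control $\overline{\{\eta>0\}}^1 \setminus U$ after taking the supremum---a step your plan gestures at but would need to make precise once you iterate. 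With the corrected $V_F$ and the iteration spelled out, your argument and the paper's are essentially the same.
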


\begin{proof}
For each $j\in\N$, by Lemma \ref{lem:absolute continuity of H wrt capa}
there exists $0<\delta_j<1$ such that if $A\subset X$ with $\capa_1(A)<\delta_j$,
then $\mathcal H(F\cap A)<2^{-j-3}\eps/C_d^2$.
For each $j\in\N$ we find an open set $G_j\subset X$ such that
$U\cup G_j$ is open and $\capa_1(G_j)< 2^{-j-1}\eps\delta_j/C_1$.
By Lemma \ref{lem:capacity and Newtonian function and bigger set}
we then find an open set $G_j'\supset G_j$ with $\capa_1(G_j')<2^{-j-1}\eps\delta_j$
and a function $\rho_j\in N_0^{1,1}(G_j')$ such that
$0\le \rho_j\le 1$ on $X$, $\rho_j=1$ in $G_j$,
and $\Vert \rho_j\Vert_{N^{1,1}(X)}<2^{-j-1} \eps\delta_j$.

By \eqref{eq:capacity of fine closure}, also
$\capa_1(\overline{G_j'}^1)<2^{-j-1}\eps\delta_j$ for each $j\in\N$, and so
$\mathcal H(F\cap \overline{G_j'}^1)<2^{-j-3}\eps/C_d^2$.
Let also $G_0':=X$.
For each $j\in\N$, apply
Lemma \ref{lem:Sobolev functions for finite Hausdorff measure 1} with
the choices $H=F\cap \overline{G_{j-1}'}^1$ and
$W=U\cup G_{j}$ to find
a function $\eta_j\in N_0^{1,1}(U\cup G_{j})$
such that $0\le \eta_j\le 1$ on $X$,
$\eta_j=1$ in an open set $W_j\supset F\cap \overline{G_{j-1}'}^1$, and
\[
\int_X \eta_j\,d\mu<2^{-j}\eps\quad\textrm{and}\quad\int_X g_{\eta_j}\,d\mu< 
C_d\mathcal H \left(F\cap \overline{G_{j-1}'}^1\right)+2^{-j-2}\eps.
\]
Moreover, Lemma \ref{lem:Sobolev functions for finite Hausdorff measure 1}
further gives for $j=2,3,\ldots$ (note that the $\eps$ in that lemma can be chosen as small
as needed)
\begin{equation}\label{eq:capacity of eta j set}
\capa_1(\{\eta_j>0\})<2C_d^2(\mathcal H(F\cap \overline{G_{j-1}'}^1)+2^{-j-2}\eps/C_d^2)
\le 2^{-j-1}\eps+2^{-j-1}\eps= 2^{-j}\eps.
\end{equation}
Now
\[
\int_X g_{\eta_1}\,d\mu< C_d\mathcal H(F)+2^{-3}\eps
\]
and for $j=2,3\ldots$,
\[
\int_X g_{\eta_j}\,d\mu< C_d \mathcal H\big(F\cap \overline{G_{j-1}'}^1\big)
+2^{-j-2}\eps<2^{-j-2}\eps+2^{-j-2}\eps=  2^{-j-1}\eps.
\]
Then let $\eta'_j:=\eta_j(1-\rho_j)$ for each $j\in\N$.
Now we have
\begin{equation}\label{eq:eta prime one}
\eta'_j=1 \textrm{ in }W_j\setminus G_j'\supset F\cap \overline{G_{j-1}'}^1\setminus G_j',
\end{equation}
$\eta_j'=0$ in $X\setminus U$, and
by the Leibniz rule \cite[Theorem 2.15]{BB},
\begin{equation}\label{eq:estimate for N11 norm of eta 1 prime}
\int_{X} g_{\eta_1'}\,d\mu
\le \int_{X} g_{\eta_1}\,d\mu +\int_{X} g_{\rho_1}\,d\mu
< C_d\mathcal H(F)+2^{-1}\eps
\end{equation}
and for $j=2,3\ldots$,
\begin{equation}\label{eq:estimate for N11 norm of eta j prime}
\int_{X} g_{\eta_j'}\,d\mu
\le \int_{X} g_{\eta_j}\,d\mu +\int_{X} g_{\rho_j}\,d\mu
< 2^{-j-1}\eps+2^{-j-1}\eps=2^{-j}\eps.
\end{equation}
Also,
\[
\Vert \eta_j'\Vert_{L^1(X)}\le \Vert \eta_j\Vert_{L^1(X)}< 2^{-j}\eps
\]
for all $j\in\N$.
Then $\eta_j'\in N_0^{1,1}(U)$ for all $j\in\N$.
Let $\eta:=\sup_{j\in\N}\eta_j'$.
By \eqref{eq:eta prime one} we have $\eta=1$ in the
set $\bigcup_{j=1}^{\infty}(W_j\setminus \overline{G_j'}^1)$,
which is $1$-finely open and
contains $1$-quasi all of $F$ since we had $\capa_1(\overline{G_j'}^1)\to 0$.
Then by \eqref{eq:quasieverywhere equivalence class} we can redefine $\eta=1$ in $F$; by Theorem \ref{thm:finely open is quasiopen and vice versa}
we now have that $\eta=1$ in a $1$-quasiopen
set containing $F$.
Moreover, by Lemma \ref{lem:sup is upper gradient},
\eqref{eq:estimate for N11 norm of eta 1 prime}, and
\eqref{eq:estimate for N11 norm of eta j prime} we find that
\[
\int_X g_{\eta}\,d\mu
\le \sum_{j=1}^{\infty}\int_X g_{\eta_j'}\,d\mu
< C_d \mathcal H(F)+\eps,
\]
and we also have
\[
\int_X \eta\,d\mu\le \sum_{j=1}^{\infty}\int_X \eta_j'\,d\mu< \eps.
\]
Thus
$\eta\in N^{1,1}(X)$. Clearly also
$\eta=0$ in $X\setminus U$, so that $\eta\in N_0^{1,1}(U)$.

Finally we show that $\eta=0$ in a $1$-quasiopen set containing $X\setminus U$.
Fix $\delta>0$.
From Lemma \ref{lem:Sobolev functions for finite Hausdorff measure 1} we
had that for every $j\in\N$,
\[
\capa_1(\overline{\{\eta_j'>0\}}^1\setminus (U\cup G_j))
\le\capa_1(\overline{\{\eta_j>0\}}^1\setminus (U\cup G_j))=0,
\]
and then since $\eta_j'=0$ in the open set $G_j$,
\begin{equation}\label{eq:capacity outside U is zero}
\capa_1(\overline{\{\eta_j'>0\}}^1\setminus U)=0.
\end{equation}
For $N=2,3,\ldots$ we have by \eqref{eq:capacity of eta j set}
\[
\sum_{j=N}^{\infty}\capa_1\left(\{\eta'_j>0\}\right)
\le  \sum_{j=N}^{\infty}\capa_1\left(\{\eta_j>0\}\right)
\le  \sum_{j=N}^{\infty} 2^{-j}\eps=2^{-N+1}\eps<\delta
\]
for large enough $N$.
Then by \eqref{eq:capacity of fine closure},
also $\capa_1\left(\overline{\bigcup_{j=N}^{\infty}\{\eta'_j>0\}}^1\right)<\delta$.
Now by the characterization \eqref{eq:characterization of fine closure}, we see that
\[
\overline{\{\eta>0\}}^1
\subset \bigcup_{j=1}^{N-1} \overline{\{\eta_j'>0\}}^1\cup 
\overline{\bigcup_{j=N}^{\infty}\{\eta_j'>0\}}^1,
\]
and so by \eqref{eq:capacity outside U is zero},
\[
\capa_1\left(\overline{\{\eta>0\}}^1\setminus U\right)
\le \capa_1\left(\overline{\bigcup_{j=N}^{\infty}\{\eta'_j>0\}}^1\right)<\delta.
\]
Since $\delta>0$ was arbitrary, we have
$\capa_1\left(\overline{\{\eta>0\}}^1\setminus U\right)=0$.
The set
$X\setminus \overline{\{\eta>0\}}^1$ is $1$-finely open, and
then by Theorem \ref{thm:finely open is quasiopen and vice versa},
$X\setminus (\overline{\{\eta>0\}}^1\cap U)$
is $1$-quasiopen. Thus $\eta=0$ in a $1$-quasiopen set containing $X\setminus U$.
\end{proof}

Now we wish to show, essentially, that the converse to Proposition
\ref{prop:one direction} holds when
$A$ is a set of finite perimeter. Note that a set of finite perimeter can
be perturbed in any set of $\mu$-measure zero without changing the perimeter.
However, a set of $\mu$-measure zero may well have an effect on
Newton-Sobolev norms; in Example \ref{ex:non strict subset} we have
$\mathcal L^2(K)=0$ and so $P(K,\R^2)=0$, but
$K$ was not a $1$-strict subset of $U$.

For this reason, we always need to consider a reasonable representative
of a set of finite perimeter $E$.
We choose this representative to be the measure-theoretic interior $I_E$,
as defined in \eqref{eq:measure theoretic interior};
note that by Lebesgue's differentiation theorem,
we indeed have $\mu(I_E\Delta E)=0$.

The proof of the following lemma can be found e.g. in \cite[Lemma 1.34]{BB}.

\begin{lemma}\label{lem:subcurves}
If $\Gamma$ and $\Gamma'$ are families of curves such that for every
$\gamma\in \Gamma$ there exists a subcurve $\gamma'\in\Gamma'$ of $\gamma$,
then $\Mod_1(\Gamma)\le \Mod_1(\Gamma')$.
\end{lemma}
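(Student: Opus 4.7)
The plan is to show that every function admissible for $\Gamma'$ is automatically admissible for $\Gamma$, from which the claimed inequality on $1$-moduli follows by comparing infima.

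First I would fix a nonnegative Borel function $\rho$ on $X$ that is admissible for $\Gamma'$, i.e.\ $\int_{\gamma'}\rho\,ds \ge 1$ for every $\gamma'\in\Gamma'$. Given any $\gamma\in\Gamma$, the hypothesis yields a subcurve $\gamma'\in\Gamma'$ of $\gamma$. Since every curve is parametrized by arc-length, a subcurve is obtained by restricting the parametrization to a subinterval of $[0,\ell_{\gamma}]$, and therefore for the nonnegative integrand $\rho$ we have
\[
\int_{\gamma}\rho\,ds \;\ge\; \int_{\gamma'}\rho\,ds \;\ge\; 1.
\]
Hence $\rho$ is admissible for $\Gamma$.

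This shows that the class of admissible functions for $\Gamma$ contains the class of admissible functions for $\Gamma'$, so the infimum defining $\Mod_1(\Gamma)$ is taken over a superset of the one defining $\Mod_1(\Gamma')$, and therefore $\Mod_1(\Gamma)\le \Mod_1(\Gamma')$.

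There is essentially no obstacle; the only point requiring minimal care is the monotonicity of the line integral under restriction to a subcurve, which is immediate from the arc-length parametrization and the nonnegativity of $\rho$. No additional assumptions on $X$, on the measure, or on measurability of the curve families are needed for this comparison.
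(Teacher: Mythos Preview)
Your argument is correct and is exactly the standard proof: every $\rho$ admissible for $\Gamma'$ is admissible for $\Gamma$ by monotonicity of the arc-length integral under restriction to a subcurve, whence the infima compare as claimed. The paper does not give its own proof but cites \cite[Lemma 1.34]{BB}, where the same argument appears.
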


Recall the definition of the Dirichlet spaces $D^1(\cdot)$ from page
\pageref{definition of Dirichlet spaces}.

\begin{theorem}\label{thm:strict subset theorem}
Let $D, U\subset X$ with $U$ $1$-quasiopen
and let $E\subset X$ be $\mu$-measurable with
$\mathcal H(\partial^*E\cap U)<\infty$.
Suppose that also
\[
\capa_1(U\cap (I_E\cup\partial^*E)\setminus \fint D)=0.
\]
Let $\eps>0$.
Then there exists $\rho\in D_0^{1}(D,U)$ such that $\rho=1$
$1$-q.e. in $(I_E\cup \partial^*E)\cap U$,
$\Vert \rho-\ch_E\Vert_{L^1(U)}<\eps$, and
\[
\int_U g_{\rho}\,d\mu< C_d\mathcal H(\partial^*E\cap U)+\eps.
\]
\end{theorem}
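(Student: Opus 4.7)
The plan is to combine the Sobolev ``bridging'' construction from Proposition \ref{prop:Sobolev functions for finite Hausdorff measure 2} with the curve-geometric content of Proposition \ref{prop:ae curve goes through boundary quasiopen case}. The rough idea: build a Newton-Sobolev function $\eta$ that equals $1$ on (essentially) $\partial^{*}E\cap U$ with $\int g_\eta\,d\mu$ nearly $C_d\mathcal H(\partial^*E\cap U)$, then glue it with $\ch_{I_E}$ to obtain $\rho$. The hypothesis $\capa_1(U\cap(I_E\cup\partial^*E)\setminus\fint D)=0$ is used to ensure that the glued function automatically vanishes outside $D$.

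First I would work in the auxiliary set $\tilde U:=U\cap \fint D$, which is $1$-quasiopen as the intersection of two $1$-quasiopen sets (using Theorem \ref{thm:finely open is quasiopen and vice versa} to see that the $1$-finely open set $\fint D$ is $1$-quasiopen). By hypothesis and \eqref{eq:null sets of Hausdorff measure and capacity}, the set $(I_E\cup\partial^*E)\cap U\setminus\fint D$ is both capacity-zero and $\mathcal H$-null, so $F:=(\partial^*E\cap U)\cap\fint D$ satisfies $F\subset\tilde U$ and $\mathcal H(F)=\mathcal H(\partial^*E\cap U)<\infty$. Applying Proposition \ref{prop:Sobolev functions for finite Hausdorff measure 2} with a small parameter $\eps'>0$ yields $\eta\in N_0^{1,1}(\tilde U)$ with $0\le\eta\le 1$, $\eta=1$ on a $1$-quasiopen set containing $F$, $\int_X\eta\,d\mu<\eps'$, and $\int_X g_\eta\,d\mu<C_d\mathcal H(\partial^*E\cap U)+\eps'$.

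Next I would set $A:=(I_E\cup\partial^*E)\cap\fint D$ and define $\rho:=\max\{\eta,\ch_A\}$ on $U$. Since $A\subset\fint D\subset D$ and $\eta=0$ outside $\tilde U\supset U\cap\fint D$, the function $\rho$ vanishes on $U\setminus D$. On $A$ we have $\rho=1$, and since $(I_E\cup\partial^*E)\cap U$ differs from $A$ only by a capacity-zero set, $\rho=1$ holds $1$-q.e.\ on $(I_E\cup\partial^*E)\cap U$. Because capacity-zero sets are $\mu$-null (via \eqref{eq:null sets of Hausdorff measure and capacity}), while $\mu(I_E\triangle E)=0$ and $\mu(\partial^*E)=0$, we get $\ch_A=\ch_E$ $\mu$-a.e., hence $|\rho-\ch_E|\le\eta$ a.e.\ and $\Vert\rho-\ch_E\Vert_{L^1(U)}\le\int_X\eta\,d\mu<\eps'$.

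The main obstacle is verifying that $g_\eta$ is a $1$-weak upper gradient of $\rho$ in $U$; once this is established, choosing $\eps'=\eps$ finishes the proof. For $1$-a.e.\ curve $\gamma$ in $U$ with endpoints $x,y$, we may assume (throwing away zero-modulus families) that the upper-gradient inequality for $\eta$ holds on $\gamma$ and all its subcurves (via Lemma \ref{lem:subcurves}), that $x,y$ avoid the capacity-zero set $((I_E\cup\partial^*E)\cap U)\setminus A$, and that Proposition \ref{prop:ae curve goes through boundary quasiopen case} applies. A case analysis on where $x,y$ lie among $I_E\cup\partial^*E$ and $O_E$ then gives $|\rho(x)-\rho(y)|\le\int_\gamma g_\eta\,ds$: the trivial cases are when both lie in $A$ (both values equal $1$) or both lie in $O_E$ (both values equal those of $\eta$); the crucial case $x\in O_E$, $y\in(I_E\cup\partial^*E)\cap\fint D$ is handled by picking $t$ with $\gamma(t)\in\partial^*E$ from Proposition \ref{prop:ae curve goes through boundary quasiopen case}, noting $\gamma(t)\in F$ for $1$-a.e.\ such curve (so $\eta(\gamma(t))=1$), and applying the upper-gradient inequality for $\eta$ on the subcurve from $x$ to $\gamma(t)$. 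The delicate point throughout is orchestrating the several exceptional sets (the capacity-zero set from the hypothesis, the $\mathcal H$-null remainder of $\partial^*E\cap U\setminus F$, and the zero-modulus curve families) so that the case analysis applies to $1$-a.e.\ curve simultaneously.
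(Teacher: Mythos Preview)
Your proposal is correct and follows essentially the same route as the paper: both apply Proposition \ref{prop:Sobolev functions for finite Hausdorff measure 2} to obtain $\eta$ with $\eta=1$ on $\partial^*E\cap U\cap\fint D$, then define $\rho$ by gluing $\eta$ with an indicator of (a representative of) $I_E$, and verify that $g_\eta$ is a $1$-weak upper gradient of $\rho$ via a curve-by-curve analysis using Proposition \ref{prop:ae curve goes through boundary quasiopen case} and the capacity-zero hypothesis. The paper writes $\rho=\eta$ on $U\setminus(D\cap I_E)$ and $\rho=1$ on $U\cap D\cap I_E$, whereas you write $\rho=\max\{\eta,\ch_A\}$ with $A=(I_E\cup\partial^*E)\cap\fint D$; these differ only on a $\capa_1$-null set and the ensuing curve arguments are the same.
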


Note that the condition $\mathcal H(\partial^*E\cap U)<\infty$
is satisfied by any set of finite perimeter $E$, more precisely if
$P(E,\Om)<\infty$ for some open set $\Om\supset U$.
Note also that if $\ch_E\in L^1(U)$, then $\rho\in N^{1,1}_0(D,U)$.

\begin{proof}
The set $\fint D$ is $1$-quasiopen by Theorem
\ref{thm:finely open is quasiopen and vice versa}.
By Proposition \ref{prop:Sobolev functions for finite Hausdorff measure 2}
we find a function $\eta\in N_0^{1,1}(\fint D)\subset N_0^{1,1}(\fint D,U)$ such that
$\eta=1$ in $\partial^*E\cap \fint D\cap U$, $\Vert \eta\Vert_{L^1(X)}<\eps$,
and
\begin{equation}\label{eq:g eta estimate}
\int_X g_{\eta}\,d\mu< C_d\mathcal H(\partial^*E\cap \fint D\cap U)+\eps.
\end{equation}
Define
\[
\rho:=
\begin{cases}
\eta & \textrm{in }U\setminus (D\cap I_E),\\
1 & \textrm{in }U\cap D\cap I_E.
\end{cases}
\]
Since
$\capa_1(U\cap (I_E\cup\partial^*E)\setminus \fint D)=0$, we have
$\rho=1$
$1$-q.e. in $(I_E\cup \partial^*E)\cap U$, as desired.
Also,
\[
\Vert \rho-\ch_E\Vert_{L^1(U)}\le \Vert \eta\Vert_{L^1(U)}<\eps
\]
as desired.
Now we show that in the set $U$ we have $g_{\rho}\le g_{\eta}$,
where $g_{\rho}$ is the minimal $1$-weak upper gradient of $\rho$ in $U$.
Choose a curve $\gamma$ in $U$. If $\gamma$ lies entirely in $U\setminus (D\cap I_E)$,
then $\rho=\eta$ on this curve and so the pair $(\rho,g_{\eta})$
satisfies the upper gradient inequality
on $1$-a.e. such curve $\gamma$. If $\gamma$ lies entirely in $D\cap I_E$, then
$\rho=1$ on the curve and so again the upper gradient inequality is satisfied.

Assume then that $\gamma$ intersects both $D\cap I_E$ and $U\setminus (D\cap I_E)$;
by splitting $\gamma$ into two subcurves and reversing direction,
if necessary,
we can assume that
$\gamma(0)\in D\cap I_E$ and $\gamma(\ell_{\gamma})\in U\setminus (D\cap I_E)$.
Since we had $\capa_1(U\cap (I_E\cup \partial^*E)\setminus \fint D)=0$,
by \cite[Proposition 1.48]{BB} we know that $1$-a.e. curve avoids
$U\cap (I_E\cup \partial^*E)\setminus \fint D$.
Thus we can assume that $\gamma(\ell_{\gamma})\in U\setminus I_E$, and
then by Proposition \ref{prop:ae curve goes through boundary quasiopen case}
we can assume that there is $t\in (0,\ell_{\gamma}]$
such that $\gamma(t)\in\partial^*E\cap \fint D$; note that here we use also
Lemma \ref{lem:subcurves}.
We can also assume that the pair $(\eta,g_{\eta})$ satisfies
the upper gradient inequality on $\gamma$.
Then
\[
|\rho(0)-\rho(\ell_{\gamma})|=|1-\eta(\ell_{\gamma})|=|\eta(t)-\eta(\ell_{\gamma})|
\le\int_{\gamma}g_{\eta}\,ds.
\]
In total, we have established that $g_{\rho}\le g_{\eta}$ in $U$.
Thus by \eqref{eq:g eta estimate},
\[
\int_U g_{\rho}\,d\mu< C_d\mathcal H(\partial^*E\cap U)+\eps,
\]
as desired. Now $\rho\in D_0^1(D,U)$.
\end{proof}

\section{Applications in the study of capacities}\label{sec:applications}

In this section we apply the results of the previous section
to the study of variational
capacities. We begin with the proof of the first theorem
in the introduction.

\begin{proof}[Proof of Theorem \ref{thm:strict subset theorem intro}]
	To prove the ``only if'' direction,
	assume that $\rcapa_1(I_E,D)<\infty$.
	Thus there exists $u\in N_0^{1,1}(D)$ with $u=1$ in $I_E$.	
	By applying Proposition \ref{prop:one direction}
	with the choices $A=I_E$ and $U=X$, we obtain that
	$\capa_1(\overline{I_E}^1\setminus \fint D)=0$.
	
	To prove the ``if'' direction, let $\eps>0$. We note that
	$\mathcal H(\partial^*E)<\infty$ by \eqref{eq:consequence theta},
	and $\capa_1((I_E\cup\partial^*E)\setminus \fint D)=0$
	by \eqref{eq:meas th closure belongs to base}. Thus
	by Theorem \ref{thm:strict subset theorem}
	we find a function $\rho\in D_0^{1}(D)$ such that $\rho=1$
	$1$-q.e. in $I_E\cup \partial^*E$, $\Vert \rho-\ch_E\Vert_{L^1(X)}<\eps$, and
	\[
	\int_X g_{\rho}\,d\mu< C_d\mathcal H(\partial^*E)+\eps
	\le C_d\alpha^{-1}P(E,X)+\eps,
	\]
	using also \eqref{eq:consequence theta}.
	Since we assume $E$ to be bounded,
	$\rho\in L^1(X)$ and so in fact $\rho\in N_0^{1,1}(D)$.
	Hence
	\[
	\rcapa_1(I_E,D)\le \int_X g_{\rho}\,d\mu\le C_d\alpha^{-1}P(E,X)+\eps,
	\]
	so that letting $\eps\to 0$ we get the conclusion.
\end{proof}

Now we define the variational $1$-capacity in more general
(ambient) sets than the entire space $X$.

\begin{definition}\label{def:relative one capacity}
	Let $A\subset D$ and let $U\subset X$ be $\mu$-measurable. We define
	\[
	\rcapa_1(A,D,U):=\inf \int_U g_u\,d\mu,
	\]
	where the infimum is taken over functions $u\in N^{1,1}_0(D,U)$ such that
	$u=1$ in $A\cap U$,
	and $g_u$ is the minimal $1$-weak upper gradient of $u$ in $U$.
\end{definition}

Sometimes $(A,D^c,U)$ is called a condenser and
$\rcapa_{1}(A,D,U)$ is called the capacity of the condenser, see e.g.
\cite{HaSh}, as well as \cite{HK,KK,Maz} where $\rcapa_p$ for more general
$p\ge 1$ is considered.
Note that by \eqref{eq:quasieverywhere equivalence class}
we can equivalently require that $u=1$ $1$-q.e. in $A\cap U$.

Now we can show that the capacity of a condenser that consists of two sets of finite perimeter
is finite if and only if the sets do not ``touch'' each other.

\begin{theorem}\label{thm:condenser}
Let $\Om\subset X$ be open and bounded and let $E,F\subset \Om$ with $P(E,\Om)<\infty$,
$P(F,\Om)<\infty$, and $E\cap F=\emptyset$. Then
$\rcapa_1(I_E,I_F^c,\Om)<\infty$ if and only if
$\mathcal H(\partial^*E\cap \partial^*F\cap\Om)=0$.
Moreover, then $\rcapa_1(I_E,I_F^c,\Om)\le C\min\{P(E,\Om),P(F,\Om)\}$
for a constant $C=C(C_d,C_P,\lambda)$.
\end{theorem}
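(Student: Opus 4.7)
My plan is to handle the two implications separately and then obtain the $\min$ bound by a $u \mapsto 1-u$ symmetry. The ``only if'' direction should follow immediately from Proposition \ref{prop:one direction}: given $\rho \in N_0^{1,1}(I_F^c, \Om)$ with $\rho = 1$ on $I_E \cap \Om$, that proposition applied with $A = I_E$, $D = I_F^c$, $U = \Om$ (together with Lemma \ref{lem:fint of IE c}) yields $\capa_1(\overline{I_E}^1 \cap \overline{I_F}^1 \cap \Om) = 0$; since \eqref{eq:meas th closure belongs to base} places $\partial^*E \cap \partial^*F$ inside this intersection, \eqref{eq:null sets of Hausdorff measure and capacity} delivers $\mathcal H(\partial^*E \cap \partial^*F \cap \Om) = 0$.

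For the ``if'' direction, my plan is to apply Theorem \ref{thm:strict subset theorem} with $U = \Om$ and $D = I_F^c$. The hypothesis $\mathcal H(\partial^*E \cap \Om) < \infty$ is immediate from \eqref{eq:consequence theta}, so the main task is to check the capacity condition $\capa_1(\Om \cap (I_E \cup \partial^*E) \cap \overline{I_F}^1) = 0$ (using Lemma \ref{lem:fint of IE c} to rewrite $X \setminus \fint I_F^c$). First I would observe that $E \cap F = \emptyset$ forces $I_E \subset O_F$ (since $B(x,r) \cap F \subset B(x,r) \setminus E$) and by symmetry $I_F \subset O_E$, giving $(I_E \cup \partial^*E) \cap (I_F \cup \partial^*F) = \partial^*E \cap \partial^*F$. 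Combining this with the partition $X = I_F \cup \partial^*F \cup O_F$ and the inclusion $\partial^*F \subset \overline{I_F}^1$ from \eqref{eq:meas th closure belongs to base} reduces the verification to
\[
\capa_1(\partial^*E \cap \partial^*F \cap \Om) = 0 \quad\text{and}\quad \capa_1(O_F \cap \overline{I_F}^1 \cap \Om) = 0,
\]
the first of which is the standing assumption via \eqref{eq:null sets of Hausdorff measure and capacity}.

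The second vanishing will be the main obstacle. The strategy is to exploit the $1$-quasiopenness of $O_F \cap \Om$ given by Proposition \ref{prop:set of finite perimeter is quasiopen}: by Theorem \ref{thm:finely open is quasiopen and vice versa} I can write $O_F \cap \Om = V' \cup N'$ with $V'$ $1$-finely open and $\mathcal H(N') = 0$. At every $x \in V'$ the $1$-fine openness forces $X \setminus V'$ (which contains $I_F$) to be $1$-thin, and monotonicity of $\rcapa_1$ in the definition of $1$-thinness gives $V' \cap b_1 I_F = \emptyset$; together with the trivial $V' \cap I_F \subset O_F \cap I_F = \emptyset$ and the characterization \eqref{eq:characterization of fine closure}, this yields $V' \cap \overline{I_F}^1 = \emptyset$. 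Hence $O_F \cap \overline{I_F}^1 \cap \Om \subset N'$, and \eqref{eq:null sets of Hausdorff measure and capacity} closes this step.

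With the hypothesis of Theorem \ref{thm:strict subset theorem} verified, it produces $\rho \in N_0^{1,1}(I_F^c, \Om)$ (the $N^{1,1}$ upgrade comes from $\ch_E \in L^1(\Om)$, ensured by the boundedness of $\Om$) satisfying $\rho = 1$ $1$-q.e.\ on $I_E \cap \Om$ and
\[
\int_\Om g_\rho \, d\mu < C_d \mathcal H(\partial^*E \cap \Om) + \eps \le C_d \alpha^{-1} P(E,\Om) + \eps
\]
via \eqref{eq:consequence theta}; letting $\eps \to 0$ gives $\rcapa_1(I_E, I_F^c, \Om) \le C_d \alpha^{-1} P(E,\Om)$. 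To replace $P(E,\Om)$ by $\min\{P(E,\Om), P(F,\Om)\}$ I would invoke symmetry: since $\mu(\Om) < \infty$ we have $1 \in N^{1,1}(\Om)$, so any competitor $u \in N_0^{1,1}(I_E^c, \Om)$ with $u = 1$ on $I_F \cap \Om$ produces $1-u \in N_0^{1,1}(I_F^c, \Om)$ with $1-u = 1$ on $I_E \cap \Om$ and $g_{1-u} = g_u$ a.e., giving $\rcapa_1(I_E, I_F^c, \Om) = \rcapa_1(I_F, I_E^c, \Om)$; applying the previous estimate with $E$ and $F$ swapped finishes the proof.
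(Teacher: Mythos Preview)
Your proposal is correct and follows essentially the same route as the paper: Proposition~\ref{prop:one direction} for the ``only if'' direction, Theorem~\ref{thm:strict subset theorem} with $D=I_F^c$, $U=\Om$ for the ``if'' direction (after verifying the capacity hypothesis via the $1$-quasiopenness of $\Om\cap O_F$), and the $u\mapsto 1-u$ symmetry for the $\min$ bound. One minor remark: the identity $X\setminus\fint I_F^c=\overline{I_F}^1$ is just the topological duality between fine interior and fine closure, not Lemma~\ref{lem:fint of IE c}; the paper instead invokes that lemma to pass from $\fint I_F^c$ to $\fint O_F$ and then uses \eqref{eq:OF and fint OF} directly, which amounts to the same computation you carry out with the explicit decomposition $O_F\cap\Om=V'\cup N'$.
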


\begin{proof}
The set $\Om\cap O_F$ is $1$-quasiopen by
Proposition \ref{prop:set of finite perimeter is quasiopen},
and thus by Theorem \ref{thm:finely open is quasiopen and vice versa},
\begin{equation}\label{eq:OF and fint OF}
\mathcal H(\Om\cap O_F\setminus \fint O_F)=0.
\end{equation}
For sets $A_1,A_2\subset X$, we write $A_1\overset{\mathcal H-a.e.}{=}A_2$
if $\mathcal H(A_1\Delta A_2)=0$.
Now we get
\begin{align*}
\Om\cap (I_E\cup \partial^*E)\setminus \fint I_F^c
&=
\Om\cap (I_E\cup \partial^*E)\setminus \fint O_F\quad\textrm{by Lemma }\ref{lem:fint of IE c}\\
&\overset{\mathcal H-a.e.}{=} 
\Om\cap (I_E\cup \partial^*E)\setminus O_F\quad\textrm{by }\eqref{eq:OF and fint OF}\\
&=\Om\cap (I_E\cup \partial^*E)\cap (I_F\cup \partial^*F)\\
&=\Om\cap \partial^*E\cap \partial^*F\quad\textrm{since }E\cap F=\emptyset.
\end{align*}
If $\rcapa_1(I_E,I_F^c,\Om)<\infty$, then by Proposition
\ref{prop:one direction} and \eqref{eq:null sets of Hausdorff measure and capacity}
we know that
$\mathcal H(\Om\cap \overline{I_E}^1\setminus \fint I_F^c)=0$.
Then also $\mathcal H(\Om\cap (I_E\cup \partial^*E)\setminus \fint I_F^c)=0$ by \eqref{eq:meas th closure belongs to base}, and so
$\mathcal H(\Om\cap \partial^*E\cap \partial^*F)=0$.

Conversely, if $\mathcal H(\partial^*E\cap \partial^*F\cap\Om)=0$,
then $\mathcal H(\Om\cap (I_E\cup\partial^*E)\setminus \fint I_F^c)=0$
and so $\capa_1(\Om\cap (I_E\cup\partial^*E)\setminus \fint I_F^c)=0$
by \eqref{eq:null sets of Hausdorff measure and capacity}. Let $\eps>0$.
Since we also have $\mathcal H(\partial^*E\cap \Om)<\infty$ by
\eqref{eq:consequence theta}, 
we can apply Theorem \ref{thm:strict subset theorem} with the sets
$E$, $D=I^c_F$, and $U=\Om$, to find a function $\rho\in N_0^{1,1}(I^c_F,\Om)$
such that $\rho=1$ $1$-q.e. in $(I_E\cup \partial^*E)\cap \Om$
and
\[
\int_\Om g_{\rho}\,d\mu< C_d\mathcal H(\partial^*E\cap \Om)+\eps.
\]
Since $\eps>0$ was arbitrary, using also \eqref{eq:consequence theta} we now obtain
\[
\rcapa_{1}(I_E, I_F^c,\Om)\le C_d\mathcal H(\partial^*E\cap \Om)\le \alpha^{-1}C_d P(E,\Om).
\]
By the exactly analogous reasoning, we get
$\rcapa_{1}(I_F, I_E^c,\Om)\le \alpha^{-1}C_d P(F,\Om)$,
and since clearly $\rcapa_{1}(I_F, I_E^c,\Om)=\rcapa_{1}(I_E, I_F^c,\Om)$,
we get the conclusion
\[
\rcapa_1(I_E,I_F^c,\Om)\le C\min\{P(E,\Om),P(F,\Om)\}<\infty
\]
with $C=\alpha^{-1}C_d$.
\end{proof}

It is perhaps interesting that the quantity $\rcapa_1(I_E,I_F^c,\Om)$
can never take a large finite value; it is either at most of the order 
$\min\{P(E,\Om),P(F,\Om)\}$, or else it is infinite.
The analogous $p$-capacity for $p>1$ typically becomes arbitrarily large
as the sets $E$ and $F$ get closer to each other.

Now we define two different $\BV$-versions of the variational $1$-capacity.
Recall the definitions of the approximate limits $u^{\wedge}$ and $u^{\vee}$
from \eqref{eq:lower approximate limit} and \eqref{eq:upper approximate limit};
by Lebesgue's differentiation theorem, $u=u^{\wedge}=u^{\vee}$
a.e. for any locally integrable function $u$.
In the case $u=\ch_E$ with $E\subset X$, we have $x\in I_E$ if and only if $u^{\wedge}(x)=u^{\vee}(x)=1$, $x\in O_E$ if and only if $u^{\wedge}(x)=u^{\vee}(x)=0$, and $x\in \partial^*E$ if and only if $u^{\wedge}(x)=0$ and $u^{\vee}(x)=1$.

\begin{definition}\label{def:variational capacities}
Let $A\subset D$ and let $U\subset X$ be $\mu$-measurable.
We define the variational $\BV$-capacity by
	\[
	\rcapa_{\BV}(A,D,U):=\inf \Vert Du\Vert(U),
	\]
	where the infimum is taken over functions $u\in L^1(U)$ such that
	$u^{\wedge}=u^{\vee}= 0$ $\mathcal H$-a.e. in $U\setminus D$ and
	$u^{\wedge}\ge 1$ $\mathcal H$-a.e. in $A\cap U$.
	
	We define an alternative version of the variational $\BV$-capacity by
	\[
	\rcapa^{\vee}_{\BV}(A,D,U):=\inf \Vert Du\Vert(U),
	\]
	where the infimum is taken over functions $u\in L^1(U)$ such that
	$u^{\wedge}=u^{\vee}= 0$ $\mathcal H$-a.e. in $U\setminus D$ and
	$u^{\vee}\ge 1$ $\mathcal H$-a.e. in $A\cap U$.
	If $U=X$, we omit it from the notation.
\end{definition}

By truncation we see that it is enough to consider test functions $0\le u\le 1$.
Note that the condition $\Vert Du\Vert(U)<\infty$ implicitly means
that $\Vert Du\Vert(\Om)<\infty$ for some open
$\Om\supset U$.
It is obvious that always $\rcapa^{\vee}_{\BV}(A,D,U)\le \rcapa_{\BV}(A,D,U)$,
and in \cite[Eq. (4.2)]{L-Appr} it was noted that also
\begin{equation}\label{eq:BV and one capacity comparison}
\rcapa_{\BV}(A,D,U)\le \rcapa_{1}(A,D,U)
\end{equation}
whenever $U$ is open.

In \cite{CDLP} it was shown, with rather different methods compared to ours
and with slightly different definitions,
that in Euclidean spaces one has
$\rcapa^{\vee}_{\BV}(A,\R^n)=\rcapa_{\BV}(A,\R^n)$ for every $A\subset \R^n$.
A definition similar to $\rcapa^{\vee}_{\BV}(A,D)$
was also studied (in metric spaces) in \cite{HaSh},
in the case where $A$ is a compact subset of an open set $D$;
it follows from \cite[Theorem 4.5, Theorem 4.6]{HaSh} that
\begin{equation}\label{eq:Ha Sh capacity result}
\rcapa_{\BV}(A,D)\le C\rcapa^{\vee}_{\BV}(A,D).
\end{equation}
The constant $C\ge 1$ is indeed necessary in the metric space setting,
see \cite[Example 4.4]{HaSh}.
It is of interest to study capacities for more general sets;
in \cite{L-Appr} it was shown that
\[
\rcapa_{\BV}(A,D)=\rcapa_1(A,D)
\]
when $A\subset D$ and both $D$ and $X\setminus A$
are $1$-quasiopen, and this was used to prove an approximation result for
$\BV$ functions.
Now we wish to complement these results with the following theorem on the capacity
$\rcapa^{\vee}_{\BV}$; note that $A$ is now a completely general set and so the theorem
greatly strengthens \eqref{eq:Ha Sh capacity result}.


\begin{theorem}
Let $A\subset V$ and $U\subset X$ such that $V$ and $U$ are $1$-quasiopen.
Then
\[
\rcapa_{1}(A,V,U)\le C\rcapa^{\vee}_{\BV}(A,V,U)
\]
for a constant $C=C(C_d,C_P,\lambda)\ge 1$.
In particular, if $U$ is open,
\[
\rcapa_{\BV}(A,V,U)\le C\rcapa^{\vee}_{\BV}(A,V,U).
\]
\end{theorem}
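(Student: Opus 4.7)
The plan is to take an arbitrary competitor $u$ for $\rcapa^{\vee}_{\BV}(A,V,U)$, pass via the coarea formula to a super-level set $E_{t_0}:=\{u>t_0\}$ with controlled perimeter, and then invoke Theorem \ref{thm:strict subset theorem} to build a Newton--Sobolev competitor for $\rcapa_1(A,V,U)$ out of $E_{t_0}$. By truncation I may assume $0\le u\le 1$; by the convention stated after Definition \ref{def:variational capacities}, $u$ is defined on some open $\Omega\supset U$ with $\Vert Du\Vert(\Omega)<\infty$. Setting $M:=\Vert Du\Vert(U)$ and applying coarea to the $1$-quasiopen set $U$ (valid by \cite[Proposition 3.8]{L-LSC}) gives $M=\int_0^1 P(E_t,U)\,dt$. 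Since $u\in L^1(U)$, I can then select $t_0\in (0,1)$ with $P(E_{t_0},U)\le M$, $\mathcal H(\partial^*E_{t_0}\cap U)<\infty$ (via \eqref{eq:consequence theta}), and $\mu(E_{t_0}\cap U)<\infty$.

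The key verification is the capacity hypothesis of Theorem \ref{thm:strict subset theorem},
\[
\capa_1\bigl(U\cap (I_{E_{t_0}}\cup\partial^*E_{t_0})\setminus \fint V\bigr)=0.
\]
Directly from \eqref{eq:upper approximate limit} one checks the two-sided comparison
\[
\{u^{\vee}>t_0\}\subset I_{E_{t_0}}\cup\partial^*E_{t_0}\subset \{u^{\vee}\ge t_0\}.
\]
Since $V$ is $1$-quasiopen, Theorem \ref{thm:finely open is quasiopen and vice versa} lets me write $V$ as the union of a $1$-finely open set and an $\mathcal H$-null set, which gives $\mathcal H(V\setminus\fint V)=0$; together with the hypothesis $u^{\vee}=0$ $\mathcal H$-a.e.\ in $U\setminus V$ this yields $u^{\vee}=0$ $\mathcal H$-a.e.\ in $U\setminus\fint V$. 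The right containment above combined with $t_0>0$ then shows that $(I_{E_{t_0}}\cup\partial^*E_{t_0})\cap U\setminus \fint V$ is $\mathcal H$-null, and \eqref{eq:null sets of Hausdorff measure and capacity} converts this to a $\capa_1$-null set.

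Theorem \ref{thm:strict subset theorem} now produces, for any $\eps>0$, a function $\rho\in D_0^{1}(V,U)$ with $\rho=1$ $1$-q.e.\ in $(I_{E_{t_0}}\cup\partial^*E_{t_0})\cap U$ and $\int_U g_\rho\,d\mu<C_d\mathcal H(\partial^*E_{t_0}\cap U)+\eps\le C_d\alpha^{-1}M+\eps$. The remark following that theorem upgrades $\rho$ to $N_0^{1,1}(V,U)$ because $\ch_{E_{t_0}}\in L^1(U)$. Using the left containment, the hypothesis $u^{\vee}\ge 1>t_0$ $\mathcal H$-a.e.\ in $A\cap U$, and \eqref{eq:null sets of Hausdorff measure and capacity} we obtain $\rho=1$ $1$-q.e.\ in $A\cap U$, so $\rho$ is admissible for $\rcapa_1(A,V,U)$. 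Letting $\eps\to 0$ and then taking the infimum over $u$ yields the first inequality with $C=C_d\alpha^{-1}$, and the ``in particular'' assertion follows at once from \eqref{eq:BV and one capacity comparison}. The only real obstacle is the capacity verification in the second paragraph: it is precisely where the hypothesis that $V$ be $1$-quasiopen (rather than merely measurable) is genuinely used, since it is what lets us replace $V$ by $\fint V$ up to an $\mathcal H$-null set.
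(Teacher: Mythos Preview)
Your proof is correct and follows essentially the same route as the paper's: pick a near-optimal competitor $u$, use the coarea formula to pass to a super-level set, verify the hypothesis of Theorem~\ref{thm:strict subset theorem} via the containments $\{u^{\vee}>t_0\}\subset I_{E_{t_0}}\cup\partial^*E_{t_0}\subset\{u^{\vee}\ge t_0\}$ together with the quasiopenness of $V$, and then apply that theorem. The only cosmetic difference is that the paper applies the coarea formula on an auxiliary open set $U_e\supset U$ with $\Vert Du\Vert(U_e)<\rcapa^{\vee}_{\BV}(A,V,U)+\eps$, whereas you apply it directly on the $1$-quasiopen set $U$ via \cite[Proposition 3.8]{L-LSC}; both are valid and lead to the same conclusion.
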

\begin{proof}
We can assume that $\rcapa^{\vee}_{\BV}(A,V,U)<\infty$. Let $\eps>0$.
Pick an open set $U_e\supset U$ (we can assume that $\mu(U_e\setminus U)<\infty$) and a function $u\in L^1(U_e)$
such that $0\le u\le 1$ in $U_e$,
 $u^{\wedge}=u^{\vee}= 0$ $\mathcal H$-a.e. in $U\setminus V$,
	$u^{\vee}= 1$ $\mathcal H$-a.e. in $A\cap U$, and
	\[
	\Vert Du\Vert(U_e)<\rcapa^{\vee}_{\BV}(A,V,U)+\eps.
	\]
	By the coarea formula \eqref{eq:coarea}, we find
	$t\in (0,1)$ such that $E:=\{u>t\}$ satisfies
	$P(E,U_e)\le\Vert Du\Vert(U_e)$.
	Then
	\begin{equation}\label{eq:A IE partial E}
	\mathcal H(A\cap U\setminus (I_E\cup\partial^*E))
	\le  \mathcal H(A\cap U\setminus \{u^{\vee}=1\})=0
	\end{equation}
	and similarly
	\[
	\mathcal H(U\cap (I_E\cup \partial^*E)\setminus  V)
	\le \mathcal H(U\cap \{u^{\vee}>0\}\setminus  V)=0.
	\]
	Now by Theorem \ref{thm:finely open is quasiopen and vice versa} and \eqref{eq:null sets of Hausdorff measure and capacity},
	\[
	\mathcal H(U\cap (I_E\cup \partial^*E)\setminus  \fint V)
	=\mathcal H(U\cap (I_E\cup \partial^*E)\setminus  V)=0.
	\]
	Moreover, by \eqref{eq:consequence theta},
	$\mathcal H(\partial^*E\cap U_e)<\infty$.
	Now by Theorem \ref{thm:strict subset theorem}
	we find a function $\rho\in D_0^{1}(V,U)$ such that
	$\rho=1$ $1$-q.e. in $(I_E\cup\partial^*E)\cap U$,
	$\Vert \rho-\ch_E\Vert_{L^1(U)}<\eps$ (and then in fact $\rho\in N_0^{1,1}(V,U)$),
	and $\int_U g_{\rho}\,d\mu<C_d\mathcal H(\partial^*E\cap U)+\eps$.
	Then by \eqref{eq:consequence theta},
	\[
	\int_U g_{\rho}\,d\mu<C_d\mathcal H(\partial^*E\cap U)+\eps
	\le C_d\mathcal H(\partial^*E\cap U_e)+\eps
	\le C_d\alpha^{-1} P(E,U_e)+\eps.
	\]
	Since $\rho=1$ $1$-q.e. in $(I_E\cup\partial^*E)\cap U$,
	also $\rho=1$ $1$-q.e. in $A\cap U$ by \eqref{eq:A IE partial E}
	and \eqref{eq:null sets of Hausdorff measure and capacity}.
Thus
	\begin{align*}
	\rcapa_1(A,V,U)\le \int_U g_{\rho}\,d\mu
	&< C_d\alpha^{-1} P(E,U_e)+\eps\\
	&< C_d\alpha^{-1} (\rcapa_{\BV}^{\vee}(A,V,U)+\eps)+\eps.
	\end{align*}
	Letting $\eps\to 0$, we get the first claim.
	The second claim then follows from \eqref{eq:BV and one capacity comparison}.
\end{proof}

Even though $A$ is allowed to be an arbitrary set,
the assumption that $V$ is $1$-quasiopen cannot be removed,
as demonstrated by the following example.

\begin{example}
Let $X=\R$ (unweighted) and let $A=V=[0,1]$ and $U=\R$. Then
\[
\rcapa^{\vee}_{\BV}(A,V,U)\le \Vert D\ch_{[0,1]}\Vert(\R)=2,
\]
but $\rcapa_{\BV}(A,V,U)=\infty$ since there are no admissible functions.
\end{example}

\section{An approximation result for BV functions}\label{sec:approximation}

In this section we apply our theory of $1$-strict subsets to prove
a pointwise approximation result for $\BV$ functions, given in Theorem
\ref{thm:approximation from above intro} in the introduction.

\begin{lemma}\label{lem:disjoint quasiopen sets}
Let $S_1,\ldots ,S_n\subset X$ be pairwise disjoint Borel sets that are of finite
$\mathcal H$-measure. Then there exist pairwise disjoint $1$-quasiopen sets $U_j\supset S_j$,
$j=1,\ldots,n$.
\end{lemma}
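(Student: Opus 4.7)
The plan is to define, for each $j$, a Newton--Sobolev bump function $\eta_j \in N^{1,1}(X)$ that equals $1$ pointwise on $S_j$ and $0$ pointwise on $\bigcup_{i \neq j} S_i$, and then set $U_j := \bigcap_{i \neq j} \{\eta_j > \eta_i\}$, a Voronoi-type selector.

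First I would construct the $\eta_j$. The set $\bigcup_{i \neq j} S_i$ is a Borel set of finite $\mathcal{H}$-measure, so by Lemma \ref{lem:set of finite Hausdorff measure quasiclosed} the set $U^{(j)} := X \setminus \bigcup_{i \neq j} S_i$ is $1$-quasiopen, and it contains $S_j$. Applying Proposition \ref{prop:Sobolev functions for finite Hausdorff measure 2} with this $U^{(j)}$ and $F = S_j$ (noting $\mathcal H(F)<\infty$) yields $\eta_j \in N_0^{1,1}(U^{(j)}) \subset N^{1,1}(X)$ with $0 \le \eta_j \le 1$, with $\eta_j = 1$ on a $1$-quasiopen set containing $S_j$ (so pointwise $\eta_j(x) = 1$ for every $x \in S_j$) and $\eta_j = 0$ on a $1$-quasiopen set containing $X \setminus U^{(j)} = \bigcup_{i \neq j} S_i$ (so pointwise $\eta_j(x) = 0$ for every $x \in \bigcup_{i \neq j} S_i$).

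With the $U_j$ defined as above, pairwise disjointness is immediate from the strict inequalities: a point $x \in U_j \cap U_{j'}$ with $j \neq j'$ would force both $\eta_j(x) > \eta_{j'}(x)$ and $\eta_{j'}(x) > \eta_j(x)$, a contradiction. The inclusion $S_j \subset U_j$ is also immediate: for $x \in S_j$ we have $\eta_j(x) = 1$ while $\eta_i(x) = 0$ for each $i \neq j$, since $S_j \subset \bigcup_{k \neq i} S_k$.

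The remaining step is to verify that each $U_j$ is $1$-quasiopen, for which I rely on two general facts. First, the strict superlevel set of a $1$-quasicontinuous function is $1$-quasiopen: if $f|_{X \setminus G}$ is continuous for some open $G$ with $\capa_1(G) < \eps$, then $\{f > c\} \cap (X \setminus G)$ is relatively open in $X \setminus G$, so it extends to an open $V \subset X$ with $\{f > c\} \cup G = V \cup G$ open. Second, a finite intersection of $1$-quasiopen sets is $1$-quasiopen: if each $V_i \cup G_i$ is open with $\capa_1(G_i) < \eps/n$, then the identity $\bigcap_i (V_i \cup G_i) \cup \bigcup_i G_i = (\bigcap_i V_i) \cup \bigcup_i G_i$ exhibits the intersection plus the small-capacity set $\bigcup_i G_i$ as open. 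Each $\eta_j \in N^{1,1}(X)$ is $1$-quasicontinuous on $X$ by Theorem \ref{thm:quasicontinuity on quasiopen sets} applied with $U = X$, hence so is each difference $\eta_j - \eta_i$, whence each $\{\eta_j > \eta_i\}$ is $1$-quasiopen, and $U_j$ is $1$-quasiopen as a finite intersection. The only nontrivial ingredient is Proposition \ref{prop:Sobolev functions for finite Hausdorff measure 2}, which is precisely designed to furnish the required bumps vanishing on the competing $S_i$'s; once those are in hand, the selection via strict superlevel sets yields the disjoint $1$-quasiopen neighborhoods in one line.
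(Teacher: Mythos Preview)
Your proof is correct and follows essentially the same approach as the paper: both construct the bump functions $\eta_j\in N_0^{1,1}(X\setminus\bigcup_{k\neq j}S_k)$ via Lemma~\ref{lem:set of finite Hausdorff measure quasiclosed} and Proposition~\ref{prop:Sobolev functions for finite Hausdorff measure 2}, then carve out the $U_j$ as finite intersections of $1$-quasiopen superlevel sets of quasicontinuous functions. The only difference is cosmetic: the paper takes $U_j=\{\eta_j>1/2\}\setminus\bigcup_{k\neq j}\{\eta_k\ge 1/2\}$, whereas you use the Voronoi-type selector $U_j=\bigcap_{i\neq j}\{\eta_j>\eta_i\}$; both yield the desired disjointness and inclusion for the same reasons.
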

\begin{proof}
By Lemma \ref{lem:set of finite Hausdorff measure quasiclosed} the set
$X\setminus \bigcup_{k=2}^n S_k$ is $1$-quasiopen, and contains $S_1$.
Thus by Proposition \ref{prop:Sobolev functions for finite Hausdorff measure 2}
we find a function $\eta_1\in N^{1,1}_0(X\setminus \bigcup_{k=2}^n S_k)$
with $\eta_1=1$ in $S_1$.
By the quasicontinuity of Newton-Sobolev functions, it is straightforward to check that
$\{\eta_1>1/2\}$ and $X\setminus \{\eta_1\ge 1/2\}$
are $1$-quasiopen sets (see e.g. \cite[Proposition 3.4]{BBM}).
We can do the same for each set $S_1,\ldots, S_n$.
Then define for each $j=1,\ldots,n$
\[
U_j:=\{\eta_j>1/2\}\setminus \bigcup_{\substack{k=1 \\ k\neq j}}^n\{\eta_k\ge 1/2\}.
\]
Now each set $U_j$ contains $S_j$ and is a $1$-quasiopen set by
the fact that every finite intersection of $1$-quasiopen sets is
$1$-quasiopen (see e.g. \cite[Lemma 2.3]{Fug}).
\end{proof}

Next we prove the following Leibniz rule.

\begin{lemma}\label{lem:Leibniz}
Let $\Om\subset X$ be open and let $U_1,\ldots, U_n\subset \Om$ be pairwise disjoint
$1$-quasiopen sets.
For each $j=1,\ldots,n$ let $\eta_j\in N^{1,1}_0(U_j)$
with $0\le \eta_j\le 1$ on $X$,
$\eta_j=0$ in a $1$-quasiopen set containing $X\setminus U_j$,
and such that there is a
$1$-quasiopen set $V_j\subset \{\eta_j=1\}$, $j=1,\ldots,n$.
Let $U_0$ be another $1$-quasiopen set with
$U_0\cup V_1\cup \ldots \cup V_n=\Om$ and let
$\eta:=\sum_{j=1}^n\eta_j$, and finally suppose that
$v\in N^{1,1}(U_0)$ and $\rho_j\in N^{1,1}(U_j)$ for each
$j=1,\ldots,n$.
Then
\[
w:=\sum_{j=1}^n \eta_j \rho_j +(1-\eta)v
\]
has a $1$-weak upper gradient (in $\Om$)
\[
g:= \sum_{j=1}^n \eta_j g_{\rho_j} +(1-\eta)g_v+\sum_{j=1}^n g_{\eta_j}|\rho_j-v|,
\]
where each $g_{\rho_j}$ is the minimal $1$-weak upper gradient of
$\rho_j$ in $U_j$, and $g_v$ is the minimal $1$-weak upper gradient of
$v$ in $U_0$.
\end{lemma}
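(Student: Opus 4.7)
The plan is to verify the $1$-weak upper gradient inequality $|w(\gamma(0)) - w(\gamma(\ell_\gamma))| \le \int_\gamma g\,ds$ directly for $1$-almost every curve $\gamma$ in $\Omega$. The key algebraic tool is the Leibniz-type identity
\[
Fa + (1-F)b - F'a' - (1-F')b' = F(a-a') + (1-F)(b-b') + (F-F')(a'-b'),
\]
which, applied at the endpoints of small subintervals with $F = \eta_j$, $a = \rho_j$, $b = v$, produces precisely the three terms in the stated formula for $g$ once each factor is bounded by its upper gradient. The crucial point is that $0 \le \eta_j \le 1$ yields the correct coefficients $\eta_j$ and $1-\eta_j$ in front of $g_{\rho_j}$ and $g_v$, rather than the cruder bound $g_{\rho_j} + g_v + (|\rho_j| + |v|) g_{\eta_j}$ that a naive product-rule argument applied termwise would give.

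First I would restrict to the family of curves $\gamma$ in $\Omega$ for which, for each $j$: (a) $(\eta_j, g_{\eta_j})$ satisfies the upper gradient inequality on every subcurve; (b) $(\rho_j, g_{\rho_j})$ does so on every subcurve in $U_j$; (c) $(v, g_v)$ does so on every subcurve in $U_0$; (d) the $1$-quasiopen sets $U_0, U_1, \ldots, U_n, V_1, \ldots, V_n$ are $1$-path-open along $\gamma$ (using \cite[Remark 3.5]{S2}); and (e) each of $\eta_j, \rho_j, v$ is continuous along $\gamma$ on the relevant preimage, which follows from quasicontinuity on quasiopen sets (Theorem \ref{thm:quasicontinuity on quasiopen sets}). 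The complement of this family has $1$-modulus zero. Because $\Omega = U_0 \cup V_1 \cup \ldots \cup V_n$ with $V_j \subset U_j$ and the $U_k$ pairwise disjoint, the pullback is a relatively open cover of $[0, \ell_\gamma]$; by compactness I would then choose a partition $0 = s_0 < s_1 < \ldots < s_N = \ell_\gamma$ such that each segment $\gamma([s_{i-1}, s_i])$ lies entirely in one of (A) some $V_j$, (B) some $U_0 \cap U_j$, or (C) $U_0 \setminus \bigcup_k U_k$, and with each $\gamma(s_i)$ in $U_0$ (so that $v \circ \gamma$ is defined at every partition point, allowing the one-sided limits of $w$ to agree there).

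On segments of type (A), pairwise disjointness of the $U_k$ forces $\eta_k = 0$ for $k \neq j$, so $w = \rho_j$ and the inequality follows from $g \ge g_{\rho_j}$ on $V_j$. On type (C), all $\eta_j$ vanish so $w = v$ and $g \ge g_v$. Type (B) is the main case: applying the identity above to a refining partition of $[s_{i-1}, s_i]$, bounding each factor using (a)–(c), and passing to the limit using the continuity from (e) together with $0 \le \eta_j \le 1$ yields
\[
|w(\gamma(s_i)) - w(\gamma(s_{i-1}))| \le \int_{s_{i-1}}^{s_i}\bigl[\eta_j g_{\rho_j} + (1-\eta_j)g_v + g_{\eta_j}|\rho_j - v|\bigr]\circ\gamma\,d\tau.
\]
Telescoping across $i$ completes the proof. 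The main obstacle is precisely this Riemann-sum limit passage in case (B), which demands honest continuity of $\eta_j \circ \gamma$, $\rho_j \circ \gamma$, and $v \circ \gamma$ along the curve. On the \emph{quasiopen} (rather than open) domains $U_0$ and $U_j$, this upgrade from quasicontinuity to pointwise continuity along $1$-a.e. curve — ultimately tied to $1$-path-openness of quasiopen sets — is the technical heart of the argument, and also the step that determines the exceptional curve family thrown away at the outset.
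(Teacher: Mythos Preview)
Your partition step has a genuine gap. You claim that by compactness one can partition $[0,\ell_\gamma]$ into subintervals each mapping into (A) some $V_j$, (B) some $U_0\cap U_j$, or (C) $U_0\setminus\bigcup_k U_k$. For this you need the $\gamma$-preimages of all three types to be relatively open in $[0,\ell_\gamma]$. Types (A) and (B) are fine, since $V_j$ and $U_0\cap U_j$ are $1$-quasiopen and hence $1$-path open. But type (C) is not: $U_0\setminus\bigcup_k U_k$ is a $1$-quasiopen set minus a $1$-quasiopen set, which need not be $1$-quasiopen, so $\gamma^{-1}(U_0\setminus\bigcup_k U_k)$ can be a Cantor-like closed set. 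Then no finite partition into subintervals of types (A), (B), (C) exists, and the telescoping argument collapses. Notice that you never use the hypothesis ``$\eta_j=0$ in a $1$-quasiopen set containing $X\setminus U_j$''; this is precisely what is needed to repair the argument.

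The paper's proof fixes this by replacing your set (C) with a genuinely $1$-quasiopen set $V$: taking finite intersections of the $1$-quasiopen sets on which the $\eta_j$ vanish, one obtains a $1$-quasiopen $V\subset\Om$ that contains $\Om\setminus\bigcup_j U_j$ and is disjoint from every $\{\eta_j>0\}$. Then $\Om=V\cup\bigcup_j[(U_0\cap U_j)\cup V_j]$ is a cover by $1$-quasiopen (hence $1$-path open) sets, and the compactness argument goes through. On segments in $V$ one has $w=v$, on segments in $V_j$ one has $w=\rho_j$, and on segments in $U_0\cap U_j$ the paper simply invokes the standard Leibniz rule \cite[Lemma~2.18]{BB} to conclude that $\eta_j g_{\rho_j}+(1-\eta_j)g_v+g_{\eta_j}|\rho_j-v|$ is a $1$-weak upper gradient of $w=\eta_j\rho_j+(1-\eta_j)v$ there; your Riemann-sum limit in case (B) is unnecessary once this lemma is available.
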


Note that $g_{\eta_j}=0$ outside $U_j\cap U_0$ by \eqref{eq:upper gradient in coincidence set}, and so the function $g$ is well defined in the whole of $\Om$.

\begin{proof}
	Using the fact that $\eta_j=0$ in a $1$-quasiopen set containing $X\setminus U_j$, and
	the fact that finite intersections of $1$-quasiopen
	sets are $1$-quasiopen, we find a $1$-quasiopen set
	$V\subset \Om$ containing
	$\Om\setminus \bigcup_{j=1}^n U_j$ but not intersecting any of the sets
	$\{\eta_j>0\}$.
By \cite[Remark 3.5]{S2} we know that $V$ is $1$-path open, meaning that for
$1$-a.e. curve $\gamma$, the set $\gamma^{-1}(V)$ is a relatively open
subset of $[0,\ell_{\gamma}]$.
The same holds for each of the sets $U_0\cap U_j$ and $V_j$.
Let $\gamma$ be a curve such that this property for preimages
holds for all subcurves of $\gamma$.
In the set $V$ we know that $g_v= g$ is $1$-weak upper gradient of $v=w$.
In each $V_j$, the function $g_{\rho_j}= g$ is a $1$-weak upper
gradient of $\rho_j=w$.
Finally, by the Leibniz rule given in \cite[Lemma 2.18]{BB}, in each set $U_j\cap U_0$
the function 
\[
\eta_j g_{\rho_j} +(1-\eta_j)g_v+ g_{\eta_j}|\rho_j-v|
=\eta_j g_{\rho_j} +(1-\eta)g_v+ g_{\eta_j}|\rho_j-v|= g
\]
is a $1$-weak upper gradient of $\eta_j u+(1-\eta_j)v=w$.
Assume further that the pair $(w,g)$ satisfies the upper gradient
inequality on each subcurve of $\gamma$ lying either in $V$,
in one of the sets $U_j\cap U_0$, or in one of the sets
$V_j$. By Lemma \ref{lem:subcurves}
these properties are satisfied by $1$-a.e. curve $\gamma$.

Note that we can write the entire $\Om$ as the union of $1$-quasiopen sets
\[
\Om=V\cup \bigcup_{j=1}^n U_j
=V\cup \bigcup_{j=1}^n\left[(U_0\cap U_j)\cup V_j\right].
\]
Since $[0,\ell_{\gamma}]$ is a compact set,
the curve $\gamma$ can be broken into a finite number
of subcurves each of which lies either in $V$,
or in one of the sets $U_0\cap U_j$, or in $V_j$.
Summing up over the subcurves, we find that the pair $(w,g)$ satisfies the upper gradient
inequality on $\gamma$, and thus $g$ is a $1$-weak upper gradient of $w$ in $\Om$.
\end{proof}

\begin{proposition}[{\cite[Proposition 3.6]{L-DC}}]\label{prop:approximation with L infinity bound}
	Let $U\subset \Om\subset X$ be such that $U$ is $1$-quasiopen and
	$\Om$ is open, and let $u\in\BV(\Om)$ and $\beta>0$ such that
	$u^{\vee}-u^{\wedge}<\beta$ in $U$.
	Then there exists a sequence
	$(u_i)\subset N^{1,1}(U)$ such that
	$u_i\to u$ in $L^1(U)$, $\sup_{U}| u_i-u^{\vee}|\le 9\beta$
	for all $i\in\N$, and
	\[
	\lim_{i\to\infty}\int_{U} g_{u_i}\,d\mu=\Vert Du\Vert(U),
	\]
	where each $g_{u_i}$
	is the minimal $1$-weak upper gradient of $u_i$ in $U$.
\end{proposition}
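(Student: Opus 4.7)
My approach would be a Whitney-type partition-of-unity construction adapted to the $1$-quasiopen set $U$. The central heuristic is that the hypothesis $u^\vee-u^\wedge<\beta$ on $U$, combined with the quasi-semicontinuity of $u^\wedge$ and $u^\vee$ (Proposition~\ref{prop:quasisemicontinuity}), forces $u^\vee$ to oscillate by at most a bounded multiple of $\beta$ on sufficiently small neighborhoods of $1$-quasi every point of $U$. The small-jump condition is precisely what prevents the natural discrete convolution from overshooting $u^\vee$ by more than $O(\beta)$ pointwise.

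First, for each $i\in\N$ I would construct a countable cover $\{B_j^{(i)}=B(x_j^{(i)},r_j^{(i)})\}$ of $1$-quasi all of $U$, with radii $r_j^{(i)}\to 0$ uniformly as $i\to\infty$, bounded overlap of the $5\lambda$-dilates, and $\lambda B_j^{(i)}\Subset\Om$. Because $U$ is only $1$-quasiopen, this cover must be built by exhausting $U$ from inside by closed subsets via Lemma~\ref{lem:using quasi Lindelof} applied with $F:=X\setminus U$; the uncovered portion of $U$ has arbitrarily small $1$-capacity and is absorbed in the error estimates. For each ball I then choose a constant $c_j^{(i)}\in\R$, say a Lebesgue-point value of $u$ inside $B_j^{(i)}$, satisfying $|c_j^{(i)}-u^\vee(x)|\le 4\beta$ for every $x\in 2B_j^{(i)}\cap U$; this is the heart of the argument and is achievable thanks to the small-jump hypothesis, the quasi-semicontinuity, and the Poincar\'e inequality. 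With a Lipschitz partition of unity $\{\psi_j^{(i)}\}$ subordinate to $\{2B_j^{(i)}\}$ satisfying $\Lip\psi_j^{(i)}\le C/r_j^{(i)}$, I set
\[
u_i:=\sum_j c_j^{(i)}\,\psi_j^{(i)}.
\]

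From here the three conclusions follow from reasonably standard estimates. The pointwise bound
\[
|u_i(x)-u^\vee(x)|\le\sum_j \psi_j^{(i)}(x)\,|c_j^{(i)}-u^\vee(x)|
\]
yields $\sup_U|u_i-u^\vee|\le 9\beta$, the constant $9$ absorbing the ball dilation and partition overlap. The $L^1$-convergence $u_i\to u$ follows from this pointwise bound together with the integrability of $u$ on $U$. Finally, the Leibniz rule (Lemma~\ref{lem:Leibniz}) gives the upper gradient estimate
\[
g_{u_i}\le C\sum_j \frac{|c_j^{(i)}-c_{k(j)}^{(i)}|}{r_j^{(i)}}\,\ch_{2B_j^{(i)}}
\]
for any adjacent ball index $k(j)$, and applying the $(1,1)$-Poincar\'e inequality on $\lambda B_j^{(i)}$ together with bounded overlap yields $\limsup_i\int_U g_{u_i}\,d\mu\le\|Du\|(U)$. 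Lower semicontinuity of the $\BV$-energy under $L^1$-convergence upgrades this to the equality in the statement.

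The main obstacle I anticipate is the step of producing the constants $c_j^{(i)}$ with the sharp bound $|c_j^{(i)}-u^\vee(x)|\le 4\beta$ on the dilated balls $2B_j^{(i)}\cap U$. The assumption $u^\vee-u^\wedge<\beta$ only gives a pointwise gap between the two semicontinuous envelopes of $u$; translating this into genuine oscillation control of $u^\vee$ on small balls requires a delicate combination of the Poincar\'e inequality, quasi-semicontinuity, and a careful selection of the cover that keeps the exceptional set (where $u^\vee$ oscillates by more than $\beta$ across a ball) small in $1$-capacity and disjoint from the balls actually carrying the approximation. The machinery of $1$-strict subsets developed in Section~\ref{sec:strict subsets} is what makes it feasible to treat this exceptional set.
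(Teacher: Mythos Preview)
The paper does not prove this proposition; it is quoted verbatim from \cite[Proposition~3.6]{L-DC}, whose title (``Discrete convolutions of $\BV$ functions in quasiopen sets'') confirms that your partition-of-unity/discrete-convolution scheme is indeed the intended mechanism. So at the level of overall strategy you are on the right track.

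There is, however, a genuine gap in your energy estimate. The standard discrete-convolution bound coming from the Poincar\'e inequality and bounded overlap gives
\[
\int_U g_{u_i}\,d\mu\le C(C_d,C_P,\lambda)\,\Vert Du\Vert(U'),
\]
with a structural constant $C>1$ and with $U'$ a slight enlargement of $U$ (because of the $\lambda$-dilation in \eqref{eq:poincare inequality}). Neither defect goes away by letting the radii shrink. The proposition, by contrast, asserts the \emph{sharp} limit $\int_U g_{u_i}\,d\mu\to\Vert Du\Vert(U)$, and this is used in an essential way in the proof of Theorem~\ref{thm:approximation from above intro} (see the estimate involving \eqref{eq:gvi in Om setminus S1 Sn}). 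To obtain the constant~$1$ one cannot simply average $u$ over balls; one has to begin from a near-optimal sequence $(w_k)\subset N^{1,1}_{\loc}$ coming from the very definition of $\Vert Du\Vert$ on small pieces of $U$, and then glue these local approximants with a partition of unity, so that the Leibniz error terms (not the main terms) are what the Poincar\'e inequality controls. Your outline conflates these two roles.

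A second issue is the one you flag yourself: producing constants $c_j^{(i)}$ with $|c_j^{(i)}-u^{\vee}(x)|\le 4\beta$ on $2B_j^{(i)}\cap U$. The hypothesis $u^{\vee}-u^{\wedge}<\beta$ in $U$ says nothing about the oscillation of $u^{\vee}$ between \emph{different} points of a ball; a smooth $u$ with $u^{\vee}=u^{\wedge}$ everywhere can still oscillate arbitrarily on any fixed ball. What the small-jump hypothesis actually buys is that, once you have a good local $N^{1,1}$ approximant $w$ of $u$ on a piece of $U$, truncating $w$ into an interval of length $O(\beta)$ around the local values of $u^{\vee}$ costs little energy and forces the $L^{\infty}$ bound. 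This truncation step, combined with the quasi-semicontinuity of $u^{\wedge},u^{\vee}$ from Proposition~\ref{prop:quasisemicontinuity}, is what your sketch is missing; the $1$-strict-subset machinery of Section~\ref{sec:strict subsets} is not what resolves it.
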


\begin{proof}[Proof of Theorem \ref{thm:approximation from above intro}]
We begin by decomposing the jump set $S_u=\{u^{\wedge}<u^{\vee}\}$
into the pairwise disjoint sets
$S_1:=\{u^{\vee}-u^{\wedge}\ge 1\}$ and
$S_j:=\{1/j\le u^{\vee}-u^{\wedge}< 1/(j-1)\}$
for $j=2,3,\ldots$ (all understood to be subsets of $\Om$).
By the decomposition \eqref{eq:variation measure decomposition}, we have
$\mathcal H(S_j)<\infty$ for every $j\in\N$.
Applying Lemma \ref{lem:using quasi Lindelof}, for each $j\in\N$ we find
open sets $W_{j1}\supset W_{j2}\supset \ldots\supset S_j$ such that
\begin{equation}\label{eq:Wji and Sj}
\capa_1\left(\bigcap_{i=1}^{\infty}W_{ji}\setminus S_j\right)=0.
\end{equation}
We can also assume that these are subsets of $\Om$ and that
$\Vert Du\Vert(W_{ji})<\Vert Du\Vert(S_j)+1/i^2$.
By Lemma \ref{lem:disjoint quasiopen sets}, for each $i\in\N$
and $j\le i$
we find $1$-quasiopen sets $V_{ji}\supset S_j$ such that
$V_{1i},\ldots V_{ii}$ are pairwise disjoint.
Moreover, note that the sets
$\{u^{\vee}-u^{\wedge}<1/(j-1)\}$
are $1$-quasiopen
by Proposition \ref{prop:quasisemicontinuity}.
Thus the sets $U_{ji}:=W_{ji}\cap V_{ji}\cap \{u^{\vee}-u^{\wedge}<1/(j-1)\}\supset S_j$ are
$1$-quasiopen.
Now for each $i\in\N$, $U_{1i},\ldots,U_{ii}$ are
pairwise disjoint sets with
\begin{equation}\label{eq:Uji and Sj}
\Vert Du\Vert(U_{ji})<\Vert Du\Vert(S_j)+\frac{1}{i^2}
\end{equation}
for all $j=1,\ldots,i$.

Using Proposition \ref{prop:Sobolev functions for finite Hausdorff measure 2},
take functions $\eta_{ji}\in N_0^{1,1}(U_{ji})$ such that $0\le \eta_{ji}\le 1$ on $X$,
$\eta_{ji}=1$ in a $1$-quasiopen set containing $S_j$,
and $\eta_{ji}=0$ in a $1$-quasiopen set containing $X\setminus U_{ji}$.

Let $u_i:=\max\{-i,u\}$ for each $i\in\N$.
By Lemma \ref{lem:set of finite Hausdorff measure quasiclosed},
each $\Om\setminus (S_1\cup \ldots \cup S_i)$ is a $1$-quasiopen set
in which $u_i^{\vee}-u_i^{\wedge}\le u^{\vee}-u^{\wedge}<1/i$.
Thus by Proposition \ref{prop:approximation with L infinity bound}
we find a function $v_{i}\in N^{1,1}(\Om\setminus (S_1\cup \ldots \cup S_i))$
such that
\begin{equation}\label{eq:vi minus u L1}
|v_i- u_i^{\vee}|\le 9/i\ \ \textrm{in }\Om\setminus (S_1\cup \ldots \cup S_i)\quad\textrm{and}\quad
\Vert v_i-u_i\Vert_{L^1(\Om\setminus (S_1\cup \ldots \cup S_i))}<1/i
\end{equation}
and
\begin{equation}\label{eq:gvi in Om setminus S1 Sn}
\int_{\Om\setminus (S_1\cup \ldots \cup S_i)}g_{v_{i}}\,d\mu
<\Vert Du\Vert(\Om\setminus (S_1\cup \ldots \cup S_i))+\frac{1}{i},
\end{equation}
where $g_{v_i}$ is the minimal $1$-weak upper gradient
of $v_i$ in $\Om\setminus (S_1\cup \ldots \cup S_i)$.
It is easy to show that $\mu(S_1\cup \ldots \cup S_n)=0$
since the $S_j$'s are sets of finite $\mathcal H$-measure
(see e.g. \cite[Lemma 6.1]{KKST}).
Since $L^1$-convergence implies pointwise convergence a.e. for
a subsequence,
by Lebesgue's dominated convergence theorem
(using \eqref{eq:vi minus u L1}) 
we can also assume that
\begin{equation}\label{eq:u minus vi condition}
\int_{\Om}g_{\eta_{ji}}|v_i-u_i|\,d\mu<\frac{1}{i^2}
\end{equation}
for each $j=1,\ldots,i$.
Also, for each $i\in\N$ let
\[
\alpha_i:=\frac{1}{i^2}\min \left\{1, \Big(\int_{X}g_{\eta_{1i}}\,d\mu\Big) ^{-1},\ldots,
\Big(\int_{X}g_{\eta_{ii}}\,d\mu\Big) ^{-1}\right\}.
\]
Now fix $i\in\N$ and $j\in\{1,\ldots,i\}$.
For all $k\in\N$ pick
\[
\beta_{jik}\in ((k-1) \alpha_i,k\alpha_i)
\]
such that (in what follows, we work with the function $u_i+i$ since it is nonnegative)
\[
P(\{u_i+i>\beta_{jik}\},\Om)<\infty
\]
and
\begin{equation}\label{eq:choice of beta k}
\alpha_i P(\{u_i+i>\beta_{jik}\},U_{ji})\le 
\int_{(k-1)\alpha_i}^{k\alpha_i}P(\{u_i+i>t\},U_{ji})\,dt;
\end{equation}
note that this choice is possible since $P(\{u_i+i>t\},\Om)<\infty$
for a.e. $t\in\R$ by the coarea formula \eqref{eq:coarea}.
Now we will apply Theorem \ref{thm:strict subset theorem}
with the choices
\[
E=\{u_i+i>\beta_{jik}\},\quad D=X\setminus (S_j\cap \{(u_i+i)^{\vee}<\beta_{jik}\}),
\quad\textrm{and}\quad U= U_{ji}.
\]
Note
that if $x\in I_E\cup \partial^*E$, then $(u_i+i)^{\vee}(x)\ge \beta_{jik}$.
Thus $I_E\cup \partial^*E\subset D$.
Also note that $D$ is $1$-quasiopen
by Lemma \ref{lem:set of finite Hausdorff measure quasiclosed}, and so $\mathcal H(D\setminus \fint D)=0$ by
Theorem \ref{thm:finely open is quasiopen and vice versa}.
Thus
\[
\mathcal H((I_E\cup \partial^*E)\setminus \fint D)=0
\]
as required in Theorem \ref{thm:strict subset theorem}.
Clearly also
\begin{equation}\label{eq:u vee and I}
\{(u_i+i)^{\vee}> \beta_{jik}\}\subset I_{\{u_i+i>\beta_{jik}\}}\cup \partial^*\{u_i+i>\beta_{jik}\}.
\end{equation}
Now Theorem \ref{thm:strict subset theorem} gives functions
\[
\rho_{jik}\in N_0^{1,1}(X\setminus (S_j\cap \{(u_i+i)^{\vee}< \beta_{jik}\}),U_{ji})
\]
with $0\le \rho_{jik}\le 1$ in $U_{ji}$, $\rho_{jik}=1$ $1$-q.e.
in $(I_{\{u_i+i>\beta_{jik}\}}\cup \partial^*\{u_i+i>\beta_{jik}\})\cap U_{ji}$
(and by redefining, we can leave out the ``$1$-q.e.'')
and thus in $\{(u_i+i)^{\vee}>\beta_{jik}\}\cap U_{ji}$ by \eqref{eq:u vee and I},
\begin{equation}\label{eq:rho jik L1 estimate}
\Vert\rho_{jik}-\ch_{\{u_i+i>\beta_{jik}\}}\Vert_{L^1(U_{ji})}
<\frac{2^{-k}}{i^2},
\end{equation}
and
\begin{equation}\label{eq:estimate of g rho jik}
\int_{U_{ji}} g_{\rho_{jik}}\,d\mu
< C_d\mathcal H(\partial^*\{u_i+i>\beta_{jik}\}\cap U_{ji})+\frac{2^{-k}}{i^2}.
\end{equation}
Since we can choose the norm
$\Vert \rho_{ijk}-\ch_{\{u_i+i>\beta_{jik}\}}\Vert_{L^1(U_{ji})}$ to be as small as we
like and since $L^1$-convergence implies pointwise convergence for a subsequence,
by Lebesgue's dominated convergence theorem we can also assume that
\begin{equation}\label{eq:g eta ji estimate}
\int_{U_{ji}}g_{\eta_{ji}}|\rho_{jik}-\ch_{\{u_i+i>\beta_{jik}\}}|\,d\mu
<\frac{2^{-k}}{i^2}.
\end{equation}
Then define two functions in the set $U_{ji}$
(both understood to be pointwise defined)
\[
\widetilde{\rho}_{ji}:=\alpha_i\sum_{k=1}^{\infty}\rho_{jik}-i
\quad\textrm{and also}\quad
\widehat{u}_{ji}:=\alpha_i\sum_{k=1}^{\infty}\ch_{\{(u_i+i)^{\vee}>\beta_{jik}\}}-i.
\]
Note that $\widetilde{\rho}_{ji}\ge \widehat{u}_{ji}$  and
$\sup_{U_{ji}}|\widehat{u}_{ji}-u_i^{\vee}|\le \alpha_i$,
and so
\begin{equation}\label{eq:rho ji tilde first estimate}
\widetilde{\rho}_{ji}\ge u_i^{\vee}-\alpha_i\quad\textrm{in }U_{ji}.
\end{equation}
Moreover, since $\rho_{ijk}=0$
in $S_j\cap \{(u_i+i)^{\vee}<\beta_{jik}\}$, it follows that
\begin{equation}\label{eq:rho ji widetilde estimate in Sj}
\widetilde{\rho}_{ji}\le (u_i+i)^{\vee}+\alpha_i-i
=\max\{u^{\vee},-i\}+\alpha_i\quad \textrm{in }S_j.
\end{equation}
Additionally, by \eqref{eq:rho jik L1 estimate} and the fact that $\alpha_i\le 1$,
\[
\Vert \widetilde{\rho}_{ji}-\widehat{u}_{ji}\Vert_{L^1(U_{ji})}\le \sum_{k=1}^{\infty}
\int_{U_{ji}}\alpha_i|\rho_{jik}-\ch_{\{u_i+i>\beta_{jik}\}}|\,d\mu<\frac{1}{i^2}
\]
and so
\begin{equation}\label{eq:rho ji tilde minus ui}
\begin{split}
\Vert \widetilde{\rho}_{ji}-u_i\Vert_{L^1(U_{ji})}
&\le \Vert \widetilde{\rho}_{ji}-\widehat{u}_{ji}\Vert_{L^1(U_{ji})}
+\Vert \widehat{u}_{ji}-u_i\Vert_{L^1(U_{ji})}\\
&<\frac{1}{i^2}+\alpha_i\mu(\Om)
\le \frac{1}{i^2}(1+\mu(\Om))
\end{split}
\end{equation}
(recall that we assume $\mu(\Om)<\infty$).

Using Lemma \ref{lem:sup is upper gradient} we get
(note that $\alpha$ and $\alpha_i\le 1$ denote different quantities)
\begin{align*}
\int_{U_{ji}}g_{\widetilde{\rho}_{ji}}\,d\mu
&\le \alpha_i\sum_{k=1}^{\infty}\int_{U_{ji}}g_{\rho_{jik}}\,d\mu\\
&< \alpha_i\sum_{k=1}^{\infty}\left(C_d\mathcal H(\partial^*\{u_i+i>\beta_{jik}\}\cap U_{ji})
+\frac{2^{-k}}{i^2}\right)\quad\textrm{by }\eqref{eq:estimate of g rho jik}\\
&\le \alpha_i\sum_{k=1}^{\infty}\left(C_d\alpha^{-1}P(\{u_i+i>\beta_{jik}\},
U_{ji})+\frac{2^{-k}}{i^2}\right)\quad\textrm{by }\eqref{eq:consequence theta}\\
&\le \sum_{k=1}^{\infty}\left(C_d\alpha^{-1}\int_{(k-1)\alpha_i}^{k\alpha_i}P(\{u_i+i>t\},U_{ji})\,dt+\frac{2^{-k}}{i^2}\right)\quad\textrm{by }\eqref{eq:choice of beta k}\\
&= C_d\alpha^{-1}\int_{-i}^{\infty}P(\{u_i>t\},U_{ji})\,dt+\frac{1}{i^2}\\
&= C_d \alpha^{-1} \Vert Du_i\Vert(U_{ji})+\frac{1}{i^2}
\end{align*}
by the coarea formula \eqref{eq:coarea}.
Also, by the fact that
$\Vert \widehat{u}_{ji}-u_i\Vert_{L^{\infty}(U_{ji})}\le \alpha_i$
and the choice of $\alpha_i$, we have
\begin{equation}\label{eq:rho ji ui estimate with derivative}
\begin{split}
\int_{U_{ji}}g_{\eta_{ji}}|\widetilde{\rho}_{ji}-u_i|\,d\mu
&\le \int_{U_{ji}}g_{\eta_{ji}}|\widetilde{\rho}_{ji}-\widehat{u}_{ji}|\,d\mu+ 
\int_{U_{ji}}g_{\eta_{ji}}|\widehat{u}_{ji}-u_i|\,d\mu\\
&\le \sum_{k=1}^{\infty}\int_{U_{ji}}g_{\eta_{ji}}|\rho_{jik}
-\ch_{\{u_i+i>\beta_{jik}\}}|\,d\mu+\frac{1}{i^2}\\
&<\frac{2}{i^2}\quad\textrm{by }\eqref{eq:g eta ji estimate}.
\end{split}
\end{equation}
Since $u_i^{\vee}-u_i^{\wedge}\le u^{\vee}-u^{\wedge}<1/(j-1)$ in $U_{ji}$,
by Proposition \ref{prop:approximation with L infinity bound}
we also find a function $v_{ji}\in N^{1,1}(U_{ji})$ such that
\[
u_i^{\vee}\le v_{ji}\le u_i^{\vee}+18/(j-1)\ \textrm{ in } U_{ji}\quad\textrm{and}\quad
\int_{U_{ji}} g_{v_{ji}}\,d\mu<\Vert Du_i\Vert(U_{ji})+\frac{1}{i^2}.
\]
Let $\rho_{ji}:=\min\{\widetilde{\rho}_{ji},v_{ji}\}$.
Then
\begin{equation}\label{eq:estimate of g rho ji}
\int_{U_{ji}}g_{\rho_{ji}}\,d\mu
\le \int_{U_{ji}}g_{\widetilde{\rho}_{ji}}\,d\mu+\int_{U_{ji}}g_{v_{ji}}\,d\mu
< (C_d \alpha^{-1}+1) \Vert Du_i\Vert(U_{ji})+\frac{2}{i^2}.
\end{equation}
Thus
$\rho_{ji}\in N^{1,1}(U_{ji})$.
Also,
\begin{equation}\label{eq:rho ji pointwise bounds}
\rho_{ji}\le u_i^{\vee}+18/(j-1)\quad\textrm{and}\quad\rho_{ji}\ge u_i^{\vee}-\alpha_i
\quad\textrm{in }U_{ji}\quad\textrm{by }\eqref{eq:rho ji tilde first estimate},
\end{equation}
and by \eqref{eq:rho ji tilde minus ui},
\begin{equation}\label{eq:rho ji minus u L1}
\Vert \rho_{ji}-u_i\Vert_{L^1(U_{ji})}\le \Vert \widetilde{\rho}_{ji}-u_i\Vert_{L^1(U_{ji})}
\le \frac{1}{i^2}(1+\mu(\Om)).
\end{equation}
Moreover, by \eqref{eq:rho ji ui estimate with derivative} we have
\[
\int_{U_{ji}}g_{\eta_{ji}}|\rho_{ji}-u_i|\,d\mu
\le \int_{U_{ji}}g_{\eta_{ji}}|\widetilde{\rho}_{ji}-u_i|\,d\mu<\frac{2}{i^2}.
\]
Then using also \eqref{eq:u minus vi condition}, we get
\begin{equation}\label{eq:rho ji minus v i condition}
\int_{U_{ji}}g_{\eta_{ji}}|\rho_{ji}-v_i|\,d\mu<\frac{3}{i^2}.
\end{equation}
Recall that so far we have kept $i\in\N$ and $j\in\{1,\ldots,i\}$ fixed.
Now for each $i\in\N$,
let $\eta_i:=\max_{j\in\{1,\ldots,i\}}\eta_{ji}$.
Define the functions
\[
w_{i}:=\sum_{j=1}^i\eta_{ji}\rho_{ji}+(1-\eta_{i})v_{i}+\frac{9}{i},\quad i\in\N.
\]
Then by \eqref{eq:rho ji minus u L1} and \eqref{eq:vi minus u L1},
\begin{align*}
\Vert w_i-u_i\Vert_{L^1(\Om)}
&\le \sum_{j=1}^{i}\Vert \rho_{ji}-u_i\Vert_{L^1(U_{ji})}
+\Vert v_i-u_i\Vert_{L^1(\Om\setminus (S_1\cup \ldots \cup S_i))}+\frac{9}{i}\mu(\Om)\\
&\le \frac{1}{i}(1+\mu(\Om))+\frac{1}{i}+\frac{9}{i}\mu(\Om)=\frac{2}{i}+\frac{10}{i}\mu(\Om).
\end{align*}
Clearly $u_i\to u$ in $L^1(\Om)$ as $i\to\infty$, and so $w_i\to u$ in $L^1(\Om)$, as desired.
We have by the Leibniz rule of Lemma \ref{lem:Leibniz},
using also the fact that $\Vert Du_i\Vert\le \Vert Du\Vert$ as measures for each $i\in\N$,
\begin{align*}
&\int_{\Om}g_{w_{i}}\,d\mu\\
&  \le \sum_{j=1}^i\int_{\Om}\eta_{ji} g_{\rho_{ji}}\,d\mu
+\int_{\Om}(1-\eta_{i})g_{v_{i}}\,d\mu
+\sum_{j=1}^i\int_{\Om}g_{\eta_{ji}}|\rho_{ji}-v_{i}|\,d\mu\\
&  \le \sum_{j=1}^i\int_{U_{ji}}g_{\rho_{ji}}\,d\mu
+\int_{\Om\setminus (S_1\cup \ldots \cup S_i)}g_{v_{i}}\,d\mu
+\sum_{j=1}^i\int_{U_{ji}}g_{\eta_{ji}}|\rho_{ji}-v_{i}|\,d\mu\\
&  \le \sum_{j=1}^i\left((C_d\alpha^{-1}+1)\Vert Du\Vert(U_{ji})
+\frac{2}{i^2}\right)\\
&\qquad\qquad+\Vert Du\Vert(\Om\setminus (S_1\cup \ldots \cup S_i))+\frac{1}{i}+\frac{3}{i}
\quad\textrm{by }\eqref{eq:estimate of g rho ji},
\eqref{eq:gvi in Om setminus S1 Sn},\eqref{eq:rho ji minus v i condition}\\
&  \le (C_d\alpha^{-1}+1)\sum_{j=1}^i\left(\Vert Du\Vert(S_j)
+\frac{3}{i^2}\right)
+\Vert Du\Vert(\Om\setminus (S_1\cup \ldots \cup S_i))
+\frac{4}{i}\quad\textrm{by }\eqref{eq:Uji and Sj}\\
&  \le \Vert Du\Vert(\Om)+C_d\alpha^{-1}\Vert Du\Vert(S_u)
+\frac{7(C_d\alpha^{-1}+1)}{i}.
\end{align*}
Thus recalling the decomposition of the variation measure
\eqref{eq:variation measure decomposition},
\[
\limsup_{i\to\infty}\int_{\Om}g_{w_{i}}\,d\mu\le \Vert Du\Vert(\Om)
+C_d\alpha^{-1}\Vert Du\Vert(S_u)
=\Vert Du\Vert(\Om)+C_d\alpha^{-1}\Vert Du\Vert^j(\Om),
\]
as desired.

Since we had $|v_{i}-u_i^{\vee}|\le 9/i$ in $\Om\setminus (S_1\cup \ldots \cup S_i)$
(recall \eqref{eq:vi minus u L1})
and $\rho_{ji}\ge u_i^{\vee}-\alpha_i\ge u_i^{\vee}-1/i$ in $U_{ji}$
by \eqref{eq:rho ji pointwise bounds}, it follows that $w_{i}\ge u_i^{\vee}\ge u^{\vee}$
in $\Om$, as desired.
Moreover, since by \eqref{eq:rho ji widetilde estimate in Sj} we have
\[
\rho_{ji}\le\widetilde{\rho}_{ji}\le \max\{u^{\vee},-i\}+\alpha_i\quad\textrm{in }S_j,
\]
then also
\[
w_{i}=\rho_{ji}+\frac{9}{i}\le \max\{u^{\vee},-i\}+\alpha_i+\frac{9}{i}
\le \max\{u^{\vee},-i\}+\frac{10}{i}
\]
in $S_j$, for $j=1,\ldots,i$.
Since $S_u=\bigcup_{j=1}^{\infty}S_j$,
we get $w_i\to u^{\vee}$ in $S_u$.

Finally consider $x\in \Om\setminus S_u$. Fix $\eps>0$.
Recall from \eqref{eq:Wji and Sj} that we had
$U_{ji}\subset W_{ji}$ with
$W_{j1}\supset W_{j2}\supset \ldots\supset S_j$ such that
\[
\capa_1\left(\bigcap_{i=1}^{\infty}W_{ji}\setminus S_j\right)=0.
\]
Denote these $\capa_1$-negligible sets by $H_j$, and $H:=\bigcup_{j=1}^{\infty}H_j$.
Then assume that $x\in \Om\setminus (S_u\cup H)$.

Take $M\in\N$ such that $18/M<\eps$.
Since $x\notin H$, for some $N\in\N$
we have $x\notin U_{1i}\cup \ldots \cup U_{Mi}$ for all 
$i\ge N$.
Now if for a given $i\ge N$ we have
$x\in \Om\setminus \bigcup_{j=1}^{i}U_{ji}$,
then $w_i(x)=v_i(x)+9/i\le u_i^{\vee}(x)+18/i$.
If $x\in \bigcup_{j=1}^{i}U_{ji}$, then $x\in U_{ji}$ for some
$j> M$ and so by \eqref{eq:rho ji pointwise bounds}, $\rho_{ji}(x)\le u_i^{\vee}(x)+18/(j-1)<u_i^{\vee}(x)+\eps$.
Hence using \eqref{eq:vi minus u L1} once more,
\[
w_i(x)=\eta_{ji}(x)\rho_{ji}(x)+(1-\eta_{ji}(x))v_{i}(x)+9/i
\le u_i^{\vee}(x)+\eps+18/i
\]
for all $i\ge N$. Since $\eps>0$ was arbitrary,
we get $w_i(x)\to u^{\vee}(x)$. Thus we have the desired pointwise convergence
$1$-q.e.,
and then in fact we obtain it at every point by redefining the functions $w_i$; recall
\eqref{eq:quasieverywhere equivalence class}.
\end{proof}

In the next example we show that the term $C_a\Vert Du\Vert^j(\Om)$
in Theorem \ref{thm:approximation from above intro} is necessary.

\begin{example}\label{ex:constant necessary}
Let $X=\R$ equipped with the Euclidean metric and the weighted Lebesgue measure
$d\mu:=w\,d\mathcal L^1$, with $w=1$ in $[-1,1]$ and $w=2$ in $\R\setminus [-1,1]$.
Clearly this measure is doubling and the space supports a $(1,1)$-Poincar\'e inequality.
Let $u:=\ch_{[-1,1]}\in\BV(X)$. Let $w\in N^{1,1}(X)$ with $w\ge u^{\vee}=\ch_{[-1,1]}$
everywhere, and let $g$ be an upper gradient of $w$. Let $\eps>0$. For some $x<-1$ we have
\[
1-\eps<|w(x)-w(-1)|\le \int_{x}^{-1} g\,ds.
\]
Similarly for some $y>1$ we have
\[
1-\eps<|w(1)-w(y)|\le \int_{1}^y g\,ds.
\]
Thus
\[
\int_X g\,d\mu\ge \int_{x}^{-1} g\,d\mu+\int_{1}^y g\,d\mu
=2\int_{x}^{-1} g\,ds+2\int_{1}^y g\,ds> 4-4\eps.
\]
Since $\eps>0$ was arbitrary, we get $\Vert g\Vert_{L^1(X)}\ge 4$.
Then the minimal $1$-weak upper gradient of $u$ also satisfies
$\Vert g_u\Vert_{L^1(X)}\ge 4$ (see e.g. \cite[Lemma 1.46]{BB}).
However, defining the Lipschitz functions $u_i(x):=\min\{1,\max\{0,i-i|x|\}\}$, we have 
$u_i\to u$ in $L^1(X)$ and then
\[
\Vert Du\Vert(X)\le \liminf_{i\to\infty}\int_X g_{u_i}\,d\mu
=\liminf_{i\to\infty}\int_{\R} i\ch_{[-1,1]\setminus [-1+1/i,1-1/i]}\,d\mathcal L^1
=2.
\]
This shows that the term $C_a\Vert Du\Vert^j(\Om)$
in Theorem \ref{thm:approximation from above intro} is necessary.
Letting $E:=[-1,1]$, this reasoning also shows that the constant
$C_a$ in Theorem \ref{thm:strict subset theorem intro} is necessary.
\end{example}

\begin{remark}\label{rmk:approximation thm}
	Recall that in the Euclidean setting, the term
	$C_a\Vert Du\Vert^j(\Om)$ is not needed (see
	\cite[Proposition 7.3]{CCM}). Having $\lim_{i\to\infty}\int_{\Om}g_{w_i}\,d\mu= \Vert Du\Vert(\Om)$
	is in fact used in \cite{CCM} to prove the pointwise convergence,
	whereas in our setting it seems necessary to construct
	the approximations ``by hand'', which makes the proof of
	Theorem \ref{thm:approximation from above intro}
	rather technically involved.
	
The assumption $\mu(\Om)<\infty$ in Theorem \ref{thm:approximation from above intro}
is not necessary; it could be removed by using cutoff functions in a very similar way to
\cite[Lemma 3.2]{L-Appr}. We refrain from repeating this rather technical argument here.
\end{remark}

\begin{lemma}\label{lem:norm to uniform convergence}
Let $\Om\subset X$ be open and let $u_i\to u$ in $N^{1,1}(\Om)$.
Then for every $\eps>0$ and every open set $\Om'\Subset\Om$
there exists an open set $G\subset \Om$
such that $\capa_1(G)<\eps$ and $u_i\to u$ uniformly in $\Om'\setminus G$.
\end{lemma}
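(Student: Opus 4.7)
The plan is to pass to a subsequence and apply a standard capacity-threshold argument built from a Lipschitz cutoff and Lemma~\ref{lem:sup is upper gradient}. Since $\Om'\Subset\Om$ and $X$ is proper, I would choose a Lipschitz cutoff $\phi\colon X\to[0,1]$ with $\phi\equiv 1$ on $\Om'$ and $\supp\phi$ a compact subset of $\Om$, with some Lipschitz constant $L$. Setting $f_i:=\phi(u_i-u)$ and extending by zero to $X$, the Leibniz rule gives $f_i\in N^{1,1}(X)$ with
\[
\|f_i\|_{N^{1,1}(X)}\le (1+L)\,\|u_i-u\|_{N^{1,1}(\Om)}\longrightarrow 0.
\]

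After passing to a subsequence, which I relabel as $(u_i)$, I may assume $\|f_i\|_{N^{1,1}(X)}<4^{-i}\eps/4$, and I define
\[
v:=\sup_{i\in\N}2^i|f_i|.
\]
Since $\sum_i 2^i\|f_i\|_{L^1(X)}<\eps/4$, the function $v$ is finite $\mu$-a.e.\ and satisfies $\|v\|_{L^1(X)}<\eps/4$. By Lemma~\ref{lem:sup is upper gradient} applied to the functions $2^i|f_i|$ with $1$-weak upper gradients $2^ig_{f_i}$, the function $\sup_i 2^ig_{f_i}\le \sum_i 2^ig_{f_i}$ is a $1$-weak upper gradient of $v$; hence $\|v\|_{N^{1,1}(X)}<\eps/2$, and consequently $\capa_1(\{v\ge 1\})<\eps/2$ since $v\ge 1$ on this set. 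By outer regularity of $\capa_1$ I can then choose an open set $G\supset\{v\ge 1\}$ with $\capa_1(G)<\eps$; noting that $\{v>0\}\subset\supp\phi\subset\Om$, we may arrange $G\subset\Om$ by intersecting with $\Om$ if necessary.

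For $x\in\Om'\setminus G$ the inequality $v(x)<1$ forces $2^i|f_i(x)|<1$ for every $i$, and since $\phi\equiv 1$ on $\Om'$ this reads $|u_i(x)-u(x)|<2^{-i}$, delivering uniform convergence on $\Om'\setminus G$. The only real obstacle is that the bare assumption $u_i\to u$ in $N^{1,1}(\Om)$ supplies no rate of convergence, so the summability required to bound the capacity of the supremum set $\{v\ge 1\}$ through Lemma~\ref{lem:sup is upper gradient} forces the passage to a subsequence at the start (as is standard in quasi-uniform convergence arguments in this setting); once that subsequence is fixed, the rest of the argument is essentially formal.
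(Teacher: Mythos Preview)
Your argument mirrors the paper's: multiply by a compactly supported Lipschitz cutoff to pass to $N^{1,1}(X)$-convergence, and then invoke a quasi-uniform convergence result outside a set of small capacity---the paper simply quotes this step from \cite[Corollary~1.72]{BB}, whereas you reprove it by hand via the supremum construction and Lemma~\ref{lem:sup is upper gradient}. Your observation that one must pass to a subsequence is apt and matches what that corollary actually delivers; for the single application of the lemma in the paper a subsequence suffices.
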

\begin{proof}
Let $\eps>0$ and let $\Om'\Subset\Om$ be open.
Take $\eta\in \Lip_c(\Om)$ such that $0\le \eta\le 1$ and $\eta=1$ in $\Om'$.
It is easy to check that $\eta u_i\to \eta u$ in $N^{1,1}(X)$, and then
according to \cite[Corollary 1.72]{BB}, there exists an open set $G\subset X$
such that $\capa_1(G)<\eps$ and $\eta u_i\to \eta u$ uniformly in $X\setminus G$.
Since $\eta u_i=u_i$ and $\eta u=u$ in $\Om'$, we have the result.
\end{proof}

In \cite[Proposition 4.1]{KKST3} it is shown that for any $u\in\BV(X)$, there exists a sequence
$(v_i)\subset \liploc(X)$ such that $v_i\to u$ in $L^1(X)$,
\[
\limsup_{i\to\infty}\int_{\Om}g_{v_i}\,d\mu\le C\Vert Du\Vert(\Om),
\]
and
\[
(1-\widetilde{\gamma})u^{\wedge}(x)+\widetilde{\gamma}u^{\vee}(x)\le 
\liminf_{i\to\infty}v_i(x)\le \limsup_{i\to\infty}v_i(x)\le 
\widetilde{\gamma}u^{\wedge}(x)+(1-\widetilde{\gamma})u^{\vee}(x)
\]
for $\mathcal H$-a.e. $x\in X$.
Here $C$ and $0<\widetilde{\gamma}\le 1/2$ are constants that depend only on the
doubling constant of the measure and the constants in the Poincar\'e inequality.
The functions $v_i$ can be taken to be discrete convolution approximations of $u$;
this is a natural approximation method but a drawback is that
in the jump set one does not obtain pointwise convergence, but
rather just the lower and and upper bounds given above.

We can now give a similar result where we do obtain pointwise convergence
$\mathcal H$-almost everywhere also in the jump set.

\begin{corollary}
Let $\Om\subset X$ be open with $\mu(\Om)<\infty$
and let $u\in\BV(\Om)$. Then there exists a sequence
$(v_i)\subset \liploc(\Om)$ such that
$v_i\to u$ in $L^1(\Om)$,
\[
\limsup_{i\to\infty}\int_{\Om}g_{v_i}\,d\mu\le \Vert Du\Vert(\Om)+C_a\Vert Du\Vert^j(\Om),
\]
and $v_i(x)\to u^{\vee}(x)$ for $\mathcal H$-a.e. $x\in\Om$.
\end{corollary}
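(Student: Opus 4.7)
The plan is to combine Theorem \ref{thm:approximation from above intro} with the density of locally Lipschitz functions in $N^{1,1}(\Om)$, using Lemma \ref{lem:norm to uniform convergence} and the equivalence \eqref{eq:null sets of Hausdorff measure and capacity} between vanishing $1$-capacity and vanishing codimension-one Hausdorff measure, to upgrade the Newton-Sobolev approximations from Theorem \ref{thm:approximation from above intro} to locally Lipschitz ones while preserving pointwise convergence at $\mathcal H$-a.e. point.

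First I apply Theorem \ref{thm:approximation from above intro} to obtain a sequence $(w_i)\subset N^{1,1}(\Om)$ satisfying $w_i\to u$ in $L^1(\Om)$, $\limsup_{i\to\infty}\int_{\Om}g_{w_i}\,d\mu\le \Vert Du\Vert(\Om)+C_a\Vert Du\Vert^j(\Om)$, and $w_i(x)\to u^{\vee}(x)$ for every $x\in\Om$. Fix an exhaustion $\Om_1\Subset\Om_2\Subset\cdots$ of $\Om$ by open sets. For each $i\in\N$, the standard density of locally Lipschitz functions in $N^{1,1}(\Om)$ on complete doubling $(1,1)$-Poincar\'e spaces (see e.g.\ \cite{BB}) provides a sequence $(\phi_{i,j})_{j}\subset\liploc(\Om)$ with $\phi_{i,j}\to w_i$ in $N^{1,1}(\Om)$ as $j\to\infty$.

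By Lemma \ref{lem:norm to uniform convergence} applied with $\Om'=\Om_i$ and $\eps=2^{-i}$, together with the triangle inequality for $\Vert\cdot\Vert_{L^1(\Om)}$ and $\int_\Om g_{\cdot}\,d\mu$, I may choose the index $j_i$ so large that there exists an open set $G_i\subset\Om$ with $\capa_1(G_i)<2^{-i}$, $|\phi_{i,j_i}-w_i|<1/i$ on $\Om_i\setminus G_i$, $\Vert\phi_{i,j_i}-w_i\Vert_{L^1(\Om)}<1/i$, and $\int_\Om g_{\phi_{i,j_i}}\,d\mu<\int_\Om g_{w_i}\,d\mu+1/i$. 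Setting $v_i:=\phi_{i,j_i}\in\liploc(\Om)$, the $L^1$-convergence $v_i\to u$ and the required $\limsup$-bound on $\int_{\Om} g_{v_i}\,d\mu$ follow immediately. For the pointwise statement, define
\[
A:=\bigcap_{k=1}^\infty\bigcup_{i\ge k}G_i.
\]
Countable subadditivity of $\capa_1$ gives $\capa_1(A)=0$, and then \eqref{eq:null sets of Hausdorff measure and capacity} yields $\mathcal H(A)=0$. If $x\in\Om\setminus A$, then $x\in\Om_i$ for all large $i$ and $x\notin G_i$ for all sufficiently large $i$, so $|v_i(x)-w_i(x)|<1/i$ eventually; combined with $w_i(x)\to u^{\vee}(x)$, this yields $v_i(x)\to u^{\vee}(x)$ for $\mathcal H$-a.e.\ $x\in\Om$.

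The only technical point is organising a single diagonal choice that simultaneously controls the $L^1$-error, the upper gradient integral, and the capacity of the exceptional set on which uniform closeness fails; this is routine once each individual ingredient is in place. The conceptual content of the corollary lies entirely in Theorem \ref{thm:approximation from above intro}, so the remaining work is merely to upgrade the Newton-Sobolev approximations to locally Lipschitz ones and to use \eqref{eq:null sets of Hausdorff measure and capacity} to trade exceptional sets of small capacity for $\mathcal H$-null exceptional sets.
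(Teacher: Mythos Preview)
Your proof is correct and follows essentially the same approach as the paper's: both apply Theorem \ref{thm:approximation from above intro}, then use density of $\liploc(\Om)$ in $N^{1,1}(\Om)$ together with Lemma \ref{lem:norm to uniform convergence} on an exhaustion $\Om_1\Subset\Om_2\Subset\cdots$, and finally convert the capacity-null exceptional set to an $\mathcal H$-null one via \eqref{eq:null sets of Hausdorff measure and capacity}. Your diagonal argument with $A=\bigcap_k\bigcup_{i\ge k}G_i$ is a slightly tidier packaging of the paper's $\eps$-argument with $\bigcup_{j\ge N}G_j$, but the content is identical.
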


Here the constant $C_a$ is the same as in Theorem \ref{thm:approximation from above intro}.
The analogous fact naturally holds for $u^{\vee}$ replaced by $u^{\wedge}$.

\begin{proof}
By Theorem \ref{thm:approximation from above intro} we find a sequence
$(w_i)\subset N^{1,1}(\Om)$ such that
$w_i\to u$ in $L^1(\Om)$,
\[
\limsup_{i\to\infty}\int_{\Om}g_{w_i}\,d\mu\le \Vert Du\Vert(\Om)+C_a\Vert Du\Vert^j(\Om),
\]
and $w_i(x)\to u^{\vee}(x)$ for every $x\in\Om$.
By \cite[Theorem 5.47]{BB},
for each $i\in\N$ we find $v_i\in\liploc(\Om)$
such that $\Vert v_i-w_i\Vert_{N^{1,1}(\Om)}<1/i$.
Thus $v_i\to u$ in $L^1(\Om)$ and
\[
\limsup_{i\to\infty}\int_{\Om}g_{v_i}\,d\mu
=\limsup_{i\to\infty}\int_{\Om}g_{w_i}\,d\mu
\le \Vert Du\Vert(\Om)+C_a\Vert Du\Vert^j(\Om).
\]
Take open sets $\Om_1\Subset \Om_2\Subset \ldots \Subset \Om$ with
$\bigcup_{i=1}^{\infty}\Om_i=\Om$.
By
Lemma \ref{lem:norm to uniform convergence} we can assume that
for each $i\in\N$ there is a set $G_i\subset \Om$ such that
$\capa_1(G_i)<2^{-i}$ and $|v_i-w_i|< 1/i$ in $\Om_i\setminus G_i$.
Let $\eps>0$. For sufficiently large $N\in\N$ we have
$\capa_1\left(\bigcup_{j=N}^{\infty}G_j\right)<\eps$, and for every
$x\in \Om\setminus \bigcup_{j=N}^{\infty}G_j$ we have for all $i\ge N$
large enough that $x\in \Om_i$,
\[
|v_i(x)-u^{\vee}(x)|\le |v_i(x)-w_i(x)|+|w_i(x)-u^{\vee}(x)|< 1/i+|w_i(x)-u^{\vee}(x)|\to 0
\]
as $i\to\infty$. Since $\eps>0$ was arbitrary, we have
$v_i(x)\to u^{\vee}(x)$ for $1$-q.e. $x\in\Om$ and then by
\eqref{eq:null sets of Hausdorff measure and capacity}, $\mathcal H$-a.e. $x\in\Om$.
\end{proof}

\noindent Address:\\

\noindent Institut f\"ur Mathematik\\
Universit\"at Augsburg\\
Universit\"atsstr. 14\\
86159 Augsburg, Germany\\
E-mail: {\tt panu.lahti@math.uni-augsburg.de}

\end{document}